\documentclass[11pt,a4paper]{amsart}		

\usepackage{hyperref}
\usepackage{xcolor}
\hypersetup{
    colorlinks,
    linkcolor={red!50!black},
    citecolor={blue!50!black},
    urlcolor={blue!80!black}
}

\usepackage[utf8]{inputenc}					

\usepackage{lmodern}						

\usepackage{amsmath,amsfonts,amssymb,amsthm,pifont}
\usepackage{amsbsy,amscd,mathrsfs,graphicx}
\usepackage{hyperref}

\usepackage{pdflscape}
\usepackage{float}
\usepackage{array}

\usepackage{tikz}
\usetikzlibrary{shapes,arrows}
	
\usepackage[left=3cm,right=3cm,top=3cm,bottom=3cm]{geometry}

\usepackage[makeroom]{cancel}				
\usepackage{mathtools}						
\mathtoolsset{showonlyrefs,showmanualtags}	

\theoremstyle{plain}
\newtheorem{theorem}{Theorem}
\newtheorem{prop}[theorem]{Proposition}
\newtheorem{cor}[theorem]{Corollary}
\newtheorem{lemma}[theorem]{Lemma}
\newtheorem*{lemma*}{Lemma}
\theoremstyle{definition}
\newtheorem{definition}[theorem]{Definition}
\theoremstyle{remark}
\newtheorem{rmk}{Remark}
\newtheorem{example}{Example}

\DeclareMathOperator{\spn}{\mathrm{span}}
\DeclareMathOperator{\rank}{\mathrm{rank}}
\DeclareMathOperator{\tr}{\mathrm{Tr}}
\DeclareMathOperator{\trace}{\mathrm{Tr}}
\let\div\relax\DeclareMathOperator{\div}{\mathrm{div}}
\DeclareMathOperator{\dive}{\mathrm{div}}
\DeclareMathOperator{\grad}{\mathrm{grad}}

\newcommand{\R}{\mathbb{R}}						
\newcommand{\distr}{\blacktriangle}				
\newcommand{\metr}{\mathbf{g}}					
\newcommand{\g}{\mathbf{g}}						

\newcommand{\mS}{\mathbb{S}}

\newcommand{\dt}{\delta}



\newcommand{\bqn}{\begin{eqnarray}}
\newcommand{\eqn}{\end{eqnarray}}
\newcommand{\bi}{\begin{itemize}}

\newcommand{\ei}{\end{itemize}}
\newcommand{\bc}{\begin{cor}}
\newcommand{\ec}{\end{cor}}
\newcommand{\bp}{\begin{prop}}
\newcommand{\ep}{\end{prop}}
\newcommand{\bt}{\begin{theorem}}
\newcommand{\et}{\end{theorem}}


\newcommand{\eps}{\varepsilon}

\newcommand{\cyl}{
\begin{tikzpicture}
\tikzset{every node/.style={cylinder, shape border rotate=90, draw}}
\node [minimum height=9pt, minimum width = 9pt, aspect =.3, scale=0.8, very thin] at (0,0) {};
\end{tikzpicture}
}
\newcommand{\cylsmall}{
\begin{tikzpicture}
\tikzset{every node/.style={cylinder, shape border rotate=90, draw}}
\node [minimum height=9pt, minimum width = 9pt, aspect =.3, scale=0.6, very thin] at (0,0) {};
\end{tikzpicture}
}



\newcommand{\ip}[2]{\left\langle#1,#2\right\rangle}
\newcommand{\lp}{\left(}
\newcommand{\rp}{\right)}
\newcommand{\lb}{\left[}
\newcommand{\rb}{\right]}

\newcommand{\E}{\mathbb{E}}
\newcommand{\bR}{\mathbb{R}}

\DeclareMathOperator{\sinc}{\mathrm{sinc}}						

\title[Intrinsic random walks via volume sampling]{Intrinsic random walks in Riemannian and sub-Riemannian geometry via volume sampling}

\author{Andrei Agrachev}
\author{Ugo Boscain}
\author{Robert Neel}
\author{Luca Rizzi}

\address[Andrei Agrachev]{SISSA, Trieste \& Steklov Math. Inst., Moscow.}
\email{\href{mailto:agrachev@sissa.it}{agrachev@sissa.it}}

\address[Ugo Boscain]{Inria, team GECO \& CMAP, \'Ecole Polytechnique, CNRS, Universit\'e Paris-Saclay, Palaiseau, France}
\email{\href{mailto:ugo.boscain@polytechnique.edu}{ugo.boscain@polytechnique.edu}}

\address[Robert Neel]{Department of Mathematics, Lehigh University, Bethlehem, PA, USA}
\email{\href{mailto:robert.neel@lehigh.edu}{robert.neel@lehigh.edu}}

\address[Luca Rizzi]{Univ. Grenoble Alpes, CNRS, Institut Fourier, F-38000 Grenoble, France}
\email{\href{mailto:luca.rizzi@univ-grenoble-alpes.fr}{luca.rizzi@univ-grenoble-alpes.fr}}

\date{\today}

\subjclass[2010]{53C17, 60J65, 58J65}

\begin{document}

\begin{abstract}
We relate some constructions of stochastic analysis to differential geometry, via random walk approximations. We consider walks on both Riemannian and sub-Riemannian manifolds in which the steps consist of travel along either geodesics or integral curves associated to orthonormal frames, and we give particular attention to walks where the choice of step is influenced by a volume on the manifold. A primary motivation is to explore how one can pass, in the parabolic scaling limit, from geodesics, orthonormal frames, and/or volumes to diffusions, and hence their infinitesimal generators, on sub-Riemannian manifolds, which is interesting in light of the fact that there is no completely canonical notion of sub-Laplacian on a general sub-Riemannian manifold. However, even in the Riemannian case, this random walk approach illuminates the geometric significance of Ito and Stratonovich stochastic differential equations as well as the role played by the volume.
\end{abstract}

\maketitle

\tableofcontents

\section{Introduction}

Consider a Riemannian or a sub-Riemannian manifold $M$ and assume that $\{X_1,\ldots,X_k\}$ is a global orthonormal frame. It is well known that, under mild hypotheses, the solution $q_t$ to the stochastic differential equation in Stratonovich sense
\begin{equation}
dq_t=\sum_{i=1}^k X_i(q_t) \circ \lp \sqrt{2}\, dw^i_t\rp
\end{equation}
produces a solution to the heat-like equation
\begin{equation} \label{eq-strat}
\partial_t\varphi=\sum_{i=1}^k X_i^2 \varphi
\end{equation}
by taking $\varphi_t(q) = \E\lb \varphi_0(q_t)| q_0=q\rb$, where $\varphi_0$ gives the initial condition. (Here the driving processes $w^i_t$ are independent real Brownian motions, and $\sqrt{2}$ factor is there so that the resulting sum-of-squares operator doesn't need a $1/2$, consistent with the convention favored by analysts.) One can interpret \eqref{eq-strat} as the equation satisfied by a random walk with parabolic scaling  following the integral curves of the vector fields $X_1,\ldots X_m$, when the step of the walk tends to zero. This construction is very general (works in Riemannian and in the sub-Riemannian case) and does not use any notion of volume on the manifold.\footnote{In the Riemannian case avoiding the use of a volume is not crucial since an intrinsic volume (the Riemannian one) can always be defined. But in the sub-Riemannian case, how to define an intrinsic volume is a subtle question, as discussed below.}

However the operator $\sum_{i=1}^k X_i^2$ is not completely satisfactory to describe a diffusion process for the following reasons:

\begin{itemize}
\item the construction works only if a global orthonormal frame $X_1,\ldots,X_k$ exists;
\item it is not intrinsic in the sense that it depends on the choice of orthonormal frame;
\item it is not essentially self-adjoint w.r.t. a natural volume and one cannot guarantee a priori a ``good'' evolution in $L^2$ (existence and uniqueness of a contraction semigroup, etc...).
\end{itemize}
In the Riemannian context a heat operator that is globally well defined, frame independent and essentially self-adjoint w.r.t.\ the Riemannian volume (at least under the hypotheses of completeness) is the Laplace-Beltrami operator $\Delta=\div\circ\grad$. The heat equation
\begin{equation}
\partial_t \varphi=\Delta \varphi
\end{equation}
has an associated diffusion, namely Brownian motion (with a time change by a factor of 2), given by the solution of the stochastic differential equation
\begin{equation}
dq_t=\sum_{i=1}^k X_i(q_t) \lp\sqrt{2}\, dw^i_t\rp \qquad \text{(in this case $k=n$ is equal to the dimension of $M$)}
\end{equation}
in the Ito sense (for instance using the Bismut construction on the bundle of orthonormal frames \cite{bismut} or Emery's approach \cite{emery}).
Also, this equation can be interpreted  as the equation satisfied by the limit of a random walk that,  instead of integral curves, follows geodesics.  The geodesics starting from a point are weighted with a uniform probability given by the Riemannian metric on the tangent space at the point.

The purpose of this paper is to extend this more invariant construction of random walks to the sub-Riemannian context, to obtain a definition of an intrinsic Laplacian in sub-Riemannian geometry and to compare it with the divergence of the horizontal gradient.

The task of determining the appropriate random walk is not obvious for several reasons. First, in sub-Riemannian geometry geodesics starting from a given point are always parameterized by a non-compact subset of the cotangent space at the point, on which there is no canonical ``uniform'' probability measure. Second, in sub-Riemannian geometry for every $\eps$ there exist geodesics of length $\eps$ that have already lost their optimality, and one has to choose between a construction involving all geodesics (including the non-optimal ones) or only those that are optimal. Third, one should decide what to do with abnormal extremals.  Finally, there is the problem of defining an intrinsic volume in sub-Riemannian geometry, to compute the divergence.

It is not the first time that this problem has been attacked. In \cite{GordLae,OurUrPaper,Grong1,Grong2}, the authors compare the divergence of the gradient with the Laplacian corresponding to a random walk induced by a splitting of the cotangent bundle (see \cite[Section 1.4]{OurUrPaper} for a detailed summary of this literature). In this paper we take another approach, trying to induce a measure on the space of geodesics from the ambient space by ``sampling'' the volume at a point a fraction $c$ of the way along the geodesic, see Section~\ref{s:riem-RW}. In a broader context, discrete approximations to diffusions have a long history, with, for example, Wong-Zakai approximations being widespread. The present paper follows in a related tradition, going back to \cite{PinksyRiem}, of developing geometrically meaningful approximations to diffusions on Riemannian or sub-Riemannian manifolds, in part with the aim of elucidating the connection between the diffusion and more fundamental geometric features of the manifold and/or the dynamics of which the diffusion is an idealization. This direction has seen a fair amount of activity lately; besides the papers on random walks arising from splittings in sub-Riemannian geometry listed above, we mention the kinetic Brownian motion of \cite{IsmaelKinetic} (which gives a family of $C^1$ approximations to Riemannian Brownian motion with random velocity vector) and the homogenization of perturbations of the geodesic flow discussed in \cite{XueMei} (which also gives an approximation to Riemannian Brownian motion).

This idea works very well in the Riemannian case, permitting a random walk interpretation of the divergence of the gradient also when the divergence is computed w.r.t.\ an arbitrary volume. More precisely, the limiting diffusion is generated by the divergence of the gradient if and only if at least one of the two conditions are satisfied: (i) one is using the Riemannian volume; (ii) the parameter $c$ used to realize the ``volume sampling'' is equal to $1/2$, evoking reminiscences of the Stratonovich integral. From these results one also recognizes a particular role played by the Riemannian volume (see Section~\ref{s:riem-RW} and Corollary~\ref{c:self-adj-Riem}). (In this Riemannian case, $c=0$ corresponds to no sampling of the volume, and the limiting diffusion is just Brownian motion as above.)

In the sub-Riemannian case the picture appears richer and more complicated. Even for contact Carnot groups (see Section~\ref{s:SR-RW}) the volume sampling procedure is non-trivial, as one requires an explicit knowledge of the exponential map. For Heisenberg groups, one gets a result essential identical to the Riemannian case, i.e. that the limiting diffusion is generated by the divergence of the horizontal gradient if and only if at least one of the following is satisfied: (i) one is using the Popp volume; (ii) the parameter $c=1/2$. For general contact Carnot groups, the results are more surprising, since the generator of the limiting diffusion is not the expected divergence of the horizontal gradient (even the second-order terms are not the expected ones); however, the generator will be the divergence of the horizontal gradient with respect to a different metric on the same distribution, as shown in Section \ref{Sect:IntrinsicFormula}. Moreover, the result just described applies to two somewhat different notions of a geodesic random walk, one in which we walk along all geodesics, and one in which we walk only along minimizing ones. An important difference between these two approaches is that only the walk along minimizing geodesics gives a non-zero operator in the limit as the volume sampling parameter $c$ goes to 0 (see Section \ref{s:altconstr}). Moreover, this non-zero limiting operator turns out to be independent of the volume, so that it becomes a possible, if slightly unusual (the principle symbol is not the obvious one), candidate for a choice of intrinsic sub-Laplacian. This may be an interesting direction to explore.

Motivated by these unexpected results and difficulties in manipulating the exponential map in more general sub-Riemannian cases, in Section \ref{s:RW-flow} we try another construction in the general contact case (that we call the flow random walk with volume sampling), inspired by the classical Stratonovich integration and also including a volume sampling procedure. This construction, a priori not intrinsic (it depends on a choice of vector fields), gives rise in the limit to an intrinsic operator showing the particular role played by the Popp volume. This construction also gives some interesting hints in the Riemannian case;
unfortunately it cannot be easily generalized to situations of higher step or corank.

On the stochastic side, in Section~\ref{s:convergence}, we introduce a general scheme for the convergence of random walks of a sufficiently general class to include all our constructions, based on the results of \cite{OurUrPaper}. Further, in the process of developing the random walks just described, we naturally obtain an intuitively appealing description of the solution to a Stratonovich SDE on a manifold as a randomized flow along the vector fields $V_1,\ldots,V_k$ (which determine the integrand) while the solution to an Ito SDE is a randomized geodesic tangent to the vector fields $V_1,\ldots,V_k$ (as already outlined above for an orthonormal frame). This difference corresponds to the infinitesimal generator being based on second Lie derivatives versus second covariant derivatives. Of course, such an approximation procedure by random walks yields nothing about the diffusions solving these SDEs that is not contained in standard stochastic calculus, but the explicit connection to important geometric objects seems compelling and something that has not been succinctly described before, to the best of our knowledge. Further, it is then natural to round out this perspective on the basic objects of stochastic calculus on manifolds by highlighting the way in which the volume sampling procedure can be viewed as a random walk approximation of the Girsanov change of measure, at least in the Riemannian case (see Appendix~\ref{a:Girsanov}).

For the benefit of exposition, the proofs are collected in Section~\ref{s:proofs}. For the reader's convenience, we collect the results for different structures in Table~\ref{t:table1}; see the appropriate sections for more details and explanation of the notation.

\begin{landscape}

\begin{table}[H]
\centering
\setlength\extrarowheight{1pt}	
\begin{tabular}{|m{23mm}||m{81mm}|m{86mm}|} \hline
\multicolumn{1}{|c||}{Structure} & \multicolumn{1}{c|}{ \textbf{Geodesic RW with volume sampling}} & \multicolumn{1}{c|}{ \textbf{Flow RW with volume sampling}} \\ \hline
 Riemannian &  \begin{tabular}{l} $L_{\omega,c} = \Delta_\omega + (2c-1)\grad(h) \phantom{\sum_{i=1}^n}$ \\[.2cm]  $c=\tfrac{1}{2}$ or $h = \text{const}$ : $L_{\omega,c} = \Delta_\omega$ \\[.2cm] $c=0$ :  $L_{\omega,0} =   \displaystyle \lim_{c \to 0} L_{\omega,c} = \Delta_{\mathcal{R}}$ \\[.2cm]  (see Theorem~\ref{t:limit-Riemannian})\end{tabular} & \begin{tabular}{l} $L_{\omega,c} = \Delta_\omega + c \grad(h) + (c-1)   \sum_{i=1}^{n} \dive_\omega(X_i) X_i$ \\[.2cm] $c=1$ and $h = \text{const}$ : $L_{\omega,1} = \Delta_\omega$ \\[.2cm] $c= 0$ : $L_{\omega,0} =   {\displaystyle \lim_{c \to 0} L_{\omega,c}} = \sum_{i=1}^{n} X_i^2$ \\[.2cm] (see Theorem~\ref{t:limit-Riemannian-fake}) \end{tabular} \\[.2cm] \hline 
 
Heisenberg group $\mathbb{H}_{2d+1}$ & \begin{tabular}{l} $	L_{\omega,c} = \sigma(c) \left(\Delta_{\omega}+ (2c-1) \grad(h) \right) \phantom{\sum_{i=1}^n} \phantom{\sum_{i=1}^n}$ \\[.2cm] $c=\tfrac{1}{2}$ or $h =\text{const}$ : $L_{\omega,c} = \sigma(c) \Delta_\omega$ \\[.2cm] $c \to 0$ : $  \displaystyle\lim_{c \to 0} L_{\omega,c} = 0$ \quad ($\star$) \\[.2cm] (see Theorem~\ref{t:limit-contact-Heis}) \end{tabular} & \begin{center}\begin{tabular}{c} (see below) \end{tabular}\end{center} \\[.2cm] \hline

Contact Carnot group & \begin{tabular}{l} $L_{\omega,c} = \div_{\omega}\circ\grad'+ (2c-1) \grad'(h) \phantom{\sum_{i=1}^n}$ \\[.2cm] $c=\tfrac{1}{2}$ or $h =\text{const}$ : $L_{\omega,c} = \div_{\omega} \circ \grad'$ \\[.2cm] $c \to 0$ : $ \displaystyle \lim_{c \to 0} L_{\omega,c} = 0$ \quad ($\star$) \\[.2cm] (see Theorem~\ref{t:limit-contact-carnot} and Corollary~\ref{cor:intrinsicformula}) \end{tabular} &  \begin{center}\begin{tabular}{c} (see below) \end{tabular}\end{center} \\[.2cm] \hline

General contact & \begin{center}\begin{tabular}{c} Open problem \end{tabular}\end{center} & \begin{tabular}{l} $ L_{\omega,c} = \Delta_\omega + c \grad(h)+ (c-1)  \sum_{i=1}^{k} \dive_\omega(X_i) X_i$ \\[.2cm] $c=1$ and $h = \text{const}$ : $L_{\omega,1} = \Delta_\omega$ \\[.2cm] $c= 0$ : $L_{\omega,0} =   { \displaystyle \lim_{c \to 0} L_{\omega,c}} = \sum_{i=1}^{k} X_i^2$ \\[.2cm] (see Theorem~\ref{t:limit-contact-fake}) \end{tabular} \\[.2cm] \hline
\end{tabular}
\caption{In each cell, $L_{\omega,c}$ is the generator of the limit diffusion associated with the corresponding construction. Here $c \in [0,1]$ is the ratio of the volume sampling, $n= \dim M$ and $k = \rank \distr$. (i) In the Riemannian case $\omega = e^h \mathcal{R}$, where $\mathcal{R}$ is the Riemannian volume. (ii) In the sub-Riemannian case $\omega = e^h \mathcal{P}$, where $\mathcal{P}$ is Popp volume. (iii) Recall that $\Delta_\omega = \dive_\omega \circ \grad$, and is essentially self-adjoint in $L^2(M,\omega)$ if $M$ is complete. (iv) $X_1,\ldots,X_k$ is a local orthonormal frame ($k=n$ in the Riemannian case). (v) For the definition of the constant $\sigma(c)$, see the appropriate theorem. (vi) $\grad'$ is the gradient computed w.r.t.\ to a modified sub-Riemannian structure on the same distribution (see Section~\ref{Sect:IntrinsicFormula}). (vii) The case of $\mathbb{H}_{2d+1}$ is a particular case of contact Carnot groups, where $\grad' = \sigma(c) \grad$. ($\star$) See Section~\ref{s:altconstr} for an alternative construction where one walks only along minimizing geodesics and which, in the limit for $c \to 0$, gives a non-zero operator.} \label{t:table1}
\end{table}

\end{landscape}

\section{Convergence of random walks}\label{s:convergence}

We recall some preliminaries in sub-Riemannian geometry (see \cite{nostrolibro}, but also \cite{montgomerybook,riffordbook,Jea-2014}). 
\begin{definition}
A \emph{(sub-)Riemannian manifold} is a triple $(M,\distr,\metr)$ where $M$ is smooth, connected manifold, $\distr \subset TM$ is a vector distribution of constant rank $k \leq n$ and $\g$ is a smooth scalar product on $\distr$. We assume that $\distr$ satisfies the \emph{H\"ormander's condition}
\begin{equation}
\spn \{ [X_{i_1},[X_{i_2},[\ldots,[X_{i_{m-1}},X_{i_m}]]] \mid m\geq 0, \quad X_{i_\ell} \in \Gamma(\distr)\}_q  = T_q M, \qquad \forall q \in M.
\end{equation}
\end{definition}
By the Chow-Rashevskii theorem, any two points in $M$ can be joined by a Lipschitz continuous curve whose velocity is a.e.\ in $\distr$. We call such curves \emph{horizontal}. Horizontal curves $\gamma : I \to M$ have a well-defined length, given by
\begin{equation}
\ell(\gamma) = \int_I \|\gamma(t)\| dt,
\end{equation}
where $\| \cdot \|$ is the norm induced by $\g$. The \emph{sub-Riemannian distance} between $p,q \in M$ is
\begin{equation}
d(p,q) = \inf\{ \ell(\gamma) \mid \gamma \text{ horizontal curve connecting $q$ with $p$} \}.
\end{equation}
This distance turns $(M,\distr,\metr)$ into a metric space that has the same topology of $M$. A sub-Riemannian manifold is \emph{complete} if $(M,d)$ is complete as a metric space. In the following, unless stated otherwise, we always assume that (sub-)Riemannian structures under consideration are complete.

(Sub-)Riemannian structures include Riemannian ones, when $k=n$. We use the term ``sub-Riemannian'' to denote structures that are not Riemannian, i.e.\ $k < n$.

\begin{definition}\label{Def:PathSpace}
If $M$ is a (sub-)Riemannian manifold, (following the basic construction of Stroock and Varadhan \cite{SAndV}) let $\Omega(M)$ be the space of continuous paths from $[0,\infty)$ to $M$. If $\gamma\in\Omega(M)$ (with $\gamma(t)$ giving the position of the path at time $t$), then the metric on $M$ induces a metric $d_{\Omega(M)}$ on $\Omega(M)$ by
\[
d_{\Omega(M)}\lp \gamma^1,\gamma^2\rp= \sum_{i=1}^{\infty}\frac{1}{2^i}
\frac{\sup_{0\leq t\leq i}d\lp \gamma^1(t),\gamma^2(t)\rp}{1+\sup_{0\leq t\leq i}d\lp \gamma^1(t),\gamma^2(t)\rp} 
\]
making $\Omega(M)$ into a Polish space. We give $\Omega(M)$ its Borel $\sigma$-algebra. We are primarily interested in the weak convergence of probability measures on $\Omega(M)$.
\end{definition}

A choice of probability measure $P$ on $\Omega(M)$ determines a continuous, random process on $M$, and (in this section) we will generally denote the random position of the path at time $t$ by $q_t$. Moreover, we will use the measure $P$ and the process $q_t$ interchangeably.

We are interested in what one might call bounded-step-size, parabolically-scaled families of random walks, which for simplicity in what follows, we will just call a family of random walks. We will index our families by a ``spatial parameter'' $\eps>0$ (this will be clearer below), and we let $\dt=\eps^2/(2k)$ be the corresponding time step ($k$ is the rank of $\distr$).

\begin{definition}\label{Def:Walk}
A family of random walks on a (sub-)Riemannian manifold $M$, indexed by $\eps>0$ and starting from $q\in M$, is a family of probability measures $P^\eps_q$ on $\Omega(M)$ with $P^{\eps}_q(q^{\eps}_0=q)=1$ and having the following property. For every $\eps$, and every $\tilde{q}\in M$, there exists a probability measure $\Pi_{\tilde{q}}^{\eps}$ on continuous paths $\gamma:[0,\dt]\rightarrow M$ with $\gamma(0)=\tilde{q}$ such that for every $m=0,1,2,\ldots$, the distribution of $q^{\eps}_{[m\dt,(m+1)\dt]}$ under $P^{\eps}_q$ is given by $\Pi_{q^{\eps}_{m\dt}}^{\eps}$, independently of the position of the path $q^{\eps}_t$ prior to time $m\dt$. Further,  there exists some constant $\kappa$, independent of $\tilde{q}$ and $\eps$, such that the length of $\gamma_{[0,\dt]}$ is almost surely less than or equal to $\kappa\eps$ under $\Pi_{\tilde{q}}^{\eps}$. (So the position of the path at times $m\dt$ for $m=0,1,2,\ldots$ is a Markov chain, starting from $q$, with transition probabilities $P_q^{\eps}\lp q^{\eps}_{(m+1)\dt}\in A\mid q^{\eps}_{m\dt}=\tilde{q} \rp = \Pi_{\tilde{q}}^{\eps}\lp \gamma_{\dt}\in A\rp$ for any Borel $A\subset M$.
\end{definition}

\begin{rmk}\label{r:remfordefinition}
In what follows $\Pi_{\tilde{q}}^{\eps}$ will, in most cases, be supported on paths of length exactly $\eps$ (allowing us to take $\kappa=1$). For example, on a Riemannian manifold, one might choose a direction at $q^{\eps}_{m\dt}$ at random and then follow a geodesic in this direction for length $\eps$ (and in time $\dt$). Alternatively, on a Riemannian manifold with a global orthonormal frame, one might choose a random linear combination of the vectors in the frame, still having length 1, and then flow along this vector field for length $\eps$. In both of these cases, $\Pi_{\tilde{q}}^{\eps}$ is itself built on a probability measure on the unit sphere in $T_{\tilde{q}}M$ according to a kind of scaling by $\eps$. These walks, and variations and sub-Riemannian versions thereof, form the bulk of what we consider, and should be sufficient to illuminate the definition.

While the introduction of the ``next step'' measure $\Pi^{\eps}_{\tilde{q}}$ is suitable for the general definition and accompanying convergence result, it is overkill for the geometrically natural steps that we consider. Instead, we will describe the steps of our random walks in simpler geometric terms (as in the case of choosing a random geodesic segment of length $\eps$ just mentioned), and leave the specification of $\Pi^{\eps}_{\tilde{q}}$ implicit, though in a straightforward way.
\end{rmk}

\begin{rmk}
All of the random walks we consider will be horizontal, in the sense that $\Pi_{\tilde{q}}^{\eps}$ is supported on horizontal curves. (In the Riemannian case, this, of course, is vacuous.) So while the diffusions we will get below as limits of such random walks will not be horizontal insofar as they are supported on paths that are not smooth enough to satisfy the definition of horizontal given above, they nonetheless are limits of horizontal processes.
\end{rmk}

We note that, for some constructions like that of solutions to a Stratonovich SDE, there need not be a metric on $M$, but instead a smooth structure is sufficient. Unfortunately, the machinery of convergence of random walks in Theorem \ref{t:convergence} below is formulated in terms of metrics, and thus we will generally proceed by choosing some (Riemannian or sub-Riemannian) metric on $M$ when desired. However, note that if $M$ is compact, any two Riemannian metrics induce Lipschitz-equivalent distances on $M$, and thus the induced distances on $\Omega(M)$ are comparable. This means that the resulting topologies on $\Omega(M)$ are the same, and thus statements about the convergence of probability measures on $\Omega(M)$ (which is how we formalize the convergence of random walks) do not depend on what metric on $M$ is chosen. This suggests that a more general framework could be developed, avoiding the need to introduce a metric on $M$ when the smooth structure should suffice, but such an approach will not be pursued here.

\begin{definition}\label{d:operator}
Let $\eps>0$. To the family of random walks $q_t^\eps$ (in the sense of Definition \ref{Def:Walk}, and with the above notation), we associate the family of smooth operators on $C^\infty(M)$
\begin{equation}
(L^\eps\phi)(q) := \frac{1}{\dt}\mathbb{E}[\phi(q_{\dt}^\eps) - \phi(q)\mid q_0^\eps =q], \qquad \forall q \in M.
\end{equation}
\end{definition}

\begin{definition}
Let $L$ be a differential operator on $M$. We say that a family $L^\varepsilon$ of differential operators converge to $L$ if for any $\phi \in C^\infty(M)$ we have $L^\varepsilon \phi \to L \phi$ uniformly on compact sets. In this case, we write $L^\varepsilon \to L$.
\end{definition}

Let $L$ be a smooth second-order differential operator with no zeroth-order term. If the principal symbol of $L$ is also non-negative definite, then there is a unique diffusion associated to $L$ starting from any point $q\in M$, at least up until a possible explosion time. However, since our analysis in fundamentally local, we will assume that the diffusion does not explode. In that case, this diffusion is given by the measure $P_q$ on $\Omega(M)$ that solves the martingale problem for $L$, so that
\[
\phi(q_t) - \int_0^t L\phi(q_s)\, ds
\]
is a martingale under $P_q$ for any smooth, compactly supported $\phi$, and $P_q\lp q_0=q\rp=1$.

\begin{theorem}\label{t:convergence}
Let $M$ be a (sub-)Riemannian manifold, let $P^\eps_q$ be the probability measures on $\Omega(M)$ corresponding to a sequence of random walks $q_t^\eps$ (in the sense of Definition \ref{Def:Walk}), with $q^\eps_0  =q$, and let $L^\eps$ be the associated family of operators. Suppose that $L^\eps \to L^0$ (in the sense of Definition \ref{d:operator}), where $L^0$ is a smooth second-order operator with non-negative definite principal symbol and without zeroth-order term. Further, suppose that the diffusion generated by $L$, which we call $q_t^0$, does not explode, and let $P_q^0$ be the corresponding probability measure on $\Omega(M)$ starting from $q$. Then $P^\eps_q \to P_q^0$ as $\eps \to 0$, in the sense of weak convergence of probability measures (see Definition~\ref{Def:PathSpace}).
\end{theorem}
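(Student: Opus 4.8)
The plan is to run the classical Stroock--Varadhan argument \cite{SAndV} (in the form used in \cite{OurUrPaper}): first establish tightness of the family $\{P^\eps_q\}$ on $\Omega(M)$, then show that every subsequential weak limit solves the martingale problem for $L^0$; since $L^0$ has non-negative definite principal symbol, no zeroth-order term, and generates a non-exploding diffusion, that martingale problem is well posed, so any limit is forced to equal $P^0_q$ and the whole family converges. Two features of Definition~\ref{Def:Walk} drive everything: the parabolic scaling $\dt=\eps^2/(2k)$ together with the a.s.\ bound $\ell(\gamma_{[0,\dt]})\le\kappa\eps$, which says that in one time step the walk moves a metric distance of order $\sqrt{\dt}$ (Brownian scaling), and the one-step Markov property. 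Note at the outset that the a.s.\ bound on the length of each step, together with completeness of $M$, implies that over any finite time interval the interpolated path remains in a compact subset of $M$; hence each $P^\eps_q$ is a genuine (non-exploding) probability measure on $\Omega(M)$, and explosion is only a concern for the limit.

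For the estimates, fix $\phi\in C^\infty(M)$. Definition~\ref{d:operator} gives $\E[\phi(q^\eps_\dt)-\phi(q)\mid q^\eps_0=q]=\dt\,L^\eps\phi(q)$, which is $O(\dt)$ on compacts, uniformly in $\eps$, since $L^\eps\phi\to L^0\phi$ uniformly on compacts. Applying the same identity to $\phi^2$ yields $\E[(\phi(q^\eps_\dt)-\phi(q))^2\mid q^\eps_0=q]=\dt\,\big(L^\eps(\phi^2)-2\phi\,L^\eps\phi\big)(q)$, whose right-hand side converges uniformly on compacts to $\dt$ times the carr\'e du champ of $L^0$ (non-negative, locally bounded), so second moments of increments are also $O(\dt)$. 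Finally the bound $\ell(\gamma_{[0,\dt]})\le\kappa\eps$ gives, for $\phi$ Lipschitz on a compact neighbourhood, $|\phi(q^\eps_\dt)-\phi(q^\eps_0)|\le C\kappa\eps=C'\sqrt{\dt}$ deterministically, controlling higher moments (e.g.\ the fourth moment is $O(\dt^2)$) and bounding by $O(\eps)$ the difference between the continuous interpolation and the embedded chain within one step. Because $M$ is non-compact these bounds are only uniform on compacts, so I would \emph{localize}: fix a proper $f\in C^\infty(M)$ with $f\ge 0$, let $\sigma^\eps_R$ be the first exit time of the walk from $\{f\le R\}$, and argue with the stopped walks $q^\eps_{t\wedge\sigma^\eps_R}$ (laws $P^\eps_{q,R}$), for which all the above are uniform in $\eps$ on the fixed compact $\{f\le R+\kappa\eps\}$ and compact containment is automatic. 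Feeding these into the standard tightness criterion on path space (Kolmogorov--Chentsov / Aldous) shows $\{P^\eps_{q,R}\}_\eps$ is tight for each $R$.

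The heart of the proof is identifying subsequential limits. For $\phi\in C_c^\infty(M)$, the Markov property and Definition~\ref{d:operator} make
\[
N^{\eps,\phi}_m := \phi(q^\eps_{m\dt}) - \sum_{j=0}^{m-1}\dt\, L^\eps\phi(q^\eps_{j\dt})
\]
a martingale for the walk's natural filtration (and likewise, stopped, for the stopped walk). Along a convergent subsequence $P^\eps_{q,R}\Rightarrow Q$, I would pass to the limit: using $L^\eps\phi\to L^0\phi$ uniformly on the compact carrying the stopped walk, the discrete sums are Riemann approximations converging to $\int_0^{t}L^0\phi(q_s)\,ds$; the $O(\eps)$ interpolation bound lets one replace the embedded chain by the continuous path and evaluate at arbitrary off-grid times $s<t$; and uniform integrability is immediate because $\phi$ and $L^0\phi$ are bounded with compact support and $L^\eps\phi$ is uniformly bounded on the relevant compact. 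Testing against $\prod_i h_i(q_{s_i})$ with $h_i\in C_b(M)$, $s_i\le s<t$, then shows $\phi(q_t)-\int_0^t L^0\phi(q_s)\,ds$ is a $Q$-martingale, i.e.\ $Q$ solves the martingale problem for $L^0$; one sidesteps the discontinuity of the exit functional $\sigma_R$ in the usual way, by restricting to test functions $\phi$ supported in $\{f<R\}$ (so the stopping is invisible) and using that ``$Q$ is concentrated on paths stopped on exiting $\{f\le R\}$'' is preserved under weak limits. Well-posedness of this localized martingale problem for $L^0$ then forces $Q=P^0_{q,R}$, the law of the $L^0$-diffusion stopped at $\sigma_R$; being subsequence-independent, $P^\eps_{q,R}\Rightarrow P^0_{q,R}$ for every $R$.

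Finally I would remove the localization. Non-explosion of $q^0_t$ gives $\sigma_R\uparrow\infty$ $P^0_q$-a.s., hence $P^0_{q,R}\Rightarrow P^0_q$ as $R\to\infty$; and for bounded continuous $F$ on $\Omega(M)$ depending only on $[0,T]$, the stopped and unstopped walks agree on $\{\sigma^\eps_R>T\}$, while Portmanteau applied to the closed event $\{\sup_{t\le T}f\ge R\}$ gives $\limsup_{\eps\to0}P^\eps_q(\sigma^\eps_R\le T)\le P^0_q(\sigma_R\le T)\to 0$. Combining $\E^\eps_{q,R}[F]\to\E^0_{q,R}[F]$, $\E^0_{q,R}[F]\to\E^0_q[F]$, and this uniform-in-small-$\eps$ tail bound yields $\E^\eps_q[F]\to\E^0_q[F]$, i.e.\ $P^\eps_q\Rightarrow P^0_q$. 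I expect the main obstacle to be exactly the non-compactness of $M$: it forces the localization scheme, and it is there that the hypothesis that $L^0$ \emph{generates a non-exploding diffusion} does real work, both to supply well-posedness of the (localized) martingale problem — which is what pins down the subsequential limits — and to let $R\to\infty$ at the end; the residual nuisances (off-grid times, continuity of the exit functionals, uniform integrability in the limit passage) are handled by the increment bound $O(\eps)$ and the boundedness of $\phi$, $L^0\phi$ in the standard way.
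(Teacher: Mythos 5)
Your proposal is mathematically sound, but it takes a very different route from the paper only in the sense that it does work the paper outsources: the paper's entire proof of Theorem~\ref{t:convergence} is a reduction to \cite[Thm.~70, Rmk.~26]{OurUrPaper}, and the only thing verified there is the single displacement hypothesis (Eq.~(19) of that reference), which follows immediately from the almost-sure bound $\ell(\gamma_{[0,\dt]})\le\kappa\eps$ on each step. What you have written out --- tightness from the parabolic scaling and the per-step length bound, the discrete martingale $N^{\eps,\phi}_m$, identification of subsequential limits with solutions of the martingale problem for $L^0$, localization by exit times to cope with non-compactness, and removal of the localization via non-explosion and a Portmanteau estimate --- is in substance the Stroock--Varadhan argument underlying the cited theorem, so the two proofs agree at the level of ideas; yours simply makes the paper self-contained and makes explicit where each hypothesis (uniform-on-compacts convergence $L^\eps\to L^0$, the $\kappa\eps$ bound, well-posedness and non-explosion for $L^0$) actually enters. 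A few of your steps are stated more loosely than they would need to be in a full write-up, though none is a genuine gap: tightness on $\Omega(M)$ requires a modulus-of-continuity bound on $d(q^\eps_s,q^\eps_t)$ itself rather than on $\phi(q^\eps_s)-\phi(q^\eps_t)$ for a single test function (fixed by a proper embedding or a suitable countable family of $\phi$'s); the fourth-moment bound over order $(t-s)/\dt$ steps needs the martingale decomposition of the increments, not just the per-step deterministic bound; and the Portmanteau estimate for the unstopped $P^\eps_q$ should be routed through the stopped walk at a higher level $R'>R+\kappa$, since weak convergence of the unstopped family is precisely what is being proved. All of these are standard repairs, and the overall architecture is exactly right.
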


\begin{proof}
The theorem is a special case of \cite[Thm. 70, Rmk. 26]{OurUrPaper}. First note that a random walk $q_t^\eps$ as described here corresponds to a random walk $X_{t}^h$ in the notation of \cite{OurUrPaper}, with $h = \eps^2/2k$, and with each step being given either by a continuous curve (which may or may not be a geodesic), as addressed in Remark 26. Every random walk in our class has the property that, during any step, the path never goes more than distance $\kappa\eps$ from the starting point of the step for some fixed $\kappa>0$, by construction, and this immediately shows that every random walk in our class satisfies Eq.~(19) of \cite{OurUrPaper}. 
Then all the assumptions of \cite[Thm. 70]{OurUrPaper} are satisfied and the conclusion follows, namely $P^\eps_q \to P_q^0$ as $\eps \to 0$.
\end{proof}


\section{Geodesic random walks in the Riemannian setting}\label{s:riem-RW}

\subsection{Ito SDEs via geodesic random walks}\label{s:ito-intro}

Let $(M,\g)$ be a Riemannian manifold. We consider a set of smooth vector fields, and since we are interested in local phenomena, we assume that the $V_i$ have bounded lengths and that $(M,\g)$ is complete. We now consider the Ito SDE \begin{equation}\label{Eqn:ItoSDE}
dq_t = \sum_{i=1}^{k} V_i\lp q_t\rp d(\sqrt{2} w_t^i) , \qquad q_0=q,
\end{equation}
for some $q\in M$, where $w_t^1,\ldots,w_t^k$ are independent, one-dimensional Brownian motions\footnote{One approach to interpreting and solving \eqref{Eqn:ItoSDE}, as well as verifying that $q_t$ will be a martingale, is via lifting it to the bundle of orthonormal frames; see the first two chapters of \cite{Hsu} for background on stochastic differential geometry, connection-martingales, and the bundle of orthonormal frames. Alternatively, \cite[Chapter 7]{emery} gives a treatment of Ito integration on manifolds.}. To construct a corresponding sequence of random walks, we choose a random vector $V=\beta_1V_1+\beta_2V_2+\cdots+\beta_kV_k$ by choosing $(\beta_1,\ldots,\beta_k)$ uniformly from the unit sphere. Then, we follow the geodesic $\gamma(s)$ determined by $\gamma(0)=q$ and $\gamma^{\prime}(0)=\frac{2k}{\eps}V$ for time $\dt=\eps^2/(2k)$. Equivalently, we travel a distance of $\eps |V|$ in the direction of $V$ (along a geodesic). This determines the first step, $q^{\eps}_t$ with $t\in[0,\dt]$, of a random walk (and thus, implicitly, the measure $\Pi^{\eps}_q$). Determining each additional step in the same way produces a family of piecewise geodesic random walks $q^{\eps}_t$, $t\in[0,\infty)$, which we call the \emph{geodesic random walk} at scale $\eps$ associated with the SDE \eqref{Eqn:ItoSDE} (in terms of Definition \ref{Def:Walk}, $\kappa=\sup_{q,(\beta_1,\ldots,\beta_k)} V$).

We now study the convergence of this family of walks as $\eps\rightarrow 0$. Let $x_1,\ldots,x_n$ be Riemannian normal coordinates around $q_0^\eps=q$, and write the random vector $V$ as
\begin{equation}\label{Eqn:VExpand}
V(x)= \sum_{m=1}^k \beta_m V_m(x) = \sum_{m=1}^k \beta_m \sum_{i=1}^n V_m^i \partial_{i}
+O(r) = \sum_{i=1}^n A^i \partial_{ i} + O(r),
\end{equation}
where $r = \sqrt{x_1^2+\ldots+x_n^2}$. In normal coordinates, Riemannian geodesics correspond to Euclidean geodesics, and thus $\gamma_V(t)$ has $i$-th coordinate $A^i t$. In particular, for any smooth function $\phi$ we have
\begin{equation}
\phi(\gamma_V(\eps)) - \phi(q) = \sum_{i=1}^n A^i (\partial_{i}\phi)(q) \eps + \frac{1}{2}\sum_{i,j=1}^n A^i A^j (\partial_i \partial_j \phi)(q) \eps^2 + O(\eps^3).
\end{equation}
Averaging w.r.t.\ the uniform probability measure on the sphere $\sum_{i=1}^k \beta_i^2 =1$, we obtain
\begin{equation}\label{Eqn:ItoConv}
\begin{split}
L^{\eps} :=  \frac{1}{\dt} \E\lb \phi\lp q^{\eps}_{\dt}\rp -\phi(q)\left| q^{\eps}_0=q \right.\rb & \rightarrow \sum_{m=1}^k  \sum_{i,j=1}^n V^i_m V^j_m (\partial_{i}\partial_{j}\phi)(q) \\
&= \sum_{m=1}^k \nabla^2_{V_m,V_m}(q) \qquad \text{as } \eps \to 0,
\end{split}\end{equation}
where $\nabla^2$ denotes the Hessian, with respect to Levi-Civita connection, and where we recall that $\sum_{j=1}^n V_m^j \partial_{j} = V_m(q)$ and the $x_i$ are a system of normal coordinates at $q$. The right-hand side of \eqref{Eqn:ItoConv} determines a second-order operator which is independent of the choice of normal coordinates (and thus depends only on the $V_i$). Moreover, this same construction works at any point, and thus we have a second-order operator $L = \lim_{\eps \to 0} L^\eps$ on all of $M$. Because the $V_i$ are smooth, so is $L$ (and the convergence is uniform on compacts).

We see that the martingale problem associated to $L$ has a unique solution (at least until explosion, but since we are interested in local questions, we can assume that there is no explosion). Further, this solution is the law of the process $q^0_t$ that solves \eqref{Eqn:ItoSDE}. If we again let $P^{\eps}$ and $P^0$ be the probability measures on $\Omega(M)$ corresponding to $q^{\eps}_t$ and $q^0_t$, respectively, Theorem \ref{t:convergence} implies that $P^{\eps}\rightarrow P^0$ (weakly) as $\eps\rightarrow 0$.

Of course, we see that our geodesic random walks, as well as the diffusion $q^0$ and thus the interpretation of the SDE \eqref{Eqn:ItoSDE}, depend on the Riemannian structure. This is closely related to the fact that neither Ito SDEs, normal coordinates, covariant derivatives, nor geodesics are preserved under diffeomorphisms, in general, and to the non-standard calculus of Ito's rule for Ito integrals, in contrast to Stratonovich integrals. Note also that in this construction, it would also be possible to allow $k>n$.

The most important special case of a geodesic random walk is when $k =n$ and the vector fields $V_1,\ldots,V_n$ are an orthonormal frame. In that case, $q^{\eps}_t$ is an isotropic random walk, as described in \cite{OurUrPaper} (see also \cite{PinksyRiem} for a related family of processes) and
\begin{equation}\label{eq:I}
L^\eps \to \Delta,
\end{equation}
where $\Delta = \dive \circ \grad$ is the Laplace-Beltrami operator (here the divergence is computed with respect to the Riemannian volume). In particular $q^0_t$ is  Brownian motion on $M$, up to time-change by a factor of 2.

If we further specialize to Euclidean space, we see that the convergence of the random walk to Eucldiean Brownian motion is just a special case of Donsker's invariance principle. The development of Brownian motion on a Riemannian manifold via approximations is also not new; one approach can be found in \cite{StroockGeo}.

\subsection{Volume sampling through the exponential map}\label{s:volumesampling-intro}

Let $(M,\metr)$ be a $n$-dimensional Riemannian manifold equipped with a general volume form $\omega$, that might be different from the Riemannian one $\mathcal{R}$. This freedom is motivated by the forthcoming applications to sub-Riemannian geometry, where there are several choices of intrinsic volumes and in principle there is not a preferred one \cite{ABB-Hausdorff,nostropopp}. Besides, even in the Riemannian case, one might desire to study operators which are symmetric w.r.t.\ a general measure $\omega = e^h \mathcal{R}$.

We recall that the gradient $\grad(\phi)$ of a smooth function depends only on the Riemannian structure, while the divergence $\dive_\omega(X)$ of a smooth vector field depends on the choice of the volume. In this setting we introduce an intrinsic diffusion operator, symmetric in $L^2(M,\omega)$, with domain $C^\infty_c(M)$ as the divergence of the gradient:
\begin{equation}
\Delta_\omega:=\div_\omega\circ\grad  = \sum_{i=1}^n X_i^2 + \dive_\omega(X_i)X_i,
\end{equation}
where in the last equality, which holds locally, $X_1,\ldots,X_n$ is a orthonormal frame. Recall that if $\omega$ and $\omega'$ are proportional, then $\Delta_\omega=\Delta_{\omega'}$.

If one would like to define a random walk converging to the diffusion associated to $\Delta_\omega$, one should make the construction in such a way that the choice of the volume enters in the definition of the walk. One way to do this is to ``sample the volume along the walk''. For all $s\geq 0$, consider the Riemannian exponential map
\begin{equation}
\exp_q(s;\cdot): S_q M \to M, \qquad q \in M,
\end{equation}
where $S_q M \subset T_q M$ is the Riemannian tangent sphere. In particular, for $v \in S_q M$, $\gamma_v(s) = \exp_q(s;v)$ is the unit-speed geodesic starting at $q$ with initial velocity $v$. Then $|\iota_{\dot\gamma_v(s)}\omega|$ is a density\footnote{If $\eta$ is an $m$-form on an $m$-dimensional manifold, the symbol $|\eta|$ denotes the associated density, in the sense of tensors.} on the Riemannian sphere of radius $s$. By pulling this back through the exponential map, we obtain a probability measure on $S_qM$ that ``gives more weight to geodesics arriving where there is more volume''.
\begin{definition}\label{d:Riemmes}
For any $q \in M$, and $\eps >0$, we define the family of densities $\mu_q^\eps$ on $S_qM$
\begin{equation}
\mu^{\eps}_q(v) := \frac{1}{N(q,\eps)}\left\lvert(\exp_q(\eps;\cdot)^* \iota_{\dot\gamma_v(\eps)}\omega)(v)\right\rvert, \qquad \forall v \in S_qM,
\end{equation}
where $N(q,\eps)$ is such that $\int_{S_qM} \mu_{q}^\eps = 1$. For $\eps =0$, we set $\mu^0_q$ to be the standard normalized Riemannian density form on $S_qM$.
\end{definition}
\begin{rmk}\label{r:absolutevalue}
For any fixed $q \in M$, and for sufficiently small $\eps>0$, the Jacobian determinant of $\exp_q(\eps;\cdot)$ does not change sign, hence the absolute value in definition~\ref{d:Riemmes} is not strictly necessary to obtain a well defined probability measure on $S_qM$. By assuming that the sectional curvature $\mathrm{Sec} \leq K$ is bounded from above, one can globally get rid of the need for the absolute value, as conjugate lengths are uniformly separated from zero.
\end{rmk}
Now we define a random walk $b^\eps_{t}$ as follows:
\begin{equation}\label{eq:backwards-Ito}
b_{(i+1) \dt}^\eps := \exp_{b_{i \dt}^\eps}(\eps; v), \qquad v \in S_{q}M  \text{ chosen with probability }  \mu_{q}^\eps.
\end{equation}
(see Definition~\ref{Def:Walk} and Remark~\ref{r:remfordefinition}). Let $P^\eps_{\omega}$ (we drop the $q$ from the notation as the starting point is fixed) be the probability measure on the space of continuous paths on $M$ starting at $q$ associated with $b_t^\eps$, and consider the associated family of operators\begin{align}
(L_{\omega}^\eps\phi)(q) & :=  \frac{1}{\dt} \mathbb{E}[\phi(b_{\dt}^\eps) - \phi(q) \mid b^\eps_0 =q] \\
& := \frac{1}{\dt}\int_{S_qM} [\phi(\exp_q(\eps;v))-\phi(q)] \mu_q^{\eps}(v), \qquad \forall q \in M,
\end{align}
(see Definition~\ref{d:operator}), for any $\phi \in C^\infty(M)$. A special case of Theorem~\ref{t:limit-Riemannian} gives
\begin{align}\label{eq:BI}
\lim_{\eps  \to 0} L_\omega^\eps = \underbrace{\Delta_{\mathcal{R}} + \grad(h)}_{\Delta_\omega} + \grad (h),
\end{align}
where $\grad(h)$ is understood as a derivation. By Theorem~\ref{t:convergence}, $P^\eps_\omega$ converges to a well-defined diffusion generated by the r.h.s.\ of \eqref{eq:BI}. This result is not satisfactory, as one would prefer $L_\omega^\eps \to \Delta_\omega$. Indeed, in \eqref{eq:BI}, we observe that the correction $2\grad(h)$ provided by the volume sampling construction is twice the desired one (except when $\omega$ is proportional to $\mathcal{R}$).

To address this problem we introduce a parameter $c \in [0,1]$ and consider, instead, the family $\mu^{c\eps}_q$. This corresponds to sampling the volume not at the final point of the geodesic segment, but at an intermediate point. We define a random walk as follows:
\begin{equation}
b_{(i+1)\dt ,c}^\eps := \exp_{q_{i \dt ,c}^\eps}(\eps, v), \qquad v \in S_qM  \text{ with probability }  \mu_q^{c\eps},
\end{equation}
that we call the \emph{geodesic random walk with volume sampling} (with volume $\omega$ and sampling ratio $c$).

\begin{rmk}
The case $c = 0$ does not depend on the choice of $\omega$ and reduces to the construction of Section~\ref{s:ito-intro}. The case $c=1$ corresponds to the process of Equation \eqref{eq:backwards-Ito}. 
 \end{rmk}
For $\eps>0$, let $P^\eps_{\omega,c}$ be the probability measure on the space of continuous paths on $M$ associated with the process $b_{t,c}^\eps$, and consider the family of operators
\begin{equation}
\begin{aligned}
(L_{\omega,c}^\eps\phi)(q) & :=  \frac{1}{\dt} \mathbb{E}[\phi(b_{\dt}^\eps) - \phi(q) \mid b^\eps_0 =q] \\
& := \frac{1}{\dt}\int_{S_qM} [\phi(\exp_q(\eps;v))-\phi(q)] \mu_q^{c\eps}(v), \qquad \forall q \in M,
\end{aligned}
\end{equation}
for any $\phi \in C^\infty(M)$. The family of Riemannian geodesic random walks with volume sampling converges to a well-defined diffusion, as follows.
\begin{theorem}\label{t:limit-Riemannian}
Let $(M,\g)$ be a complete Riemannian manifold with volume $\omega = e^h\mathcal{R}$, where $\mathcal{R}$ is the Riemannian one, and $h \in C^\infty(M)$. Let $c \in [0,1]$. Then $L_{\omega,c}^\eps \to L_{\omega,c}$, where
\begin{equation}\label{eq:ito-c}
L_{\omega,c} = \Delta_\omega +  (2c-1)\grad(h).
\end{equation}
Moreover $P^\eps_{\omega,c} \to P_{\omega,c}$ weakly, where $P_{\omega,c}$ is the law of the diffusion associated with $L_{\omega,c}$ (which we assume does not explode).
\end{theorem}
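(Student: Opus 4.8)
The plan is to prove $L_{\omega,c}^\eps \to L_{\omega,c}$ by a direct parabolic expansion in $\eps$ at a fixed point $q$, and then obtain the weak convergence $P^\eps_{\omega,c}\to P_{\omega,c}$ from Theorem~\ref{t:convergence}. Since each step of the walk follows a unit-speed geodesic of length $\eps$ in time $\dt=\eps^2/(2n)$, I would work in Riemannian normal coordinates at $q$, equivalently in geodesic polar coordinates $(s,v)\in(0,\infty)\times S_qM$.

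\emph{Step 1: the sampling density in polar form.} The main preliminary is to rewrite $\mu_q^{c\eps}$ so that it can be Taylor expanded. In geodesic polar coordinates the radial unit field is $\dot\gamma_v(s)$, and $\mathcal{R}$ corresponds, in these coordinates, to $\theta(s,v)\,ds\wedge d\sigma_q$, where $\sigma_q$ is the standard measure on $S_qM$ and $\theta(s,v)=s^{n-1}(1+O(s^2))$ has no term linear in $s$ (because $\sqrt{\det\g}$ has vanishing first-order part at $q$ in normal coordinates). Contracting $\omega=e^h\mathcal{R}$ with the radial field shows that on the sphere of radius $c\eps$ one has $\iota_{\dot\gamma_v(c\eps)}\omega = e^{h(\exp_q(c\eps;v))}\theta(c\eps,v)\,d\sigma_q$, and for $q$ in a fixed compact set and small $\eps$ the map $\exp_q(c\eps;\cdot)$ is a diffeomorphism with positive Jacobian, so the absolute value in Definition~\ref{d:Riemmes} is inert (cf.\ Remark~\ref{r:absolutevalue}). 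The factors $(c\eps)^{n-1}$ and $e^{h(q)}$ cancel against the normalization, and since $h(\exp_q(c\eps;v)) = h(q)+c\eps\,dh_q(v)+O(\eps^2)$ and $\int_{S_qM} dh_q(v)\,\bar\sigma_q=0$ by oddness,
\[
\mu_q^{c\eps} = \bigl(1 + c\eps\,dh_q(v) + O(\eps^2)\bigr)\,\bar\sigma_q ,
\]
where $\bar\sigma_q=\mu_q^0$ is the uniform probability measure on $S_qM$. The point to keep in mind is that the $O(1)$ and $O(\eps)$ parts of $\mu_q^{c\eps}$ involve only $\g_q$ and $dh_q$; curvature and $\nabla^2 h$ enter only at order $\eps^2$.

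\emph{Step 2: expansion of $L^\eps_{\omega,c}$.} Since $\nabla_{\dot\gamma_v}\dot\gamma_v=0$, Taylor expanding along $\gamma_v$ gives $\phi(\exp_q(\eps;v))-\phi(q) = \eps\,d\phi_q(v) + \tfrac{\eps^2}{2}\nabla^2\phi_q(v,v) + O(\eps^3)$, with remainder uniform on compacts. Multiplying by $1/\dt=2n/\eps^2$ and integrating against $\mu_q^{c\eps}$, exactly two contributions survive: the term $\eps\,d\phi_q(v)$ against the $O(\eps)$-part of the density, which by $\int_{S_qM}v^iv^j\,\bar\sigma_q=\tfrac1n\delta^{ij}$ produces $2c\,\g_q(\grad\phi,\grad h)=2c\,(\grad h)(\phi)|_q$; and the term $\tfrac{\eps^2}{2}\nabla^2\phi_q(v,v)$ against the $O(1)$-part, which produces $n\cdot\tfrac1n\tr_\g(\nabla^2\phi_q)=\Delta_{\mathcal{R}}\phi(q)$. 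Everything else---the $O(\eps^2)$-part of the density, the odd cubic term in $\phi$, higher remainders---is $O(\eps)$. Hence $L^\eps_{\omega,c}\phi(q)=\Delta_{\mathcal{R}}\phi(q)+2c\,(\grad h)(\phi)(q)+O(\eps)$, and since $\Delta_\omega=\Delta_{\mathcal{R}}+\grad(h)$ this is $\bigl(\Delta_\omega+(2c-1)\grad(h)\bigr)\phi(q)+O(\eps)$. Smooth dependence of all Taylor data on $q$ makes the $O(\eps)$ uniform on compacts, i.e.\ $L^\eps_{\omega,c}\to L_{\omega,c}$.

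\emph{Step 3 and main obstacle.} The walk $b^\eps_{t,c}$ satisfies Definition~\ref{Def:Walk} with $\kappa=1$, and $L_{\omega,c}$ is a smooth second-order operator with no zeroth-order term whose principal symbol is that of $\Delta_{\mathcal{R}}$ (the cometric), hence non-negative definite; assuming, as in the statement, that its diffusion does not explode, Theorem~\ref{t:convergence} gives $P^\eps_{\omega,c}\to P_{\omega,c}$ weakly. The only genuinely delicate point is Step~1: putting $\mu_q^{c\eps}$ into polar form in order to see that, after rescaling by $1/\dt$, only its $O(1)$ and $O(\eps)$ parts matter---this is what causes curvature and $\nabla^2 h$ to drop out and leaves the clean formula. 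The rest is routine bookkeeping with spherical moments and Taylor remainders.
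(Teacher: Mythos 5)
Your proposal is correct and follows essentially the same route as the paper's proof: expand the sampling density in normal/polar coordinates (the Riemannian part contributes only at order $\eps^2$, so the order-$\eps$ term comes solely from $e^{h}$, giving $\bigl(1+c\eps\,dh_q(v)+O(\eps^2)\bigr)\bar\sigma_q$ after normalization), Taylor-expand $\phi$ along geodesics, kill the odd spherical moments, use $\int_{S_qM}v^iv^j\,\bar\sigma_q=\tfrac{1}{n}\delta^{ij}$ to extract $\Delta_{\mathcal{R}}+2c\,\grad(h)=\Delta_\omega+(2c-1)\grad(h)$, and conclude with Theorem~\ref{t:convergence}. The only cosmetic difference is that you phrase the second-order term covariantly via $\nabla^2\phi$ while the paper works with partial derivatives in normal coordinates; these coincide at $q$.
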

\begin{rmk}
We have these alternative forms of \eqref{eq:ito-c}, obtained by unraveling the definitions:
\begin{align}
L_{\omega,c} &= \Delta_{e^{(2c-1)h }\omega}  = \Delta_{e^{2c h}\mathcal{R}} =\Delta_{\mathcal{R}} +  2c\grad(h) \\
& = \sum_{i=1}^n X_i^2+ \left( 2c \dive_\omega(X_i) +(1- 2c)\dive_{\mathcal{R}}(X_i)\right)X_i ,
\end{align}
where, in the last line, $X_1,\ldots,X_n$ is a local orthonormal frame.
\end{rmk}

\begin{figure}\label{Fig:VolSampling}
\includegraphics[scale=1.4]{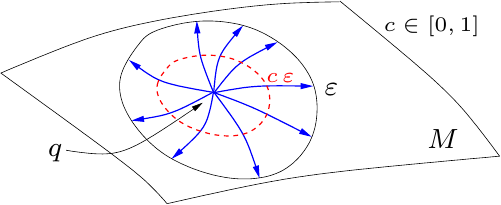}
\caption{Geodesic random walk with sampling of the volume $\omega$ and ratio $c$. For each $\eps$, the paths of the walk are piecewise-smooth geodesics.}
\end{figure}

As a simple consequence of \eqref{eq:ito-c} or its alternative formulations, we have the following statement, which appears to be new even in the Riemannian case.
\begin{cor}\label{c:self-adj-Riem}
Let $(M,\g)$ be a complete Riemannian manifold. The operator $L_{\omega,c}$ with domain $C^\infty_c(M)$ is essentially self-adjoint in $L^2(M,\omega)$ if and only at least one of the following two conditions hold:
\begin{itemize}
\item[(i)] $c=1/2$;
\item[(ii)] $\omega$ is proportional to the Riemannian volume (i.e.\ $h$ is constant).
\end{itemize}
\end{cor}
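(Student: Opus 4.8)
The plan is to reduce the claim to the question of \emph{symmetry} of $L_{\omega,c}$ on $C^\infty_c(M)$ in $L^2(M,\omega)$. On the one hand, an essentially self-adjoint operator is in particular symmetric (its closure is self-adjoint, hence symmetric, and $L_{\omega,c}|_{C^\infty_c(M)}$ is a restriction of that closure). On the other hand, I will observe that in each of the cases (i) and (ii) the operator $L_{\omega,c}$ is exactly $\Delta_\omega$, which is essentially self-adjoint in $L^2(M,\omega)$ because $M$ is complete. So the corollary follows once I prove: \emph{$L_{\omega,c}$ is symmetric on $C^\infty_c(M)$ in $L^2(M,\omega)$ if and only if $c = 1/2$ or $h$ is constant.}

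To establish this, I would compute the formal $L^2(M,\omega)$-adjoint of $L_{\omega,c} = \Delta_\omega + (2c-1)\grad(h)$, with the first-order term read as a derivation. Recall $\Delta_\omega = \div_\omega\circ\grad$ is formally self-adjoint, since $\int_M (\Delta_\omega f)\,g\,\omega = -\int_M \langle\grad f,\grad g\rangle_\g\,\omega$ for $f,g\in C^\infty_c(M)$. For any smooth vector field $X$, integration by parts against $\omega$ (using $\int_M \div_\omega(uX)\,\omega = 0$ for $u\in C^\infty_c(M)$) gives $X^\dagger = -X - \div_\omega(X)$; taking $X = \grad h$ and using $\div_\omega(\grad h) = \Delta_\omega h$ yields $(\grad h)^\dagger = -\grad(h) - \Delta_\omega h$. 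Hence
\[
L_{\omega,c}^\dagger \;=\; \Delta_\omega \;-\; (2c-1)\,\grad(h)\;-\;(2c-1)\,\Delta_\omega h ,
\]
and $L_{\omega,c} = L_{\omega,c}^\dagger$ on $C^\infty_c(M)$ precisely when the differential operator $(2c-1)\bigl(2\grad(h) + \Delta_\omega h\bigr)$ vanishes identically on $M$. Its vector-field part is $2\grad(h)$ and its zeroth-order part is $\Delta_\omega h$, so it is the zero operator iff $\grad h\equiv 0$ (which then forces $\Delta_\omega h = 0$), i.e., since $M$ is connected, iff $h$ is constant. Therefore $L_{\omega,c}$ is symmetric iff $2c-1 = 0$ or $h$ is constant — exactly conditions (i) and (ii).

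It remains to handle the two symmetric cases: when $c = 1/2$ the first-order term drops by definition, and when $h$ is constant $\grad h = 0$, so in both cases $L_{\omega,c} = \Delta_\omega$; and $\Delta_\omega$ with domain $C^\infty_c(M)$ is essentially self-adjoint in $L^2(M,\omega)$ since $M$ is complete — the standard essential self-adjointness of the (weighted) Laplacian on a complete manifold, of Gaffney--Strichartz type, as recorded in item (iii) of Table~\ref{t:table1}. Combined with the reduction above, this proves both directions.

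I do not expect a real obstacle here: the computation of $(\grad h)^\dagger$ and the ensuing comparison of differential operators are routine, and the only points to handle carefully are the (standard) implication ``essentially self-adjoint $\Rightarrow$ symmetric'' and the sign convention in $X^\dagger = -X - \div_\omega(X)$.
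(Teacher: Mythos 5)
Your argument is correct and is exactly the intended one: the paper states this corollary as an immediate consequence of the formula $L_{\omega,c}=\Delta_\omega+(2c-1)\grad(h)$, and your reduction to symmetry via the formal adjoint $X^\dagger=-X-\div_\omega(X)$, together with the Gaffney--Strichartz essential self-adjointness of $\Delta_\omega$ on a complete manifold for the converse, is the standard way to fill in the details. No gaps; the only point worth keeping explicit, which you do, is that failure of symmetry already rules out essential self-adjointness.
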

The previous discussion stresses the particular role played by the Riemannian volume. Not only does it coincide with the Hausdorff measure, but according to the above construction, it is the only volume (up to  constant rescaling) that gives the correct self-adjoint operator for \emph{any} choice of the parameter $c$.

\begin{rmk}
If we want the volume-sampling scheme to produce the Laplacian w.r.t. the volume $\omega$ being sampled, we should take $c=1/2$. With hindsight, this might not be surprising. By linearity, we see that sampling with $c=1/2$ is equivalent to sampling the volume along the entire step, uniformly w.r.t time (recall that the geodesics are traversed with constant speed), rather than privileging any particular point along the path.
\end{rmk}
\begin{rmk}
One can prove that the limiting operator corresponding to the geodesic random walk with volume sampling ratio $c=1$ is equal, up to a constant (given by the ratio of the area of the Euclidean unit sphere and the volume of the unit ball in dimension $n$), to the limiting operator corresponding to a more general class of random walk where we step to points of the metric ball $B_{q}(\varepsilon)$ of radius $\varepsilon$, uniformly w.r.t. its normalized volume $\omega/\omega(B_{q}(\varepsilon))$. This kind of random walk for the Riemannian volume measure has also been considered in \cite{LebeauRiem}, in relation with the study of its spectral properties.
\end{rmk}

\section{Geodesic random walks in the sub-Riemannian setting}\label{s:SR-RW}

We want to define a sub-Riemannian version of the geodesic random walk with volume sampling, extending the Riemannian construction of the previous section. Recall the definition of (sub-)Riemannian manifold in Section~\ref{s:convergence}.

\subsection{Geodesics and exponential map} As in Riemannian geometry, \emph{geode\-sics} are horizontal curves that have constant speed and locally minimize the length between their endpoints. Define the \emph{sub-Riemannian Hamiltonian} $H: T^*M \to \R$ as
\begin{equation}
H(\lambda) := \frac{1}{2} \sum_{i=1}^k \langle\lambda,X_i\rangle^2,
\end{equation}
for any local orthonormal frame $X_1,\ldots,X_k \in \Gamma(\distr)$. Let $\sigma$ be the natural symplectic structure on $T^*M$, and $\pi: T^*M \to M$. The \emph{Hamiltonian vector field} $\vec{H}$ is the unique vector field on $T^*M$ such that $dH = \sigma(\cdot,\vec{H})$. Then the Hamilton equations are 
\begin{equation}\label{eq:hamilton}
\dot{\lambda}(t) = \vec{H}(\lambda(t)).
\end{equation}
Solutions of \eqref{eq:hamilton} are smooth curves on $T^*M$, and their projections $\gamma(t):=\pi(\lambda(t))$ on $M$ will be geodesics. In the Riemannian setting, all geodesics can be recovered uniquely in this way. In the sub-Riemannian one, this is no longer true, as \emph{abnormal geodesics} can appear, which are geodesics that might not come from projections of solutions to \eqref{eq:hamilton}.

For any $\lambda \in T^*M$ we consider the geodesic $\gamma_\lambda(t)$, obtained as the projection of the solution of \eqref{eq:hamilton} with initial condition $\lambda(0) = \lambda$. Observe that the Hamiltonian function, which is constant on $\lambda(t)$, measures the speed of the associated geodesic: 
\begin{equation}
2H(\lambda) = \|\dot\gamma_\lambda(t)\|^2,\qquad \lambda \in T^*M.
\end{equation}
Since $H$ is fiber-wise homogeneous of degree $2$, we have the following rescaling property:
\begin{equation}
\gamma_{\alpha \lambda}(t) = \gamma_{\lambda}(\alpha t),\qquad \alpha > 0.
\end{equation}
This justifies the restriction to the subset of initial covectors lying in the level set $2H=1$. 
\begin{definition}
The \emph{unit cotangent bundle} is the set of initial covectors such that the associated geodesic has unit speed, namely
\begin{equation}
\cyl := \{\lambda \in T^*M \mid 2H(\lambda) = 1\} \subset T^*M.
\end{equation}
\end{definition}
For any $\lambda \in \cyl$, the geodesic $\gamma_\lambda(t)$ is \emph{parametrized by arc-length}, namely $\ell(\gamma|_{[0,T]}) = T$.
\begin{rmk}
We stress that, in the genuinely sub-Riemannian case, $H|_{T_q^*M}$ is a degenerate quadratic form. It follows that the fibers $\cyl_q$ are non-compact cylinders, in sharp contrast with the Riemannian case (where the fibers $\cyl_q$ are spheres).
\end{rmk}
For any $\lambda \in \cyl$, the \emph{cut time} $t_c(\lambda)$ is defined as the time at which $\gamma_\lambda(t)$ loses optimality
\begin{equation}
t_c(\lambda) := \sup \{t>0 \mid d(\gamma_\lambda(0),\gamma_\lambda(t)) = t\}.
\end{equation}
In particular, for a fixed $\eps>0$ we define
\begin{equation}\label{eq:cotingj}
\cyl_q^\eps := \{\lambda \in \cyl_q \mid t_c(\lambda) \geq \eps \} \subset \cyl_q,
\end{equation}
as the set of unit covector such that the associated geodesic is optimal up to time $\eps$.
\begin{definition}
Let $D_q \subseteq [0,\infty) \times \cyl_q$ the set of the pairs $(t,\lambda)$ such that $\gamma_\lambda$ is well defined up to time $t$. The \emph{exponential map} at $q \in M$ is the map  $\exp_q: D_q \to M$ that associates with $(t,\lambda)$ the point $\gamma_\lambda(t)$.
\end{definition}
Under the assumption that $(M,d)$ is complete, by the (sub-)Riemannian Hopf-Rinow Theorem (see, for instance, \cite{nostrolibro,riffordbook}), we have that any closed metric ball is compact and normal geodesics can be extended for all times, that is $D_q = [0,\infty) \times \cyl_q$, for all $q \in M$.

\subsection{Sub-Laplacians}
For any function $\phi \in C^\infty(M)$, the \emph{horizontal gradient} $\grad(\phi) \in \Gamma(\distr)$ is, at each point, the horizontal direction of steepest slope of $\phi$, that is
\begin{equation}\label{eq:grad}
\g(\grad(\phi),X) = \langle d\phi, X\rangle, \qquad \forall X \in \Gamma(\distr).
\end{equation}
Since in the Riemannian case this coincides with the usual gradient, this notation will cause no confusion. If $X_1,\ldots,X_k$ is a local orthonormal frame, we have
\begin{equation}
\grad(\phi) = \sum_{i=1}^k X_i(\phi) X_i.
\end{equation}
For any fixed volume form $\omega \in \Lambda^n M$ (or density if $M$ is not orientable), the \emph{divergence} of a smooth vector field $X$ is defined by the relation $\mathcal{L}_X \omega = \dive_{\omega}(X)$, where $\mathcal{L}$ denotes the Lie derivative. Notice that the sub-Riemannian structure does not play any role in the definition of $\dive_\omega$. Following \cite{montgomerybook,laplacian}, the \emph{sub-Laplacian} on $(M,\distr,\g)$ associated with $\omega$ is 
\begin{equation}\label{eq:sublap}
\Delta_\omega := \div_{\omega}\circ \grad  = \sum_{i=1}^k X_i^2 + \dive_\omega(X_i)X_i,
\end{equation}
where in the last equality, which holds locally, $X_1,\ldots,X_k$ is a orthonormal frame. Again, if $\omega$ and $\omega'$ are proportional, then $\Delta_\omega=\Delta_{\omega'}$.

The sub-Laplacian is symmetric on the space $C^\infty_c(M)$ of smooth functions with compact support with respect to the $L^2(M,\omega)$ product. 
If $(M,d)$ is complete and there are no non-trivial abnormal minimizers, then $\Delta_\omega$ is essentially self-adjoint on $C^\infty_c(M)$ and has a smooth positive heat kernel \cite{strichartz,strichartzerrata}.

The sub-Laplacian will be intrinsic if we choose an intrinsic volume. See \cite[Sec. 3]{OurUrPaper} for a discussion of intrinsic volumes in sub-Riemannian geometry. A natural choice, at least in the equiregular setting, is Popp volume \cite{nostropopp,montgomerybook}, which is smooth. Other choices are possible, for example the Hausdorff or the spherical Hausdorff volume which, however, are not always smooth \cite{ABB-Hausdorff}. For the moment we let $\omega$ be a general smooth volume.

\subsection{The sub-Riemannian geodesic random walk with volume sampling}

In contrast with the Riemannian case, where $S_qM$ has a well defined probability measure induced by the Riemannian structure, we have no such construction on $\cyl_q$. Thus, it is not clear how to define a geodesic random walk in the sub-Riemannian setting. 

For $\eps>0$, consider the sub-Riemannian exponential map
\begin{equation}
\exp_q(\eps;\cdot): \cyl_q \to M, \qquad q \in M.
\end{equation}
If $\lambda \in \cyl_q$, then $\gamma_\lambda(\eps) = \exp_q(\eps;\lambda)$ is the associated unit speed geodesic starting at $q$.

One wishes to repeat Definition~\ref{d:Riemmes}, using the exponential map to induce a density on $\cyl_q$, through the formula $\mu_q^\eps(\lambda) \propto |(\exp_q(\eps;\cdot)^* \iota_{\dot\gamma_\lambda(\eps)}\omega)(\lambda)|$. 
However, there are non-trivial difficulties arising in the genuine sub-Riemannian setting.
\begin{itemize}
\item The exponential map is not a local diffeomorphism at $\eps =0$, and Riemannian normal coordinates are not available. This tool is used for proving the convergence of walks in the Riemannian setting;
\item Due to the presence of zeroes in the Jacobian determinant of $\exp_q(\eps;\cdot)$ for arbitrarily small $\eps$, the absolute value in the definition of $\mu_q^\eps$ is strictly necessary (in contrast with the Riemannian case, see Remark~\ref{r:absolutevalue}).
\item Since $\cyl_q$ is not compact, there is no guarantee that $\int_{\cylsmall_q} \mu_q^\eps < +\infty$;
\end{itemize}
Assuming that $\int_{\cylsmall_q} \mu_q^\eps < + \infty$, we generalize Definition~\ref{d:Riemmes} as follows.
\begin{definition}\label{d:mesIto}
For any $q \in M$, and $\eps >0$, we define the family of densities $\mu_q^\eps$ on $\cyl_q$
\begin{equation}
\mu^{\eps}_q(\lambda) := \frac{1}{N(q,\eps)}\left\lvert(\exp_q(\eps;\cdot)^* \iota_{\dot\gamma_\lambda(\eps)}\omega)(\lambda)\right\rvert, \qquad \forall \lambda \in \cyl_q,
\end{equation}
where $N(q,\eps)$ is fixed by the condition $\int_{\cylsmall_q} \mu_{q}^{\eps} = 1$.
\end{definition}

As we did in Section~\ref{s:volumesampling-intro}, and for $c \in (0,1]$, we build a random walk 
\begin{equation}
b_{(i+1)\dt,c}^{\eps}:= \exp_{b_{i \dt,c}^{\eps}}(\eps;\lambda), \qquad \lambda \in \cyl_q \text{ chosen with probability } \mu_q^{c\eps}.
\end{equation}

Let $P^\eps_{\omega,c}$ be the associated probability measure on the space of continuous paths on $M$ starting from $q$, and consider the corresponding family of operators, which in this case is
\begin{equation}\label{eq:operatoreps}
\begin{aligned}
(L_{c,\omega}^\eps\phi)(q) & =\frac{1}{\dt}\mathbb{E}[\phi(b_{\dt,c}^\eps)-\phi(q)\mid b_{0,c}^\eps = q]\\
&= \frac{1}{\dt}\int_{\cyl_q} [\phi(\exp_q(\eps;\lambda))-\phi(q)] \mu_q^{c\eps}(\lambda), \qquad \forall q \in M,
\end{aligned}
\end{equation}
for any $\phi \in C^\infty(M)$. Clearly when $k=n$, \eqref{eq:operatoreps} is the same family of operators associated with a Riemannian geodesic random walk with volume sampling discussed in Section~\ref{s:volumesampling-intro}, and this is why - without risk of confusion - we used the same symbol.
\begin{rmk}
As mentioned, in sub-Riemannian geometry abnormal geodesics may appear. More precisely, one may have \emph{strictly abnormal geodesics} that do not arise as projections of solutions of \eqref{eq:hamilton}. The class of random walks that we have defined never walk along these trajectories, but can walk along abnormal segments that are not strictly abnormal.

The (minimizing) Sard conjecture states that the set of endpoints of strictly abnormal (minimizing) geodesics starting from a given point has measure zero in $M$. However, this remains a hard open problem in sub-Riemannian geometry \cite{AAA-openproblems}. See also \cite{Sard-prop,Sard-Rif-Trel,agrasmooth} for recent progress on the subject.
\end{rmk}

Checking the convergence of \eqref{eq:operatoreps} is difficult in the general sub-Riemannian setting ($k< n$), in part due to the difficulties outlined above. We treat in detail the case of contact Carnot groups, where we find some surprising results. These structures are particularly important as they arise as Gromov-Hausdorff tangent cones of contact sub-Riemannian structures \cite{bellaiche,mitchell}, and play the same role of Euclidean space in Riemannian geometry.

\subsection{Contact Carnot groups}\label{s:contactcarnot}

Let $M = \R^{2d+1}$, with coordinates $(x,z) \in \R^{2d}\times \R$. Consider the following global vector fields
\begin{equation}
X_i = \partial_{x_i} - \frac{1}{2} (A x)_i \partial_z, \qquad i=1,\ldots,2d,
\end{equation}
where
\begin{equation}
A = \begin{pmatrix} \alpha_1 J & & \\
& \ddots & \\
& & \alpha_d J
\end{pmatrix}, \qquad J = \begin{pmatrix}
0 & -1 \\
1 & 0
\end{pmatrix},
\end{equation}
is a skew-symmetric, non-degenerate matrix with singular values $0 < \alpha_1 \leq \ldots \leq \alpha_d$. A \emph{contact Carnot group} is the sub-Rieman\-nian structure on $M = \R^{2d+1}$ such that $\distr_q = \spn\{X_1,\ldots,X_{2d}\}_q$ for all $q \in M$, and $\g(X_i,X_j) = \delta_{ij}$. Notice that
\begin{equation}
[X_i,X_j]= A_{ij} \partial_z.
\end{equation}
Set $\mathfrak{g}_1 := \spn\{X_1,\ldots,X_{2d}\}$ and $\mathfrak{g}_2 := \spn \{\partial_z\}$. The algebra $\mathfrak{g}$ generated by the $X_i$'s and $\partial_z$ admits a nilpotent stratification of step $2$, that is
\begin{equation}
\mathfrak{g} = \mathfrak{g}_1 \oplus \mathfrak{g}_2, \qquad \mathfrak{g}_1,\mathfrak{g}_2 \neq \{0\},
\end{equation}
with
\begin{equation}
[\mathfrak{g}_1,\mathfrak{g}_1] = \mathfrak{g}_{2}, \qquad \text{and} \qquad [\mathfrak{g}_1,\mathfrak{g}_2] = [\mathfrak{g}_2,\mathfrak{g}_2]= \{0\}.
\end{equation}
There is a unique connected, simply connected Lie group $G$ such that $\mathfrak{g}$ is its Lie algebra of left-invariant vector fields. The group exponential map,
\begin{equation}
\mathrm{exp}_{G} : \mathfrak{g} \to G,
\end{equation}
associates with $v \in \mathfrak{g}$ the element $\gamma(1)$, where $\gamma: [0,1] \to G$ is the unique integral curve of the vector field $v$ such that $\gamma(0) = 0$. Since $G$ is simply connected and $\mathfrak{g}$ is nilpotent, $\mathrm{exp}_G$ is a smooth diffeomorphism. Thus we can identify $G \simeq \R^{2d+1}$ equipped with a polynomial product law $\star$ given by
\begin{equation}
(x,z) \star (x',z') = \left(x+x',z+z' + \tfrac{1}{2} x^* A x'\right).
\end{equation}
Denote by $L_q$ the left-translation $L_q (p) := q \star p$. The fields $X_i$ are left-invariant, and as a consequence the sub-Riemannian distance is left-invariant as well, in the sense that $d(L_q(p_1),L_q(p_2)) = d(p_1,p_2)$.

\begin{rmk}
As consequence of left-invariance, contact Carnot groups are complete as metric spaces. Moreover all abnormal minimizers are trivial. Hence, for each volume $\omega$, the operator $\Delta_\omega$ with domain $C_c^\infty(M)$ is essentially self-adjoint in $L^2(M,\omega)$.
\end{rmk}

\begin{example}\label{ex:heis}
The $2d+1$ dimensional \emph{Heisenberg group} $\mathbb{H}_{2d+1}$, for $d \geq 1$, is the contact Carnot group with $\alpha_1=\ldots=\alpha_d = 1$.
\end{example}

\begin{example}\label{ex:biheis}
The \emph{bi-Heisenberg group} is the $5$-dimensional contact Carnot group with $0<\alpha_1 < \alpha_2$. That is, $A$ has two distinct singular values.
\end{example}

A natural volume is the Popp volume $\mathcal{P}$. By the results of \cite{nostropopp}, we have the formula
\begin{equation}
\mathcal{P} = \frac{1}{2\sum_{i=1}^{d} \alpha_i^2}dx_1 \wedge \ldots \wedge dx_{2d} \wedge dz.
\end{equation}
In particular $\mathcal{P}$ is left-invariant and, up to constant scaling, coincides with the Lebesgue volume of $\mathbb{R}^{2d+1}$. One can check that $\dive_{\mathcal{P}}(X_i) = 0$, hence the sub-Laplacian w.r.t.\ $\mathcal{P}$ is the sum of squares\footnote{This is the case for any sub-Riemannian left-invariant structure on a unimodular Lie group \cite{laplacian}.}:
\begin{equation}
\Delta_{\mathcal{P}} = \sum_{i=1}^{2d} X_{i}^2,
\end{equation}
In this setting, we are able to prove the convergence of the sub-Riemannian random walk with volume sampling.

\begin{theorem}\label{t:limit-contact-Heis}
Let $\mathbb{H}_{2d+1}$ be the Heisenberg group, equipped with a general volume $\omega = e^h\mathcal{P}$. Then $L_{\omega,c}^\eps \to L_{\omega,c}$, where 
\begin{equation}
L_{\omega,c} = \sigma(c) \left(\sum_{i=1}^{2d} X_i^2 + 2c X_i(h)\right) = \sigma(c)\left(\dive_{\omega}\circ \grad + (2c-1) \grad(h)\right),
\end{equation}
and $\sigma(c)$ is a constant (see Remark~\ref{r:constantHeis}). 
\end{theorem}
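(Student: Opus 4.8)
The plan is to compute the operator $L_{\omega,c}^\eps$ explicitly using an exact parametrization of Heisenberg geodesics, expand in powers of $\eps$, and identify the limit. Unlike the Riemannian case, Riemannian normal coordinates are unavailable, so the first step is to set up adapted ``exponential-type'' coordinates centered at the starting point $q$. Since the structure is left-invariant and everything in sight (the fields $X_i$, Popp volume $\mathcal{P}$, the distance, the exponential map) is $\star$-left-invariant, one may assume $q = 0$; the general case follows by left-translating. In these coordinates I would write down the explicit solution of the Hamilton equations \eqref{eq:hamilton} on $\mathbb{H}_{2d+1}$: the normal geodesics are well known, with the horizontal components $x(t)$ tracing arcs of ellipses (a single circular arc when all $\alpha_i = 1$) and $z(t)$ the associated signed area, parametrized by the covector $\lambda = (p, p_z) \in \cyl_0$, where $2H(\lambda) = |p|^2 = 1$ and $p_z \in \R$ is the ``vertical'' momentum. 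The cylinder $\cyl_0 \cong S^{2d-1} \times \R$ is non-compact, so one must first check $\int_{\cylsmall_0} \mu_0^{c\eps} < \infty$, using the explicit Jacobian below.

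Next I would compute the density $\mu_0^{c\eps}(\lambda)$. The key quantity is the Jacobian determinant of $\lambda \mapsto \exp_0(c\eps; \lambda)$ together with the contraction $\iota_{\dot\gamma_\lambda(c\eps)}\omega$; because $\omega = e^h \mathcal{P}$ and $\mathcal{P}$ is (up to constant) Lebesgue measure, this is a totally explicit trigonometric expression in $c\eps$, $|p_z|$, and the singular values $\alpha_i$. For the Heisenberg group (all $\alpha_i = 1$) this simplifies dramatically: the Jacobian is a product of $\sinc$-type factors in $c\eps p_z$, which is why the $\sinc$ operator was declared in the preamble. After normalizing by $N(0,c\eps)$ one obtains a probability density on $\cyl_0$ that, as $\eps \to 0$, concentrates in the vertical variable in a controlled way: the change of variables $u = c\eps p_z$ turns the $p_z$-integral into a fixed $\eps$-independent integral against a weight $w(u)$, and this is the mechanism that produces the constant $\sigma(c)$.

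Then I would plug this into \eqref{eq:operatoreps}, Taylor-expand $\phi(\exp_0(\eps;\lambda)) - \phi(0)$ to second order in $\eps$ (here $\dt = \eps^2/(4d)$), average over $\lambda$ against $\mu_0^{c\eps}$, and collect terms. The zeroth-order term vanishes; the first-order-in-$\eps$ terms must cancel after integrating over the sphere $S^{2d-1}$ in $p$ (odd symmetry) and over $p_z$ (the density is even in $p_z$, by the absolute value), leaving at order $\eps^2/\dt = 4d$ a second-order operator. The isotropy over the $p$-sphere forces the second-order part to be proportional to $\sum_i X_i^2$; the constant of proportionality is precisely $\sigma(c) = \int w(u)\,du$-type normalization, and it is genuinely $\neq 1$ in the sub-Riemannian case (and $\to 0$ as $c \to 0$, giving the $(\star)$ entry in Table~\ref{t:table1}). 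The first-order term that survives comes from the $h$-dependence of $\omega = e^h\mathcal{P}$ entering the density: expanding $e^{h(\exp_0(c\eps;\lambda))} = e^{h(0)}(1 + c\eps\, dh(\dot\gamma_\lambda(0)) + O(\eps^2))$ biases the measure in the gradient direction, and carefully tracking the factor $c$ (sampling at fraction $c$ of the step) against the average over the remaining $(1-c)$ portion of the step reproduces the $(2c-1)\grad(h)$ correction, exactly as in Theorem~\ref{t:limit-Riemannian}, now multiplied by $\sigma(c)$. Uniformity of the convergence on compact sets follows because all expansions are smooth in $q$ by left-invariance, and the error terms are uniformly $O(\eps^3)$; finally, Theorem~\ref{t:convergence} upgrades $L_{\omega,c}^\eps \to L_{\omega,c}$ to weak convergence $P^\eps_{\omega,c} \to P_{\omega,c}$, noting the diffusion is essentially self-adjoint (hence non-exploding) by the left-invariance remark.

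The main obstacle I expect is the non-compactness of $\cyl_0$ combined with the vanishing of the Jacobian at conjugate times: one must show that the contribution of large $|p_z|$ (where geodesics of length $c\eps$ may already be non-optimal or where the Jacobian oscillates and vanishes) is negligible and that the normalization $N(0,c\eps)$ behaves like a clean power of $\eps$ times a convergent integral. Controlling this tail — and verifying that the absolute value in $\mu_0^{c\eps}$ does not spoil the smooth $\eps$-expansion — is the technical heart of the argument, and is where the specific trigonometric structure of Heisenberg geodesics (as opposed to general contact Carnot groups, treated separately in Theorem~\ref{t:limit-contact-carnot}) makes the computation tractable.
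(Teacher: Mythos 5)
Your proposal is correct and follows essentially the same route as the paper: the paper omits a separate proof of Theorem~\ref{t:limit-contact-Heis} and obtains it as the special case $\alpha_1=\cdots=\alpha_d=1$ of Theorem~\ref{t:limit-contact-carnot}, whose proof is exactly the computation you outline (reduction to $q=0$ by left-invariance, explicit exponential map and trigonometric Jacobian, rescaling $y=c\eps p_z$ to produce $\sigma(c)$, vanishing of first-order terms by parity in $p_x$ and $p_z$, and the $e^{h}$-factor yielding the $2c\grad(h)$ drift). The integrability of $|g|$ on the non-compact cylinder, which you correctly flag as the technical heart, is handled in the paper by the explicit decay $|g_i|\in L^1(\R)$ of the Jacobian factors.
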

In particular $L_{\omega,c}$ is essentially self-adjoint in $L^2(M,\omega)$ if and only if $c=1/2$ or $\omega = \mathcal{P}$ (i.e. $h$ is constant). The proof of the above theorem is omitted, as it is a consequence of the next, more general, result. In the general case, the picture is different, and quite surprising, since not even the principal symbol is the expected one.

\begin{theorem}\label{t:limit-contact-carnot}
Let $(\R^{2d+1},\distr,\g)$ be a contact Carnot group, equipped with a general volume $\omega = e^h\mathcal{P}$ and let $c \in (0,1]$. Then $L_{\omega,c}^\eps \to L_{\omega,c}$, where
\begin{equation}
L_{\omega,c} = \sum_{i=1}^d \sigma_{i}(c) \left( X_{2i-1}^2 + X_{2i}^2\right) + 2c \sum_{i=1}^d \sigma_i(c) \left(X_{2i-1}(h)X_{2i-1} + X_{2i}(h)X_{2i}\right),
\end{equation}
where $\sigma_1(c),\ldots,\sigma_d(c) \in \R$ are
\begin{equation}
\sigma_{i}(c) := \frac{c d }{(d+1)\sum_{i=1}^d \int_{-\infty}^{+\infty} |g_i(y)| dy}  \sum_{\ell=1}^d (1+\delta_{\ell i}) \int_{-\infty}^{+\infty}  |g_\ell(c p_z)|\frac{\sin(\tfrac{\alpha_i p_z}{2})^2}{(\alpha_i p_z/2)^2}dp_z,
\end{equation}
and, for $i=1,\ldots,d$
\begin{equation}
g_i(y)= \left( \prod_{j\neq i} \sin\left(\tfrac{\alpha_j y}{2}\right)\right)^2 \frac{\sin\left(\tfrac{\alpha_i y}{2}\right) \left(\tfrac{\alpha_i y}{2} \cos\left(\tfrac{\alpha_i y}{2}\right)- \sin\left(\tfrac{\alpha_i y}{2}\right)\right) }{(y/2)^{2d+2}}.
\end{equation}
Moreover, $P^\eps_{\omega,c} \to P_{\omega,c}$ weakly, where $P_{\omega,c}$ is the law of the process associated with $L_{\omega,c}$.
\end{theorem}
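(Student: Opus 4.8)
The plan is to compute the operators $L^\eps_{\omega,c}$ explicitly---this is possible on a contact Carnot group because the geodesics and the exponential map are known in closed form---to pass to the limit $\eps\to0$, and then to invoke Theorem~\ref{t:convergence} for the weak convergence of the walks.

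I would first reduce to the origin. Since $\distr$, $\g$ and the Popp volume $\mathcal{P}$ are left-invariant, the exponential map is left-equivariant ($\exp_{L_q(q')}=L_q\circ\exp_{q'}$ under the natural identification of unit cotangent fibers), so it suffices to compute $L^\eps_{\omega,c}\phi(0)$ for an arbitrary smooth $h$; the formula at a general point $q$ then follows by replacing $h$ with $h\circ L_q$ and using that the $X_i$ are left-invariant. At the origin, with $\lambda=(p,p_z)\in\R^{2d}\times\R$, the functions $h_i(\lambda)=\langle\lambda,X_i\rangle$ satisfy $\{h_i,h_j\}=A_{ij}p_z$, so along the Hamiltonian flow $p_z$ is conserved, $\dot h=p_zAh$ and $\dot x=h$, giving $h(t)=e^{tp_zA}p$ and $x(t)=(p_zA)^{-1}(e^{tp_zA}-I)p$, with $z(t)$ obtained by one further quadrature; restricting to $2H=|p|^2=1$ identifies $\cyl_0$ with $\mathbb{S}^{2d-1}\times\R$. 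Because $A=\bigoplus_{\ell=1}^d\alpha_\ell J$, everything decouples into $d$ rotating $2$-planes with angular speeds $\alpha_\ell p_z$, and one finds $x(\eps)=\eps\,f(\eps p_z;p)$ and $z(\eps)=\eps^2\,k(\eps p_z;p)$ with $f,k$ (and their derivatives) explicit, bounded for bounded argument and of order $1/(\eps p_z)$ for large $p_z$---this is the anisotropic $(\eps,\eps^2)$-scaling coming from the group dilations.

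Next I would work out the sampling density. Since $\omega=e^h\mathcal{P}$ and $\mathcal{P}$ is a constant multiple of Lebesgue measure, $\mu^{c\eps}_0(\lambda)$ is proportional to $e^{h(\gamma_\lambda(c\eps))}$ times the absolute value of the $(2d+1)\times(2d+1)$ determinant whose columns are $\dot\gamma_\lambda(c\eps)$ and the $2d$ derivatives of $\gamma_\lambda(c\eps)$ along $\cyl_0$; expanding this determinant along its ``$z$-row'' and extracting the scaling yields $\mu^{c\eps}_0(\lambda)\propto e^{h(\gamma_\lambda(c\eps))}\bigl|\sum_{\ell}|p_\ell|^2\,g_\ell(c\eps p_z)\bigr|$ with the $g_\ell$ of the statement, and $g_\ell(y)=O(|y|^{-(2d+2)})$ at infinity---precisely what makes $\int_{\cylsmall_0}\mu^{c\eps}_0<\infty$ and concentrates the mass on $|p_z|\lesssim1/\eps$. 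Since the $x$-displacement at time $c\eps$ is $c\eps\,f(c\eps p_z;p)$ while the $z$-displacement is $O(\eps^2)$, one has $e^{h(\gamma_\lambda(c\eps))}=e^{h(0)}(1+c\eps\sum_if_i(c\eps p_z;p)X_i(h)(0)+O(\eps^2))$, so $\mu^{c\eps}_0$ splits into an even leading part plus an $O(c\eps)$, $h$-dependent correction. I would then Taylor-expand $\phi(\gamma_\lambda(\eps))-\phi(0)=\eps\sum_if_i\,\partial_{x_i}\phi+\eps^2(k\,\partial_z\phi+\tfrac12\sum_{i,j}f_if_j\,\partial_{x_i}\partial_{x_j}\phi)+O(\eps^3)$, multiply by $\mu^{c\eps}_0$, integrate over $\cyl_0$, and divide by $\dt=\eps^2/(4d)$: the $O(\eps)$ term against the even density and the $\partial_z\phi$ term drop by the parity $p\mapsto-p$; the first-order part of the limit arises from pairing the linear $\phi$-term with the $h$-linearization of the density, producing $2c\sum_i\sigma_i(c)(X_{2i-1}(h)X_{2i-1}+X_{2i}(h)X_{2i})$, and the $\eps^2$-Hessian term against the even density produces $\sum_i\sigma_i(c)(X_{2i-1}^2+X_{2i}^2)$; in each case the $\mathbb{S}^{2d-1}$-integral reduces, by rotational invariance within each $2$-plane (which also annihilates the would-be antisymmetric first-order contributions after integration in $p_z$, their integrands being odd), to the one-dimensional integrals of $|g_\ell(cp_z)|$ against $\sinc^2(\alpha_ip_z/2)$, with the weight $(1+\delta_{\ell i})$ and the constant $d/(d+1)$ coming from the standard sphere moments. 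Having obtained $L^\eps_{\omega,c}\to L_{\omega,c}$ uniformly on compacts with the stated formula, and noting $\sigma_i(c)>0$ for $c\in(0,1]$ (so $L_{\omega,c}$ is a second-order operator with non-negative definite principal symbol and no zeroth-order term) and that contact Carnot groups are complete, Theorem~\ref{t:convergence} gives $P^\eps_{\omega,c}\to P_{\omega,c}$ weakly.

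The hard part will be the one the text already anticipates: $\cyl_0$ is non-compact and $\exp_0(\eps;\cdot)$ degenerates at $\eps=0$, so the compactness and normal-coordinate arguments behind Theorem~\ref{t:limit-Riemannian} are unavailable. Making the expansion above rigorous requires uniform-in-$\eps$ dominated-convergence estimates on $\cyl_0$---the Taylor remainders and the $\eps$-corrections of $\mu^{c\eps}_0$ must be dominated by a fixed integrable function and shown to vanish in the limit---which rests on the decay $g_\ell(y)=O(|y|^{-(2d+2)})$ of the Jacobian outpacing the polynomial growth in $p_z$ of $\gamma_\lambda$ and its $\lambda$-derivatives. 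The remaining work is the routine but lengthy bookkeeping turning the sphere integrals into the exact constants $\sigma_i(c)$.
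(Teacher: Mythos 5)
Your proposal follows essentially the same route as the paper's proof: explicit integration of the Hamilton equations on the Carnot group, the Jacobian determinant of $\exp_0(\eps;\cdot)$ producing the density $\propto e^{h(\gamma_\lambda(c\eps))}\bigl|\sum_\ell\|p_x^\ell\|^2 g_\ell(c\eps p_z)\bigr|$ on $\cyl_0$, Taylor expansion of $\phi$ with parity cancellations, reduction of the sphere integrals of products of quadratic forms to the constants $\sigma_i(c)$ via the identity $B(y)B(y)^*=\sinc^2(y/2)\,\mathbb{I}$ and standard moment formulas, a polynomially-bounded-remainder/dominated-convergence argument to handle the non-compact cylinder, left-invariance to pass from the origin to general $q$, and Theorem~\ref{t:convergence} for the weak convergence (the paper treats general $h$ by substituting $\tilde\phi=e^{c(h-h(q))}(\phi-\phi(q))$ into the $h=0$ computation rather than expanding the density directly, but this is the same calculation; also note the true decay of $g_\ell$ is $O(|y|^{-(2d+1)})$, not $O(|y|^{-(2d+2)})$, which still gives integrability).
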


\begin{rmk}[Heisenberg]\label{r:constantHeis}
If $\alpha_1=\ldots=\alpha_d = 1$, the functions $g_i = g$ are equal and
\begin{equation}
\sigma(c) := \sigma_i(c) = \frac{c}{\int_{\R} |g(y)| dy} \int_{\R} |g(c y)| \frac{\sin(y/2)^2}{(y/2)^2},
\end{equation}
does not depend on $i$. In general, however, $\sigma_i \neq \sigma_j$ (see Figure~\ref{f:coeff}).
\end{rmk}
\begin{figure}[h]
\includegraphics[scale=0.4]{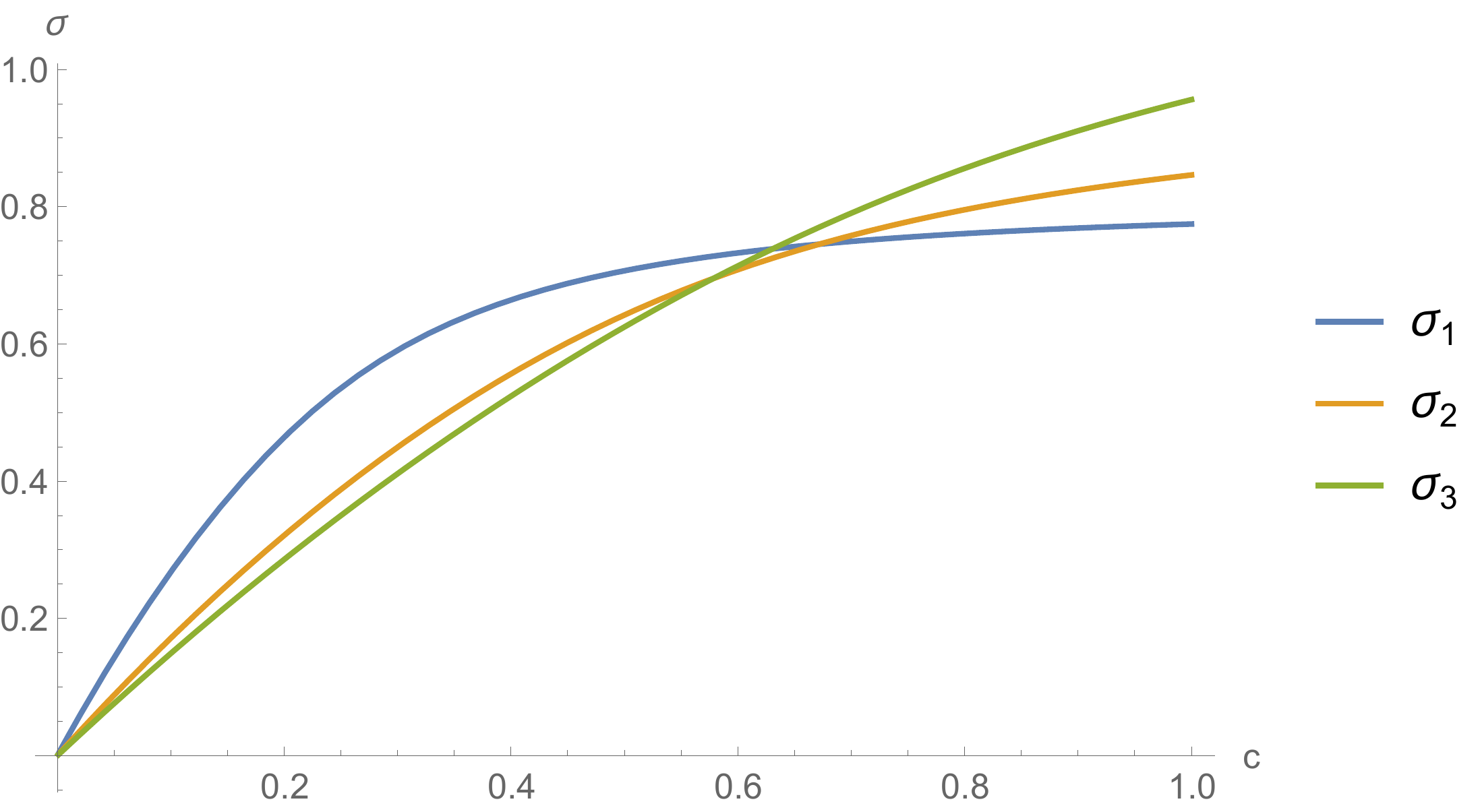}
\caption{Plots of $\sigma_i(c)$ for $d=3$ and $\alpha_1=1$, $\alpha_2 = 2$ and $\alpha_3 =3$.}\label{f:coeff}
\end{figure}

\subsubsection{An intrinsic formula}\label{Sect:IntrinsicFormula}

We rewrite the operator of Theorem \ref{t:limit-contact-carnot} in an intrinsic form. Define a new contact Carnot structure ($\R^{2d+1},\distr,\g'$) on the same distribution, by defining
\begin{equation}\label{eq:primedfields}
X'_{2i-1}:= \sqrt{\sigma_i(c)} X_{2i-1}, \qquad X'_{2i}:=\sqrt{\sigma_i(c)} X_{2i}, \qquad i=1,\ldots,d,
\end{equation}
to be a new orthonormal frame. Observe that this construction does not depend on the choice of $\omega$. Let $\grad$ and $\grad'$ denote the horizontal gradients w.r.t.\ the sub-Riemannian metrics $\g$ and $\g'$, respectively. Then the following is a direct consequence of Theorem \ref{t:limit-contact-carnot} and the definition of this ``primed'' structure.
\begin{cor}\label{cor:intrinsicformula}
The limit operator $L_{\omega,c}$ of Theorem \ref{t:limit-contact-carnot} is
\begin{equation}
L_{\omega,c} = \dive_\omega\circ \grad' + (2c-1) \grad'(h),
\end{equation}
where $\grad'(h) =\sum_{i=1}^{2d} X'_i(h)X'_i$ is understood as a derivation.
\end{cor}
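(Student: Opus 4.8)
The plan is to prove the identity by directly unraveling both sides in terms of the rescaled frame $X'_1,\dots,X'_{2d}$, exploiting two facts: that each rescaling factor $\sigma_i(c)$ is a \emph{constant}, and that $\dive_{\mathcal P}(X_i)=0$ on a contact Carnot group, so that $\dive_\omega(X_i)=X_i(h)$ (using $\dive_{e^h\mathcal P}(X)=X(h)+\dive_{\mathcal P}(X)$). First I would record the elementary algebra of the primed frame. Since $\sigma_i(c)\in\R$ is point-independent, for $j\in\{2i-1,2i\}$ we have $(X'_j)^2=\sqrt{\sigma_i(c)}\,X_j\big(\sqrt{\sigma_i(c)}\,X_j\big)=\sigma_i(c)X_j^2$, so $\sum_{j=1}^{2d}(X'_j)^2=\sum_{i=1}^d\sigma_i(c)(X_{2i-1}^2+X_{2i}^2)$ is exactly the principal part appearing in Theorem~\ref{t:limit-contact-carnot}. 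Similarly $X'_j(h)X'_j=\sigma_i(c)X_j(h)X_j$, hence $\grad'(h)=\sum_{j=1}^{2d}X'_j(h)X'_j=\sum_{i=1}^d\sigma_i(c)\big(X_{2i-1}(h)X_{2i-1}+X_{2i}(h)X_{2i}\big)$.

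Next I would compute $\dive_\omega\circ\grad'$ from the local formula $\dive_\omega\circ\grad'=\sum_{j=1}^{2d}(X'_j)^2+\dive_\omega(X'_j)X'_j$, which is valid because $X'_1,\dots,X'_{2d}$ is a $\g'$-orthonormal frame. Using $\dive_\omega(fX)=f\dive_\omega(X)+X(f)$ with the constant $f=\sqrt{\sigma_i(c)}$ and $\dive_\omega(X_j)=X_j(h)$, one gets $\dive_\omega(X'_j)X'_j=\sigma_i(c)X_j(h)X_j$, so $\dive_\omega\circ\grad'$ equals the principal part plus exactly $\grad'(h)$. Therefore $\dive_\omega\circ\grad'+(2c-1)\grad'(h)$ has the correct principal part and first-order term $(1+(2c-1))\grad'(h)=2c\,\grad'(h)=2c\sum_{i=1}^d\sigma_i(c)\big(X_{2i-1}(h)X_{2i-1}+X_{2i}(h)X_{2i}\big)$, matching the formula for $L_{\omega,c}$ in Theorem~\ref{t:limit-contact-carnot}.

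This is pure bookkeeping, so there is no genuine obstacle; the only point that deserves a word is structural rather than computational. For $\g'$ to be a bona fide sub-Riemannian metric (so that $\grad'$ is literally a horizontal gradient) one needs $\sigma_i(c)>0$ for all $i$, which is implicit in the definition~\eqref{eq:primedfields}. In the computation above the factors $\sqrt{\sigma_i(c)}$ enter only through their squares, so the operator identity holds as stated regardless; but the interpretation of the right-hand side as $\dive_\omega\circ\grad'$ for an honest sub-Riemannian structure on $\distr$ rests on this positivity, which I would note explicitly when invoking Theorem~\ref{t:limit-contact-carnot} and~\eqref{eq:primedfields}.
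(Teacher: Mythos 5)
Your computation is correct and is precisely the verification the paper leaves implicit when it calls the corollary ``a direct consequence of Theorem~\ref{t:limit-contact-carnot} and the definition of this primed structure'': you expand $\dive_\omega\circ\grad'$ in the $\g'$-orthonormal frame, use that the $\sigma_i(c)$ are constants and that $\dive_{\mathcal P}(X_i)=0$ on a contact Carnot group, and match terms. Your closing remark about needing $\sigma_i(c)>0$ for $\g'$ to be an honest sub-Riemannian metric is a sensible observation, consistent with the paper's implicit assumption.
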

Again $L_{\omega,c}$ is essentially self-adjoint in $L^2(M,\omega)$ if and only if $c=1/2$ or $\omega = \mathcal{P}$ (i.e.\ $h$ is constant). In both cases it is a ``divergence of the gradient'', i.e.\ a well-defined, intrinsic and symmetric operator but, surprisingly, not the expected one. In particular, the behavior of associated heat kernel (e.g.\ its asymptotics) depends not on the original sub-Riemannian metric $\g$, but on the new one $\g'$.

\subsubsection{On the symbol}\label{Sect:Symbol}
We recall that the (principal) symbol of a smooth differential operator $D$ on a smooth manifold $M$ can be seen as a function $\Sigma(D) : T^*M \to \R$. The symbol associated with the sub-Riemannian geodesic random walk with volume sampling is
\begin{equation}
\Sigma(L_{\omega,c})(\lambda) = \sum_{i=1}^d \sigma_i(c) (\langle \lambda, X_{2i-1}\rangle^2 + \langle \lambda, X_{2i}\rangle^2), \qquad \lambda \in T^*M,
\end{equation}
and does not depend on $\omega$. On the other hand, the principal symbol of $\Delta_\omega$ is
\begin{equation}
\Sigma(\Delta_{\omega})(\lambda) = \sum_{i=1}^{2d} \langle \lambda,X_i\rangle^2 = 2H(\lambda), \qquad \lambda \in T^*M.
\end{equation}
In general, the two symbols are different, for any value of the sampling ratio $c>0$. The reason behind this discrepancy is that the family of operators $L_{\omega,c}^\eps$ keeps track of the different eigenspace associated with the generically different singular values $\alpha_i \neq \alpha_j$, through the Jacobian determinant of the exponential map.

\subsection{Alternative construction for the sub-Riemannian random walk}\label{s:altconstr}

An alternative construction of the sub-Riemannian random walk of Section~\ref{s:SR-RW} is the following. For any fixed step length $\eps >0$, one follows only minimizing geodesics segments, that is $\lambda \in \cyl^\eps_q$, as defined in \eqref{eq:cotingj}. In other words, for $\eps >0$, and $c \in (0,1]$, we consider the restriction of $\mu_q^{c\eps}$ to $\cyl_q^\eps$ (which we normalize in such a way that $\int_{\cyl_q^\eps} \mu_q^{c\eps} = 1$).
\begin{rmk}
In the the original construction the endpoints of the first step of the walk lie on the \emph{front of radius $\eps$ centered at $q$}, that is the set $F_q(\eps)=\exp_q(\eps;\cyl_q)$. With this alternative construction, the endpoints lie on the \emph{metric sphere of radius $\eps$ centered at $q$}, that is the set $S_q(\eps)=\exp_q(\eps;\cyl_q^\eps)$. 
\end{rmk}
\begin{rmk}
In the Riemannian setting, locally, for $\eps>0$ sufficiently small, all geodesics starting from $q$ are optimal at least up to length $\eps$, and the two constructions coincide.
\end{rmk}

This construction requires the explicit knowledge of $\cyl_q^\eps$, which is known for contact Carnot groups \cite{ABB-Hausdorff}. We obtain the following convergence result, whose proof is similar to that of Theorem~\ref{t:limit-contact-carnot}, and thus omitted.
\begin{theorem}\label{t:limit-contact-carnot-alternative} 
Consider the geodesic sub-Riemannian random walk with volume sampling, with volume $\omega$ and ratio $c$, defined according to the alternative construction. Then the statement of Theorem~\ref{t:limit-contact-carnot} holds, replacing the constants $\sigma_i(c)\in \R$ with
\begin{equation}
\sigma_{i}^{alt}(c) := \frac{c d  }{(d+1)\sum_{j=1}^d \int_{-2\pi/\alpha_d c}^{2\pi/\alpha_dc} |g_j(y)| dy} \sum_{\ell=1}^d (1+\delta_{\ell i}) \int_{-2\pi/\alpha_d}^{2\pi/\alpha_d}  |g_\ell(c p_z)|\frac{\sin(\tfrac{\alpha_i p_z}{2})^2}{(\alpha_i p_z/2)^2} dp_z,
\end{equation}
for $i=1,\ldots,d$. We call $L_{\omega,c}^{alt}$ the corresponding operator.
\end{theorem}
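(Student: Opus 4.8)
The plan is to repeat the proof of Theorem~\ref{t:limit-contact-carnot} essentially word for word, the only modification being that every integral over the unit cylinder $\cyl_q$ is replaced by an integral over the subset $\cyl_q^\eps$ of \eqref{eq:cotingj} (covectors whose geodesic is minimizing up to time $\eps$), and the constant $N(q,\eps)$ is chosen so that $\int_{\cyl_q^\eps}\mu_q^{c\eps}=1$. The two things to carry out are therefore: first, describe $\cyl_q^\eps$ explicitly for contact Carnot groups; second, rerun the rescaling-and-limit argument of Theorem~\ref{t:limit-contact-carnot} with this smaller, $\eps$-dependent domain.

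For the first point, recall that the cut time, cut locus, and metric spheres of contact Carnot groups are explicitly known \cite{ABB-Hausdorff}. Parametrizing $\cyl_q\cong\mathbb{S}^{2d-1}\times\R$ by the horizontal unit momentum $p_h\in\mathbb{S}^{2d-1}$ and the vertical momentum $p_z\in\R$, the cut time of $\gamma_\lambda$ depends only on $p_z$, with $t_c(\lambda)=2\pi/(\alpha_d|p_z|)$, the fastest-rotating block $\alpha_d$ governing the first conjugate point. Consequently $\cyl_q^\eps=\{(p_h,p_z):|p_z|\le 2\pi/(\alpha_d\eps)\}$ is a slab of vertical width $\sim 1/\eps$ that exhausts $\cyl_q$ as $\eps\to 0$.

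For the second point, recall the mechanism of the proof of Theorem~\ref{t:limit-contact-carnot}: one inserts the explicit exponential map and the explicit density $\mu_q^{c\eps}$ (built from the Jacobian of $\exp_q(c\eps;\cdot)$ and the contraction $\iota_{\dot\gamma}\omega$), performs a parabolic rescaling of $p_z$, Taylor-expands $\phi(\exp_q(\eps;\lambda))-\phi(q)$ (its horizontal second-order part producing the weight $\sin(\alpha_i p_z/2)^2/(\alpha_i p_z/2)^2$, which comes from the endpoint at time $\eps$, and the average over $p_h\in\mathbb{S}^{2d-1}$ producing the combinatorial factors $1+\delta_{\ell i}$), and finally lets $\eps\to 0$. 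Restricting the covector to $\cyl_q^\eps$ only truncates the resulting $p_z$-integrals — both the one in the numerator of $L^\eps_{\omega,c}\phi$ and the one producing the limit of $N(q,\eps)$ — to bounded intervals. Since the integrands are unchanged and are absolutely integrable on $\R$, dominated convergence yields convergence of the truncated integrals to the same integrands over the limiting intervals; tracking the rescaling (the geodesic is followed up to the endpoint time $\eps$, while the density is sampled at the intermediate time $c\eps$) turns the bound $|p_z|\le 2\pi/(\alpha_d\eps)$ into the interval $(-2\pi/\alpha_d,2\pi/\alpha_d)$ for the numerator and $(-2\pi/(\alpha_d c),2\pi/(\alpha_d c))$ for the normalization, i.e.\ exactly $\sigma_i^{alt}(c)$ in place of $\sigma_i(c)$, and hence $L^\eps_{\omega,c}\to L^{alt}_{\omega,c}$. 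Weak convergence $P^\eps_{\omega,c}\to P_{\omega,c}$ then follows from Theorem~\ref{t:convergence}: the steps are still geodesic segments of length $\eps$ (so Definition~\ref{Def:Walk} holds with $\kappa=1$), the $\sigma_i^{alt}(c)$ are non-negative (integrals of absolute values against a non-negative weight), so $L^{alt}_{\omega,c}$ has non-negative definite principal symbol and no zeroth-order term, and on the Lie group $G$ the associated diffusion does not explode.

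The main obstacle is the bookkeeping in the second step: the domain $\cyl_q^\eps$ is itself $\eps$-dependent and, in the unrescaled coordinates, grows to fill $\cyl_q$ as $\eps\to 0$, while the parabolic rescaling of $p_z$ is also $\eps$-dependent. One must check that these two dependences combine so that, in the rescaled coordinates, $\cyl_q^\eps$ converges to the fixed bounded slab and no boundary contributions survive in the limit; this interplay (together with the fact that the endpoint-time and sampling-time rescalings differ) is precisely what yields the two distinct limiting intervals above. A secondary point is that one needs the cut locus of a general contact Carnot group, with possibly distinct singular values $\alpha_1\le\cdots\le\alpha_d$, rather than only of the Heisenberg group; but this is exactly the content of \cite{ABB-Hausdorff}.
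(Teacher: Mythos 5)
Your strategy is exactly the paper's intended one: the paper omits the proof of Theorem~\ref{t:limit-contact-carnot-alternative}, stating only that it is ``similar to that of Theorem~\ref{t:limit-contact-carnot}'', and your plan --- restrict every integral to $\cyl_q^\eps$ using the explicit cut time $t_c(\lambda)=2\pi/(\alpha_d|p_z|)$ from \cite{ABB-Hausdorff}, renormalize, and track the truncation through the two rescalings --- is precisely that computation, including the correct observation that the endpoint-time rescaling gives the interval $(-2\pi/\alpha_d,2\pi/\alpha_d)$ in the numerator.

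There is, however, one concrete slip in the bookkeeping of the normalization. The normalization integral is $\int_{|p_z|\le 2\pi/(\alpha_d\eps)}|g_j(c\eps\, p_z)|\,dp_z$, and the substitution $y=c\eps\, p_z$ sends the bound $|p_z|\le 2\pi/(\alpha_d\eps)$ to $|y|\le 2\pi c/\alpha_d$, not to $|y|\le 2\pi/(\alpha_d c)$ as you wrote: the constraint shrinks with $c$, it does not grow. (The paper's notation $\int_{-2\pi/\alpha_d c}^{2\pi/\alpha_d c}$ is to be read as $(2\pi/\alpha_d)\cdot c$, as confirmed by the $3$D Heisenberg formula $\int_{-2\pi c}^{2\pi c}|g(y)|\,dy$.) The distinction is not cosmetic: with your interval the denominator would tend to the full-line integral $\sum_j\int_{\R}|g_j|$ as $c\to 0^+$, the overall prefactor $c$ would then force $\sigma_i^{alt}(c)\to 0$, and the whole point of the alternative construction --- the nonzero limit operator $L^{alt}_{\omega,0}=\dive_{\mathcal P}\circ\grad''$ --- would be lost. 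With the correct interval $|y|\le 2\pi c/\alpha_d$ the denominator is $O(c)$ (since $g_j(0)\ne 0$), cancelling the prefactor and producing the positive limits $\sigma_i^{alt}(0)$ recorded in the paper. With this correction your argument goes through as stated.
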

\begin{rmk}[The case $c = 0$]
In the Riemannian setting the case $c=0$ represents the geodesic random walk with no volume sampling of Section~\ref{s:ito-intro}. In fact, by Theorem~\ref{t:limit-Riemannian},
\begin{equation}
L_{\omega,0} = \lim_{c \to 0^+} L_{\omega,c} = \dive_{\mathcal{R}} \circ \grad, \qquad \text{(Riemannian geodesic RW)},
\end{equation}
is the Laplace-Beltrami operator, for any choice of $\omega$. In the sub-Riemannian setting the case $c=0$ is not defined, but we can still consider the limit for $c \to 0^+$ of the operator. In the original construction, $\lim_{c\to 0^+} \sigma_i(c) = 0$ and by Theorem~\ref{t:limit-contact-carnot} we have:
\begin{equation}
\lim_{c \to 0^+} L_{\omega,c} = 0, \qquad \text{(sub-Riemannian geodesic RW)}.
\end{equation} 
For the alternative sub-Riemannian geodesic random walk discussed above, we have:
\begin{equation}
\lim_{c\to 0^+} \sigma_i^{alt}(c) = \frac{d}{4\pi(d+1)} \left(1 + \frac{\alpha_i^2}{\sum_{\ell=1}^d \alpha_\ell^2}\right) \int_{-2\pi}^{2\pi} \sinc\left(\frac{\alpha_i x}{2\alpha_d}\right)^2 dx, \qquad \forall i = 1,\ldots,d.
\end{equation}
As in Section~\ref{Sect:IntrinsicFormula}, we can define a new metric $\g''$, on the same distribution, such that
\begin{equation}
X_{2i-1}'':= \sqrt{\sigma_i^{alt}(0)} X_{2i-1}, \qquad X_{2i}'':= \sqrt{\sigma_i^{alt}(0)} X_{2i}, \qquad i=1,\ldots,d
\end{equation}
are a global orthonormal frame, where $\sigma_i^{alt}(0):=\lim_{c \to 0^+} \sigma_i^{alt}(c) > 0$. Then, by Theorem~\ref{t:limit-contact-carnot-alternative} we obtain a formula similar to the one of Corollary~\ref{cor:intrinsicformula}:
\begin{equation}
L_{\omega,0}^{alt}:=\lim_{c \to 0^+} L_{\omega,c}^{alt} = \dive_{\mathcal{P}} \circ \grad'', \qquad \text{(alternative sub-Riemannian geodesic RW)}.
\end{equation}
where $\grad''$ is the horizontal gradient computed w.r.t. $\g''$. Unless all the $\alpha_i$ are equal, in general $\sigma_i^{alt}(0) \neq \sigma_j^{alt}(0)$ and $\grad''$ is not proportional to $\grad$.
Notice that $L_{\omega,0}^{alt}$ is a non-zero operator, , symmetric w.r.t.\ Popp volume, and it does not depend on the choice of the initial volume $\omega$. This makes $L_{\omega,0}^{alt}$ (and the corresponding diffusion) an intriguing candidate for an intrinsic sub-Laplacian (and an intrinsic Brownian motion) for contact Carnot groups. 

For the Heisenberg group $\mathbb{H}_{2d+1}$, where $\alpha_i = 1$ for all $i$, by Theorem~\ref{t:limit-contact-Heis} we have:
\begin{equation}
L_{\omega,0}^{alt} = \sigma^{alt}(0) \dive_{\mathcal{P}} \circ \grad, \quad \text{where} \quad \sigma^{alt}(0) = \frac{1}{4\pi}\int_{-2\pi}^{2\pi} \sinc(x)^2 dx.
\end{equation}
\end{rmk}

\begin{rmk}[Signed measures]
A further alternative construction is one in which we remove the absolute value in the definition~\ref{d:mesIto} of $\mu_q^\eps$ on $\cyl_q$. In this case we lose the probabilistic interpretation, and we deal with a signed measure; still, we have an analogue of Theorem~\ref{t:limit-contact-carnot} for the operators themselves, replacing the constants $\sigma_1(c),\ldots,\sigma_d(c)$ with
\begin{equation}
\widetilde{\sigma}_i(c)= \frac{c d}{(d+1)\sum_{j=1}^d \int_{-\infty}^{+\infty} g_j(y) dy} \sum_{\ell=1}^d (1+\delta_{\ell i}) \int_{-\infty}^{+\infty}  g_\ell(c p_z)\frac{\sin(\tfrac{\alpha_i p_z}{2})^2}{(\alpha_i p_z/2)^2} dp_z.
\end{equation}
We observe the same qualitative behavior as the initial construction highlighted in Section~\ref{Sect:IntrinsicFormula} and~\ref{Sect:Symbol}.
\end{rmk}

\subsection{The 3D Heisenberg group}

We give more details for the sub-Riemannian geodesic random walk in the 3D Heisenberg group. This is a contact Carnot group with $d=1$ and $\alpha_1=1$. The identity of the group is $(x,z) = 0$. In coordinates $(p_x,p_z) \in T_0^*M$ we have
\begin{align}
\cyl_0 & = \{(p_x,p_z) \in \R^2 \times \R \mid \|p_x\|^2 = 1\},\\
\cyl_0^\eps & = \{(p_x,p_z) \in \R^2 \times \R \mid \|p_x\|^2 = 1, \quad |p_z|\leq 2\pi/\eps \}.
\end{align}
see \cite{ABB-Hausdorff}. For instance, we set $\omega$ equal to the Lebesgue volume. From the proof of Theorem~\ref{t:limit-contact-carnot}, we obtain, in cylindrical coordinates $(\theta,p_z) \in \mathbb{S}^1 \times \R \simeq T_0^*M$
\begin{equation}
\mu_0^{c\eps} = \begin{dcases}
\frac{c\eps |g(c \eps p_z)|}{2\pi\int_{-\infty}^{\infty} |g(y)|dy}  d\theta \wedge dp_z  & \text{original construction},  \\
\frac{c\eps |g(c \eps p_z)|}{2\pi\int_{-2\pi c}^{2\pi c}  |g(y)| dy}  d\theta \wedge dp_z & \text{alternative construction}, \\
\end{dcases}
\end{equation}
where
\begin{equation}
g(y)= \frac{\sin\left(\tfrac{y}{2}\right) \left(\tfrac{y}{2} \cos\left(\tfrac{y}{2}\right)- \sin\left(\tfrac{y}{2}\right)\right) }{(y/2)^{4}}.
\end{equation}
The normalization is determined by the conditions
\begin{equation}
\begin{cases}
\int_{\cylsmall_0} |\mu_0^{c\eps}| = 1  & \text{original construction}, \\
\int_{\cylsmall_0^\eps} |\mu_0^{c\eps}| = 1 & \text{alternative construction}.
\end{cases}
\end{equation}
The density corresponding to $\mu_0^{c\eps}$, in coordinates $(p_x,p_z)$, depends only on $p_z$. For any fixed $c>0$, the density (for either construction) spreads out  as $\eps \to 0$, and thus the probability to follow a geodesic with large $p_z$ increases (see Fig.~\ref{f:spread}).

\begin{figure}
\includegraphics[scale=1]{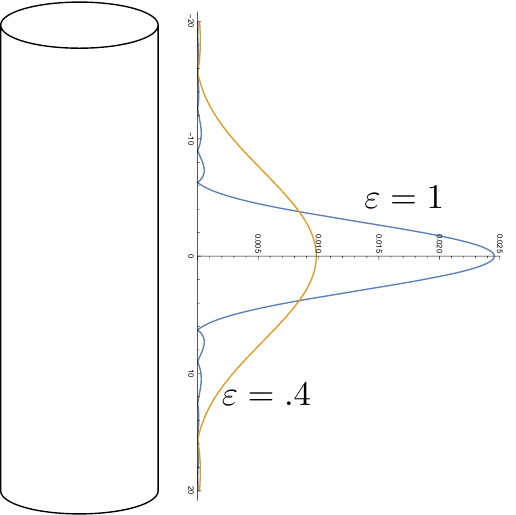}
\caption[meas]{Measures on $\cyl$ for $c=1$ in the Heisenberg group $\mathbb{H}_3$ for the original construction. Each zero corresponds to a conjugate point.}\label{f:spread}
\end{figure}


\section{Flow random walks}\label{s:RW-flow}

The main difficulties to deal with in the convergence of the sub-Riemannian geodesic random walk with volume sampling scheme were related to the non-compactness of $\cyl_q$, and the lack of a general asymptotics for $\mu_q^\eps$. To overcome these difficulties, we discuss a different class of walks. This approach is inspired by the classical integration of a Stratonovich SDE, and can be implemented on Riemannian and sub-Riemannian structures alike (the only requirement being a set of vector fields $V_1,\ldots,V_k$ on a smooth manifold $M$, and a volume $\omega$ for volume sampling).

\subsection{Stratonovich SDEs via flow random walks}  \label{s:strato-intro}

Let $M$ be a smooth $n$-dimensional manifold, and let $V_1,\ldots,V_k$ be smooth vector fields on $M$. Since SDEs are fundamentally local objects (at least in the case of smooth vector fields, where the SDE has a unique, and thus Markov, solution), we do not worry about the global behavior of the $V_i$, and thus we assume, without loss of generality, that the flow along any vector field $V=\beta_1V_1+\beta_2V_2+\cdots+\beta_kV_k$ for any constants $\beta_i$ exists for all time. Further, we can assume that there exists a Riemannian metric $\g$ on $M$ such that the $V_i$ all have bounded norm.

We consider the Stratonovich SDE
\begin{equation}\label{Eqn:StratSDE}
dq_t = \sum_{i=1}^k V_i\lp q_t\rp \circ d(\sqrt{2} w_t^i) , \qquad q_0=q,
\end{equation}
for some $q\in M$, where $w_t^1,\ldots,w_t^k$ are independent, one-dimensional Brownian motions. We recall that solving this SDE is essentially equivalent to solving the martingale problem for the operator $\sum_{i=1}^k V_i^2$. (See \cite[Chapter 5]{KaratzasShreve} for the precise relationship between solutions to SDEs and solutions to martingale problems, although in this case, because of strong uniqueness of the solution to the \eqref{Eqn:StratSDE}, the situation is relatively simple.) We also assume that the solution to \eqref{Eqn:StratSDE}, which we denote $q^0_t$, does not explode.

The sequence of random walks which we associate to \eqref{Eqn:StratSDE} is as follows. We take $\eps>0$. Consider the $k$-dimensional vector space of all linear combinations $\beta_1V_1+\beta_2V_2+\cdots+\beta_kV_k$. Then we can naturally identify $\mS^{k-1}$ with the set $\sum_{i=1}^k \beta_i^2=1$, and thus choose a $k$-tuple $(\beta_1,\ldots,\beta_k)$ from the sphere according to the uniform probability measure. This gives a random linear combination $V=\beta_1V_1+\beta_2V_2+\cdots+\beta_kV_k$. Now, starting from $q$, we flow along the vector field $\frac{2k}{\eps} V$ for time $\dt= \eps^2/(2k)$, traveling a curve of length $\eps\|V\|_{\g}$. This determines the first step of a random walk (and the measure $\Pi^{\eps}_q$). Determining each additional step in the same way produces a family of random walks $q^{\eps}_t$, that we call the \emph{flow random walk} at scale $\eps$ associated with the SDE \eqref{Eqn:StratSDE}.

We associate to each process $q^{\eps}_t$ and $q^0_t$ the corresponding probability measures $P^{\eps}$ and $P^0$ on $\Omega(M)$. The operator induced by the walks converges to the sum-of-squares operator $\sum_{i=1}^k V_i^2$, uniformly on compact sets, by smoothness. Then, by Theorem \ref{t:convergence}, the measures $P^{\eps} \to  P^0$ weakly as $\eps\rightarrow 0$. Note that since this holds for any metric $\g$ as described above, this is really a statement about processes on $M$ as a smooth manifold, and the occurrence of $\g$ is just an artifact of the formalism of Theorem~\ref{t:convergence}. Also, we again note that in this construction, it would be possible to allow $k>n$.

The relationship of Stratonovich integration to ODEs, and thus flows of vector fields, is not new. Approximating the solution to a Stratonovich SDE by an ODE driven by an approximation to Brownian motion is considered in \cite{Doss} and \cite{Sussmann}. Here, we have tried to give a simple, random walk approach emphasizing the geometry of the situation. Nonetheless, because $M$ is locally diffeomorphic to $\bR^n$ (or a ball around the origin in $\bR^n$, depending on one's preferences) and the entire construction is preserved by diffeomorphism, there is nothing particularly geometric about the above, except perhaps the observation that the construction is coordinate independent.

\subsection{Volume sampling through the flow}

The random walk defined in the previous section, which depends only on the choice of $k$ smooth vector fields $V_1,\ldots,V_k$ fits in the general class of walks of Section~\ref{s:convergence}. Moreover, the construction can be generalized to include a volume sampling technique, as we now describe.

Here $V_1,\ldots,V_k$ are a fixed set of global orthonormal fields of a complete (sub-)Rieman\-nian structure, and for this reason we rename them $X_1,\ldots,X_k$. We will discuss in which cases the limit diffusion does not depend on this choice. Notice that, as a consequence of our assumption on completeness of the (sub-)Riemannian structure, any linear combination of the $X_i$'s with constant coefficients is complete.

\begin{rmk}
If $TM$ is not trivial, clearly such a global frame does not exist. To overcome this difficulty, one can consider a locally finite cover $\{U_i\}_{i \in I}$, each one equipped with a preferred local orthonormal frame. For each $q \in M$, there exists a finite set of indices $I_q$ such that $q \cap U_i \neq \emptyset$. Hence, one can easily generalize the forthcoming construction by choosing with uniform probability one of the finite number of available local orthonormal frames available at $q$. Another possibility is to consider an overdetermined set $X_1,\ldots,X_N$ of global vector fields generating the same (sub-)Rieman\-nian structure, as explained in \cite[Sec. 3.1.4]{nostrolibro}. Either choice leads to equivalent random walks hence, for simplicity, we restrict in the following to the case of trivial $TM$.
\end{rmk}

\begin{definition}
For any $q \in M$, and $\eps>0$, the \emph{endpoint map} $E_{q,\eps} : \R^k \to M$ gives the point $E_{q,\eps}(u)$ at time $\eps$ of integral curve of the vector field $X_u:=\sum_{i=1}^k u_i X_i$ starting from $q \in M$. Moreover, let $S_{q,\eps}:=E_{q,\eps}(\mathbb{S}^{k-1})$.
\end{definition}
\begin{rmk}
For small $\eps\geq 0$, $E_{q,\eps} : \mathbb{S}^{k-1} \to S_{q,\eps}$ is a diffeomorphism, and for any unit $u \in \mathbb{S}^{k-1}$, note that $\gamma_u(\eps + \tau):=E_{q,\eps+\tau}(u)$ is a segment of the flow line transverse to $S_{q,\eps}$.
\end{rmk}
The next step is to induce a probability measure $\mu_{q}^\eps$ on $\mathbb{S}^{k-1}$ via volume sampling through the endpoint map. We start with the Riemannian case.

\subsection{Flow random walks with volume sampling in the Riemannian setting}

In this case $k=n$, and the specification of the volume sampling scheme is quite natural.

\begin{definition}\label{d:fakeRiem}
Let $(M,\g)$ be a Riemannian manifold. For any $q \in M$ and $\eps >0$, we define the family of densities on $\mu_q^\eps$ on $\mathbb{S}^{n-1}$
\begin{equation}
\mu_q^\eps(u):= \frac{1}{N(q,\eps)} \left\lvert E_{q,\eps}^* \circ \iota_{\dot\gamma_u(\eps)} \omega)(u)\right\rvert, \qquad \forall u \in \mathbb{S}^{n-1}.
\end{equation}
where $N(q,\eps)$ is fixed by the condition $\int_{\mathbb{S}^{n-1}}\mu_q^\eps = 1$. For $\eps =0$, we set $\mu_q^0$ the standard normalized density on $\mathbb{S}^{n-1}$.
\end{definition}
Then, we define a random walk by choosing $u \in \mathbb{S}^{k-1}$ according to $\mu_q^\eps$, and following the corresponding integral curve. That is, for $\eps > 0$
\begin{equation}\label{eq:process-fakeriem}
r_{(i+1)\dt,c}^\eps:=E_{r_{i\dt,c}^\eps,\eps}(u), \qquad u \in \mathbb{S}^{n-1} \text{ chosen with probability } \mu_q^{c\eps},
\end{equation}
where we have also introduced the parameter $c \in [0,1]$ for the volume sampling. This class of walks includes the one described in the previous section (by setting $c=0$).

Let $P_{\omega,c}^\eps$ be the probability measure on the space of continuous paths on $M$ associated with $r_{t,c}^\eps$ and consider the associated family of operators that, in this case, is
\begin{equation}\label{eq:operator-fakeriem}
(L_{\omega,c}^\eps \phi)(q) := \frac{1}{ \dt}\int_{\mathbb{S}^{n-1}} [\phi(E_{q,\eps}(u))-\phi(q)] \mu_q^{c\eps}(u), \qquad \forall q \in M,
\end{equation}
for any $\phi \in C^\infty(M)$.
\begin{theorem}\label{t:limit-Riemannian-fake}
Let $(M,\g)$ be a complete Riemannian manifold and $X_1,\ldots,X_n$ be a global set of orthonormal vector fields. Let $c \in [0,1]$ and $\omega = e^h \mathcal{R}$ be a fixed volume on $M$, for some $h \in C^\infty(M)$. Then $L_{c,\omega}^\eps \to L_{c,\omega}$, where
\begin{align}
L_{\omega,c} = \Delta_{\omega} + c \grad(h) + (c-1) \sum_{i=1}^n \div_\omega(X_i)X_i.
\end{align}
Moreover $P_{\omega,c}^\eps \to P_{\omega,c}$ weakly, where $P_{\omega,c}$ is the law of the process associated with $L_{\omega,c}$ (which we assume does not explode).
\end{theorem}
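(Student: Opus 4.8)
The plan is to compute $\lim_{\eps\to0}L^\eps_{\omega,c}$ directly, by Taylor expansion, and then invoke Theorem~\ref{t:convergence} for the statement about the processes. Throughout, work near a fixed $q\in M$; this suffices since the assertion $L^\eps_{\omega,c}\to L_{\omega,c}$ is local and all expansions below are uniform in $u\in\mathbb{S}^{n-1}$ and locally uniform in $q$. First I would fix Riemannian normal coordinates $x^1,\dots,x^n$ centered at $q$, chosen (by an orthogonal rotation) so that $X_i(q)=\partial_i|_q$; thus $g_{ij}(q)=\delta_{ij}$ and $\partial_kg_{ij}(q)=0$. Writing $X_u=\sum_iu_iX_i$ and $\gamma_u(s)=E_{q,s}(u)$, a Taylor expansion of the flow of $X_u$ gives $\gamma_u(s)^j=s\,u^j+\tfrac{s^2}{2}(X_uX_u)^j(q)+O(s^3)$ (the quadratic term is generally nonzero, since integral curves of $X_u$ are not geodesics), whence for any $\phi\in C^\infty(M)$
\[
\phi(E_{q,\eps}(u))-\phi(q)=\eps\,(X_u\phi)(q)+\tfrac{\eps^2}{2}(X_u^2\phi)(q)+O(\eps^3),\qquad X_u^2\phi=\sum_{i,l}u_iu_l\,X_iX_l\phi .
\]

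The crucial ingredient is the asymptotics of the sampling density: I claim that for small $s>0$,
\[
\mu_q^{s}(u)=\frac{1}{|\mathbb{S}^{n-1}|}\bigl(1+s\,\nu_q(u)+O(s^2)\bigr)\,\sigma_0(u),
\]
where $\sigma_0$ is the standard density on $\mathbb{S}^{n-1}$ and $\nu_q$ is an explicit polynomial in $u$ of degree $\le3$ (only its linear and cubic parts will matter) whose coefficients involve $dh(q)$ and the first derivatives $\partial_kX_i^j(q)$. To prove this I would expand $E_{q,s}^{*}\iota_{\dot\gamma_u(s)}\omega$ in the normal coordinates: its leading term is $s^{n-1}e^{h(q)}\sigma_0(u)$ (using orthonormality to identify $|\det(X^j_i(q))|=(\det g(q))^{-1/2}$), and the next term, of order $s^{n}$, comes from three sources — the variation of the density $e^h\sqrt{\det g}$ along $\gamma_u$, the second-order jet of the flow in the radial factor $\dot\gamma_u(s)$, and that same jet in the $n-1$ tangential factors — producing $\nu_q$ after normalizing by $N(q,s)=\int_{\mathbb{S}^{n-1}}(\cdots)$. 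A conceptual alternative, which organizes this cleanly, is to note that $\Psi(s,u)=\gamma_u(s)$ is a diffeomorphism of $(0,\eps_0)\times\mathbb{S}^{n-1}$ onto a punctured neighborhood of $q$, that $E_{q,s}^{*}\iota_{\dot\gamma_u(s)}\omega=\iota_{\partial_s}\Psi^{*}\omega$, and that, letting $Y$ be the vector field with $Y\circ\Psi=\Psi_{*}\partial_s$, the relation between $\partial_s$ and $Y$ yields $\partial_s\log(\text{Jacobian density of }\Psi^{*}\omega)=(\div_\omega Y)\circ\Psi$; since $\div_\omega Y$ behaves like $\tfrac{n-1}{s}+O(1)$ near $q$, its $O(1)$-term gives $\nu_q(u)$ directly.

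Next I would assemble the limit. Inserting the two expansions (with the density sampled at $s=c\eps$) into $(L^\eps_{\omega,c}\phi)(q)=\tfrac{2n}{\eps^{2}}\int_{\mathbb{S}^{n-1}}[\,\cdots\,]\mu_q^{c\eps}$ and using the moment identities $\int u_i\sigma_0=0$, $\int u_iu_l\sigma_0=\tfrac{|\mathbb{S}^{n-1}|}{n}\delta_{il}$, and $\int u_iu_ju_ku_l\sigma_0=\tfrac{|\mathbb{S}^{n-1}|}{n(n+2)}(\delta_{ij}\delta_{kl}+\delta_{ik}\delta_{jl}+\delta_{il}\delta_{jk})$: the $O(\eps)$ term vanishes; the $\tfrac{\eps^2}{2}X_u^2\phi$ term against the leading density contributes $\sum_iX_i^2\phi$ in the limit (this is exactly where $\dt=\eps^2/(2n)$ is used), giving the principal part together with its first-order piece; and the cross term $\eps X_u\phi\cdot c\eps\,\nu_q(u)$ is $O(\eps^2)$ and, after the moment integrals and an algebraic simplification of $\nu_q$ (here the normal-coordinate relation $\partial_kX^j_i(q)+\partial_kX^i_j(q)=0$ is what collapses the cubic contribution and rewrites the remainder intrinsically), produces the first-order operator $c\sum_i\bigl(\div_{\mathcal R}(X_i)+2X_i(h)\bigr)X_i\phi=c\sum_i\div_\omega(X_i)X_i\phi+c\,\grad(h)\phi$, using $\div_\omega(X_i)=\div_{\mathcal R}(X_i)+X_i(h)$ and $\grad(h)=\sum_iX_i(h)X_i$. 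All remaining terms are $O(\eps)$ and the convergence is locally uniform. Since $\Delta_\omega=\sum_iX_i^2+\sum_i\div_\omega(X_i)X_i$, rearranging gives $\lim_\eps L^\eps_{\omega,c}=\Delta_\omega+c\,\grad(h)+(c-1)\sum_i\div_\omega(X_i)X_i$, as claimed. Finally, $L_{\omega,c}$ is a smooth second-order operator with non-negative principal symbol $\lambda\mapsto\sum_i\langle\lambda,X_i\rangle^2$ and no zeroth-order term, so under the non-explosion hypothesis Theorem~\ref{t:convergence} yields $P^\eps_{\omega,c}\to P_{\omega,c}$ weakly.

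The main obstacle is the density expansion. Because $E_{q,s}$ collapses $\mathbb{S}^{n-1}$ onto $q$ as $s\to0$, one must first factor out the degenerate $s^{n-1}$ and then extract the $O(s)$ relative correction, carefully tracking the three contributions at order $s^n$ and — this is the genuinely delicate bookkeeping — simplifying the resulting polynomial $\nu_q(u)$, via the orthonormality relations for $\partial X_i$ in normal coordinates, into the intrinsic quantities $\div_\omega(X_i)$ and $X_i(h)$ so that the limit operator is manifestly coordinate-free. (The absolute value in Definition~\ref{d:fakeRiem} causes no trouble: since the flow is an honest diffeomorphism for small $s$, the Jacobian of $E_{q,s}$ has a fixed sign.)
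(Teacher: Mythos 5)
Your proposal is correct and reaches the stated operator, and its overall architecture (Taylor-expand $\phi$ along the flow of $X_u$, expand the sampling density to first order in $\eps$, integrate against sphere moments, then invoke Theorem~\ref{t:convergence} for the weak convergence) coincides with the paper's; the genuine difference is in how the density asymptotics are obtained. The paper never introduces normal coordinates: it writes $\omega=e^{h}\,\nu_1\wedge\cdots\wedge\nu_n$ in the coframe dual to the orthonormal fields, computes $d_uE_{q,\eps}$ by the variation-of-flows formula, and gets $(E^{*}_{q,\eps}\nu_i)(e_j)=\eps\,\delta_{ij}+\tfrac{\eps^{2}}{2}\nu_i([X_j,X_u])+O(\eps^{3})$, so the first-order correction to the Jacobian is $\tfrac{\eps}{2}\sum_i\nu_i([X_i,X_u])=\tfrac{\eps}{2}\dive_{\mathcal R}(X_u)$, which is \emph{linear} in $u$; consequently only first and second sphere moments are ever needed and no cubic terms appear. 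Your normal-coordinate route, with the radial/tangential splitting of $\iota_{\dot\gamma_u(s)}$, does generate the cubic contribution $\sum_{j,k,l}u_ju_ku_l\,\partial_kX_l^{j}$, and you correctly identify that the orthonormality relation $\partial_kX_i^{j}(q)+\partial_kX_j^{i}(q)=0$ kills it (and, via $\partial_kX_i^{i}(q)=0$, also lets you identify the surviving linear term with $\dive_{\mathcal R}(X_u)$, since in normal coordinates the metric density has vanishing first derivatives at $q$); the price is the fourth-moment identity and heavier bookkeeping, the gain is that everything is an explicit polynomial computation with no appeal to the endpoint-map differential. Both routes produce the same expansion $\mu_q^{c\eps}(u)=\bigl(1+\tfrac{c\eps}{2}\dive_{\mathcal R}(X_u)+c\eps\,X_u(h)+O(\eps^{2})\bigr)\,d\Omega(u)$, after which the assembly of the limit and the final rearrangement using $\dive_\omega(X_i)=\dive_{\mathcal R}(X_i)+X_i(h)$ agree with the paper's.
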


The limiting operator is not intrinsic in general, as it clearly depends on the choice of the orthonormal frame. However, thanks to this explicit formula, we have the following.
\begin{cor}\label{c:corflow1}
The operator $L_{\omega,c}$ does not depend on the choice of the orthonormal frame if and only if $c=1$. In this case
\begin{align}
L_{\omega,1} = \Delta_{\omega} +  \grad(h)  = \Delta_{e^h \omega} = \Delta_{e^{2h} \mathcal{R}} .
\end{align}
\end{cor}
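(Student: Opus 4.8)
The plan is to extract everything from the explicit formula of Theorem~\ref{t:limit-Riemannian-fake}, namely $L_{\omega,c} = \Delta_\omega + c\grad(h) + (c-1)\sum_{i=1}^n \div_\omega(X_i)X_i$. The first two summands are manifestly intrinsic: $\Delta_\omega = \div_\omega\circ\grad$ depends only on $(\g,\omega)$, and $\grad(h)$ only on $(\g,h)$, so neither involves the frame. Hence $L_{\omega,c}$ is frame-independent if and only if the third summand is, and the problem reduces to the vector field $Y := \sum_{i=1}^n \div_\omega(X_i)X_i$. Writing $Y = \Delta_\omega - \sum_{i=1}^n X_i^2$ and using again that $\Delta_\omega$ is intrinsic, $Y$ is intrinsic precisely when the sum-of-squares operator $\sum_i X_i^2$ is.

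For the ``if'' direction: when $c=1$ the term $(c-1)Y$ vanishes identically, so $L_{\omega,1}=\Delta_\omega+\grad(h)$, which is intrinsic. The stated rewritings then follow from the elementary identity $\div_{e^f\mu}(X) = \div_\mu(X) + X(f)$ applied in the local sum-of-squares expansion: $\Delta_{e^h\omega} = \sum_i X_i^2 + \div_{e^h\omega}(X_i)X_i = \sum_i X_i^2 + \div_\omega(X_i)X_i + X_i(h)X_i = \Delta_\omega + \grad(h)$, and $e^h\omega = e^h e^h\mathcal{R} = e^{2h}\mathcal{R}$, giving $\Delta_{e^h\omega}=\Delta_{e^{2h}\mathcal{R}}$.

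For the ``only if'' direction I must certify that for $c\neq1$ the operator genuinely changes under a change of orthonormal frame, i.e. that $Y$ (equivalently $\sum_i X_i^2$) is not intrinsic. The most economical route is an explicit witness: take $M=\mathbb{R}^2$ with the Euclidean metric and $\omega$ the Lebesgue measure. For the frame $X_1=\partial_x$, $X_2=\partial_y$ one has $\div_\omega(X_i)=0$, hence $Y=0$; for the rotated frame $\tilde X_1 = \cos\theta\,\partial_x - \sin\theta\,\partial_y$, $\tilde X_2 = \sin\theta\,\partial_x + \cos\theta\,\partial_y$ with $\theta\in C^\infty(\mathbb{R}^2)$ non-constant, a short computation gives $\sum_i \div_\omega(\tilde X_i)\tilde X_i = -(\partial_y\theta)\,\partial_x + (\partial_x\theta)\,\partial_y \neq 0$. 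Thus the two frames produce operators differing by $(c-1)\big(-(\partial_y\theta)\,\partial_x + (\partial_x\theta)\,\partial_y\big)$, nonzero exactly when $c\neq1$. (More conceptually, for a general position-dependent orthogonal change of frame $\tilde X_i = \sum_j O_{ij}X_j$ one finds $\sum_i \tilde X_i^2 - \sum_i X_i^2 = \sum_k\big(\sum_{i,j}O_{ij}X_j(O_{ik})\big)X_k$, a purely first-order term that is not identically zero, and this is the only obstruction to frame-independence.)

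The main obstacle is this ``only if'' half: one must show $\sum_i\div_\omega(X_i)X_i$ really is frame-dependent and not merely written in frame-dependent notation, which is why exhibiting the concrete pair of frames above is the cleanest argument. By contrast, the ``if'' half and the algebraic rewritings are routine divergence bookkeeping once Theorem~\ref{t:limit-Riemannian-fake} is in hand.
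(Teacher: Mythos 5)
Your proposal is correct and follows the same route the paper intends: the corollary is read off directly from the explicit formula of Theorem~\ref{t:limit-Riemannian-fake}, with the whole issue reduced to the frame-dependence of the term $(c-1)\sum_{i}\dive_\omega(X_i)X_i$, and the rewritings obtained from $\dive_{e^f\mu}(X)=\dive_\mu(X)+X(f)$. The paper treats the ``only if'' half as self-evident (``it clearly depends on the choice of the orthonormal frame''), so your explicit witness on $\R^2$ with a position-dependent rotation --- which I checked gives $\sum_i\dive_\omega(\tilde X_i)\tilde X_i=-(\partial_y\theta)\partial_x+(\partial_x\theta)\partial_y$ as claimed --- is a welcome concrete justification of that step rather than a deviation from the paper's argument.
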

Even though $L_{\omega,1}$ is intrinsic and depends only on the Riemannian structure and the volume $\omega$, it is not symmetric in $L^2(M,\omega)$ unless we choose $h$ to be constant. This selects a preferred volume $\omega = \mathcal{R}$, up to a proportionality constant.
\begin{cor}\label{c:corflow2}
The operator $L_{\omega,c}$ with domain $C^\infty_c(M)$ is essentially self-adjoint in $L^2(M, \omega)$ if and only if $c=1$ and $\omega$ is proportional to the Riemannian volume.
\end{cor}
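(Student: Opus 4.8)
The plan is to reduce essential self-adjointness to the purely algebraic condition $L_{\omega,c}=\Delta_\omega$ and then read off the two conditions from that.

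First I would record the shape of the operator. By Theorem~\ref{t:limit-Riemannian-fake}, $L_{\omega,c}=\Delta_\omega+W$, where $W:=c\grad(h)+(c-1)\sum_{i=1}^n\div_\omega(X_i)X_i$ is a first-order term, i.e.\ a smooth vector field acting as a derivation. Note also that $L_{\omega,c}$ has no zeroth-order term (since $L^\eps_{\omega,c}1=0$) and its principal symbol is the Riemannian cometric, the same as that of $\Delta_\omega$; hence $W$ is genuinely first order, both its second-order and its zeroth-order parts vanishing.

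The key reduction is: $L_{\omega,c}$ is symmetric on $C^\infty_c(M)$ in $L^2(M,\omega)$ if and only if $W\equiv 0$, i.e.\ $L_{\omega,c}=\Delta_\omega$. Indeed $\Delta_\omega$ is symmetric in $L^2(M,\omega)$, so symmetry of $L_{\omega,c}$ is equivalent to symmetry of $W=L_{\omega,c}-\Delta_\omega$; but for any vector field $Y$ and $u,v\in C^\infty_c(M)$ one has $\langle Yu,v\rangle_\omega=-\langle u,Yv\rangle_\omega-\langle \div_\omega(Y)u,v\rangle_\omega$, so the formal adjoint of $Y$ is the first-order operator $-Y-\div_\omega(Y)$, which can coincide with $Y$ only if $Y\equiv 0$ (compare principal symbols). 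Combining this with the fact, already quoted in the excerpt, that $\Delta_\omega$ is essentially self-adjoint on $C^\infty_c(M)$ when $M$ is complete, we obtain: $L_{\omega,c}$ is essentially self-adjoint iff it is symmetric iff $L_{\omega,c}=\Delta_\omega$ (the implication ``symmetric $\Rightarrow$ essentially self-adjoint'' using only that $L_{\omega,c}$ then equals $\Delta_\omega$).

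It remains to solve $L_{\omega,c}=\Delta_\omega$. If $c=1$ then $W=\grad(h)$, which vanishes precisely when $h$ is constant, i.e.\ when $\omega$ is proportional to $\mathcal R$; in that case $L_{\omega,1}=\Delta_\omega=\Delta_{\mathcal R}$, essentially self-adjoint by completeness — this is the ``if'' direction. Conversely, if $c\neq 1$, then by Corollary~\ref{c:corflow1} the operator $L_{\omega,c}$ genuinely depends on the choice of global orthonormal frame while $\Delta_\omega$ does not, so $L_{\omega,c}=\Delta_\omega$ can hold for at most one frame and fails for some admissible frame; for that frame $L_{\omega,c}$ is not symmetric, hence not essentially self-adjoint. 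Thus essential self-adjointness (for every admissible frame, which is the relevant sense here, since the random walk of Theorem~\ref{t:limit-Riemannian-fake} requires such a choice) forces $c=1$, and then the case $c=1$ above forces $h$ constant. One can make the $c\ne 1$ step fully explicit instead: using $\div_\omega(X_i)=\div_{\mathcal R}(X_i)+X_i(h)$, the condition $W=0$ reads $(2c-1)\grad(h)=(1-c)\sum_{i=1}^n\div_{\mathcal R}(X_i)X_i$, and the right-hand side changes under a spatially non-constant rotation of the frame while the left-hand side does not, again leaving only $c=1$, $h\equiv\mathrm{const}$.

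The main obstacle is not any computation but pinning down the logical structure and the exact meaning of ``if and only if'': one must be clear that essential self-adjointness can be tested here simply by testing symmetry, because the symmetric members of this family are exactly $\Delta_\omega$, which is automatically essentially self-adjoint on a complete manifold; and that for $c\neq 1$ the non-symmetry has to be extracted from the genuine frame-dependence of $L_{\omega,c}$ (Corollary~\ref{c:corflow1}) rather than from a single fixed frame, where accidental cancellations could otherwise occur. The remaining ingredients — the form of $L_{\omega,c}$, that a nonzero derivation is not symmetric, and the essential self-adjointness of $\Delta_\omega$ — are routine or already available.
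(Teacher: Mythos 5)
Your proof is correct and follows essentially the same route that the paper takes implicitly (the corollary is stated there without a separate proof): reduce essential self-adjointness to symmetry of the first-order part, note that a nonzero derivation is never symmetric in $L^2(M,\omega)$, and combine Corollary~\ref{c:corflow1} (frame-independence forces $c=1$) with the observation that $L_{\omega,1}=\Delta_\omega+\grad(h)$ is symmetric iff $h$ is constant. Your explicit point that the ``only if'' direction must be read as quantified over all admissible frames is a genuine and worthwhile clarification --- for a single fixed frame the literal statement can fail (e.g.\ the coordinate frame on $\R^n$ with $h$ constant gives $L_{\omega,c}=\Delta_{\mathcal{R}}$ for every $c$) --- and it matches the paper's intent, which presents frame-(in)dependence as the first step of the argument.
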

On the other hand, by setting $c=0$, we recover the ``sum of squares'' generator of the solution of the Stratonovich SDE \eqref{Eqn:StratSDE}.
\begin{cor}\label{c:corflow3}
The operator $L_{\omega,0}$ depends on the choice of the vector fields $X_1,\ldots,X_n$, but not on the choice of the volume $\omega$, in particular
\begin{align}
L_{\omega,0} = \sum_{i=1}^n X_i^2.
\end{align}
\end{cor}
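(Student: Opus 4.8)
The plan is to specialize the formula of Theorem~\ref{t:limit-Riemannian-fake}; no new analysis is needed. First I would set $c = 0$ in
\begin{equation}
L_{\omega,c} = \Delta_{\omega} + c\,\grad(h) + (c-1)\sum_{i=1}^n \div_\omega(X_i)X_i,
\end{equation}
which eliminates the $\grad(h)$ term entirely and reduces the last sum to $-\sum_{i=1}^n \div_\omega(X_i)X_i$, so that $L_{\omega,0} = \Delta_\omega - \sum_{i=1}^n \div_\omega(X_i)X_i$.

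Next I would invoke the local expression of the weighted Laplacian in the global orthonormal frame, $\Delta_\omega = \sum_{i=1}^n X_i^2 + \sum_{i=1}^n \div_\omega(X_i) X_i$, recalled earlier in the excerpt. Substituting this in, the first-order terms cancel identically, leaving $L_{\omega,0} = \sum_{i=1}^n X_i^2$. Since this is an identity between globally defined differential operators and the frame $X_1,\dots,X_n$ is global by hypothesis, checking it on the domain of the frame suffices.

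Finally I would read off the two assertions from this formula: the right-hand side contains no occurrence of $\omega$, so $L_{\omega,0}$ is volume-independent --- consistent with the observation that the $c=0$ flow random walk coincides with the walk of Section~\ref{s:strato-intro}, which is built only from the $X_i$ and the uniform measure on $\mathbb{S}^{k-1}$, with no volume sampling at all; and $\sum_{i=1}^n X_i^2$ does genuinely depend on the chosen frame, since a non-constant change of orthonormal frame alters the second-order operator through the resulting Lie-bracket terms, which is precisely the lack of intrinsicness noted in the Introduction. There is essentially no obstacle here; the only point requiring a line of care is that the cancellation uses the frame expansion of $\Delta_\omega$, which is a local identity, but this is harmless since the frame is assumed global.
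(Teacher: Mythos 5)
Your proposal is correct and matches the paper's (implicit) argument exactly: Corollary~\ref{c:corflow3} is obtained by setting $c=0$ in Theorem~\ref{t:limit-Riemannian-fake} and cancelling the first-order terms against the local frame expansion $\Delta_\omega = \sum_{i=1}^n X_i^2 + \dive_\omega(X_i)X_i$, which also makes the volume-independence and frame-dependence immediate. Your side remark that the $c=0$ walk coincides with the Stratonovich flow walk of Section~\ref{s:strato-intro} is likewise the paper's own observation.
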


\subsection{Flow random walks with volume sampling in the sub-Riemannian setting}

To extend the flow random walk construction to the sub-Riemannian setting we need vector fields  $Z_1,\ldots,Z_{n-k}$ on $M$, transverse to $\distr$, in such a way that $\iota_{Z_1,\ldots,Z_{n-k}} \omega$ is a well-defined $k$-form that we can use to induce a measure on $\mathbb{S}^{k-1}$ as in Definition~\ref{d:fakeRiem}.

In general there is no natural choice of these $Z_1,\ldots,Z_{n-k}$. We explain the construction in detail for contact sub-Riemannian structures, where such a natural choice exists. Indeed, this class contains contact Carnot groups.

\subsubsection{Contact sub-Riemannian structures}
A sub-Riemannian structure $(M,\distr,\g)$ is \emph{contact} if there exists a global one-form $\eta$ such that $\distr = \ker \eta$. This forces $\dim(M) = 2d+1$ and $\rank \distr = 2d$, for some $d \geq 1$. Consider the skew-symmetric \emph{contact endomorphism} $J : \Gamma(\distr) \to \Gamma(\distr)$, defined by the relation
\begin{equation}
g(X,JY) = d\eta(X,Y), \qquad  \forall X, Y \in \Gamma(\distr).
\end{equation}
We assume that $J$ is non-degenerate. Multiplying $\eta$ by a non-zero smooth function $f$ gives the same contact structure, with contact endomorphism $f J$. We fix $\eta$ up to sign by taking
\begin{equation}
\tr(JJ^*) = 1.
\end{equation}
The Reeb vector field is defined as the unique vector $X_0$ such that
\begin{equation}
\eta(X_0) = 1, \qquad \iota_{X_0} d\eta = 0.
\end{equation}
In this case the Popp density is the unique density such that $\mathcal{P}(X_0,X_1,\ldots,X_{2d}) =1$ for any orthonormal frame $X_1,\ldots,X_{2d}$ of $\distr$ (see \cite{nostropopp}).

The flow random walk with volume sampling, with volume $\omega$ and sampling ratio $c$, can be implemented as follows.
\begin{definition}
Let $(M,\distr,\g)$ be a contact sub-Riemannian structure with Reeb vector field $X_0$. For any $q \in M$ and $\eps >0$ we define the family of densities $\mu_q^\eps$ on $\mathbb{S}^{k-1}$
\begin{equation}
\mu_q^\eps(u):= \frac{1}{N(q,\eps)} \left\lvert(E_{q,\eps}^* \circ \iota_{X_0,\dot\gamma_u(\eps)} \omega)(u)\right\rvert, \qquad \forall u \in \mathbb{S}^{k-1},
\end{equation}
where $N(q,\eps)$ is fixed by the condition $\int_{\mathbb{S}^{k-1}} \mu_q^\eps = 1$. For $\eps = 0$, we set $\mu_q^0$ to be the standard normalized density on $\mathbb{S}^{k-1}$. 
\end{definition}
We define a random walk $r_{t,c}^\eps$ as in \eqref{eq:process-fakeriem}, with sampling ratio $c \in [0,1]$, and we call the associated family of operators $L_{\omega,c}^\eps$ as in \eqref{eq:operator-fakeriem}, with no risk of confusion.

\begin{theorem}\label{t:limit-contact-fake}
Let $(M,\distr,\g)$ be a complete contact sub-Riemannian manifold and $X_1,\ldots,X_{2d}$ be a global set of orthonormal vector fields. Let $c \in [0,1]$ and $\omega = e^h \mathcal{P}$ be a fixed volume on $M$, for some $h \in C^\infty(M)$. Then $L_{c,\omega}^\eps \to L_{c,\omega}$, where
\begin{align}
L_{\omega,c} = \Delta_{\omega} + c \grad (h) + (c-1) \sum_{i=1}^k \div_\omega(X_i)X_i.
\end{align}
Moreover $P_{\omega,c}^\eps \to P_{\omega,c}$ weakly, where $P_{\omega,c}$ is the law of the process associated with $L_{\omega,c}$ (which we assume does not explode).
\end{theorem}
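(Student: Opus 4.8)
The plan is to prove the operator convergence $L^{\eps}_{\omega,c}\to L_{\omega,c}$ uniformly on compact sets, and then to invoke Theorem~\ref{t:convergence}; its hypotheses hold for the limit operator, since $L_{\omega,c}$ is smooth, second order, has the same non-negative principal symbol $\lambda\mapsto\sum_i\langle\lambda,X_i\rangle^2$ as $\Delta_\omega$, annihilates constants, and is assumed not to explode. The first reduction is to express $\mu^{c\eps}_q$ through a scalar Jacobian. Since the flow of $\eps X_u$ for unit time equals the flow of $X_u$ for time $\eps$, we have $E_{q,\eps}(u)=F_q(\eps u)$ with $F_q:=E_{q,1}$, and a direct pullback computation shows that $\lvert(E^{*}_{q,\eps}\,\iota_{X_0,\dot\gamma_u(\eps)}\omega)(u)\rvert=\eps^{k-1}\lvert J_q(\eps u)\rvert$ times the standard density on $\mS^{k-1}$, where $J_q$ is defined near $0\in\R^k$ by $J_q(y)\,dy^1\wedge\cdots\wedge dy^k=F_q^{*}(\iota_{X_0}\omega)$; hence $\mu^{c\eps}_q$ has density proportional to $\lvert J_q(c\eps u)\rvert$ on $\mS^{k-1}$. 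Here the contact hypothesis first enters through the Popp normalization $\mathcal{P}(X_0,X_1,\ldots,X_k)\equiv1$, which gives $\lvert J_q(0)\rvert=\lvert\omega_q(X_0,X_1,\ldots,X_k)\rvert=e^{h(q)}>0$. Consequently, for $\eps$ small (uniformly on compacts), the density is a smooth nonvanishing function of $(\eps,u)$ near $\eps=0$ and
\[
\mu^{c\eps}_q(u)=\Big(1+c\eps\,\textstyle\sum_j u_j\,\beta_j(q)+O(\eps^2)\Big)\,d\bar\sigma(u),\qquad \beta_j(q):=\partial_{y_j}\log\lvert J_q\rvert(0),
\]
with $d\bar\sigma$ the normalized uniform measure on $\mS^{k-1}$ (the normalizing constant is $1+O(\eps^2)$ because $\int u_j\,d\bar\sigma=0$).

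Next I would insert this together with $\phi(E_{q,\eps}(u))-\phi(q)=\eps(X_u\phi)(q)+\tfrac{\eps^2}{2}(X_u^2\phi)(q)+O(\eps^3)$ into
\[
(L^{\eps}_{\omega,c}\phi)(q)=\frac{2k}{\eps^2}\int_{\mS^{k-1}}\big[\phi(E_{q,\eps}(u))-\phi(q)\big]\,\mu^{c\eps}_q(u),
\]
and integrate using $\int u_i\,d\bar\sigma=0$ and $\int u_iu_j\,d\bar\sigma=\tfrac1k\delta_{ij}$. The $O(\eps^{-1})$ contribution cancels, the remainders are $O(\eps)$ uniformly on compacts by smoothness, and the limit is $(L_{\omega,c}\phi)(q)=\sum_{i=1}^k(X_i^2\phi)(q)+2c\sum_{j=1}^k\beta_j(q)(X_j\phi)(q)$. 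Since $\Delta_\omega=\sum_iX_i^2+\sum_i\dive_\omega(X_i)X_i$, a short rearrangement identifies this with $\Delta_\omega+c\,\grad(h)+(c-1)\sum_i\dive_\omega(X_i)X_i$ precisely when $\beta_j=\tfrac12\big(\dive_\omega(X_j)+X_j(h)\big)$, so the theorem is reduced to this identity.

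Establishing it is the core of the argument. Writing $J_q(te_j)=(\iota_{X_0}\omega)_{e^{tX_j}(q)}\big((\partial_{y_1}F_q)(te_j),\ldots,(\partial_{y_k}F_q)(te_j)\big)$, Taylor-expanding the flow $y\mapsto e^{X_y}(q)$ to second order, and differentiating in $t$ at $t=0$, a standard computation gives
\[
\partial_{y_j}J_q(0)=(\mathcal{L}_{X_j}\iota_{X_0}\omega)_q(X_1,\ldots,X_k)+\tfrac12\sum_{i=1}^k(\iota_{X_0}\omega)_q(X_1,\ldots,[X_j,X_i],\ldots,X_k).
\]
By Cartan's identity $\mathcal{L}_{X_j}\iota_{X_0}\omega=\iota_{[X_j,X_0]}\omega+\dive_\omega(X_j)\,\iota_{X_0}\omega$; the contact hypothesis intervenes again through the Reeb relation $\iota_{X_0}d\eta=0$, which forces $[X_j,X_0]\in\distr=\spn\{X_1,\ldots,X_k\}$, whence $\omega_q([X_j,X_0],X_1,\ldots,X_k)=0$ and the first term equals $\dive_\omega(X_j)(q)\,e^{h(q)}$ (up to the same sign as $J_q(0)$). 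Writing $[X_j,X_i]\equiv\sum_\ell c^{\ell}_{ji}X_\ell\pmod{X_0}$, the second term equals $\tfrac12\sum_i c^{i}_{ji}(q)\,e^{h(q)}$. Finally, the frame expression for the divergence reads $\dive_\omega(X_j)=X_j(h)-\sum_{a=0}^k c^{a}_{ja}$, and $c^{0}_{j0}=0$ (again because $[X_j,X_0]\in\distr$), so $\sum_i c^{i}_{ji}=X_j(h)-\dive_\omega(X_j)$. Substituting gives $\beta_j=\dive_\omega(X_j)+\tfrac12\big(X_j(h)-\dive_\omega(X_j)\big)=\tfrac12\big(\dive_\omega(X_j)+X_j(h)\big)$, as desired.

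I expect the main obstacle to be this last paragraph: the careful bookkeeping of the second-order expansion of the endpoint flow and of the contraction against $\iota_{X_0}\omega$, and the verification that the cross terms collapse --- a collapse that genuinely relies on both the contact hypothesis (via the Reeb identity) and the contact normalization of Popp volume. The remaining ingredients --- the spherical moments, the uniformity of the Taylor remainders on compact sets, and the verification of the hypotheses of Theorem~\ref{t:convergence} needed to pass from operator convergence to the weak convergence $P^{\eps}_{\omega,c}\to P_{\omega,c}$ --- are routine. We note that the very same computation, with the step involving $[X_j,X_0]$ deleted, yields the Riemannian statement of Theorem~\ref{t:limit-Riemannian-fake}.
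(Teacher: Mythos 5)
Your argument is correct and follows essentially the same route as the paper's proof: Taylor expansion of $\phi$ along the flow, a first-order expansion of the sampling density whose linear coefficient is identified with $\tfrac{1}{2}\dive_{\mathcal{P}}(X_u)+X_u(h)$, integration against the spherical moments, and an appeal to Theorem~\ref{t:convergence}. The only difference is organizational: where you differentiate the pulled-back top form $F_q^*(\iota_{X_0}\omega)$ directly and invoke Cartan's identity, the paper expands the pulled-back dual coframe (Lemma~\ref{l:pullbacknu}) and takes a determinant --- both computations hinge on the same Reeb identity $\eta([X_0,X_j])=0$, and your $\beta_j=\tfrac{1}{2}\lp\dive_\omega(X_j)+X_j(h)\rp$ agrees with the paper's coefficient $\tfrac{1}{2}\dive_{\mathcal{P}}(X_j)+X_j(h)$.
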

This construction, in the contact sub-Riemannian case, has the some properties as the Riemannian one, where the Riemannian volume is replaced by Popp one. In particular we have the following analogues of Corollaries~\ref{c:corflow1},~\ref{c:corflow2}, and \ref{c:corflow3}.
\begin{cor}
The operator $L_{\omega,c}$ does not depend on the choice of the orthonormal frame if and only if $c=1$. In this case
\begin{align}
L_{\omega,1}  = \Delta_{\omega} +  \grad_H(h) = \Delta_{e^h \omega}  = \Delta_{e^{2h} \mathcal{P}} .
\end{align}
\end{cor}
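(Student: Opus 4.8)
The plan is to deduce everything from the explicit formula furnished by Theorem~\ref{t:limit-contact-fake},
\begin{equation}
L_{\omega,c} = \Delta_\omega + c\,\grad(h) + (c-1)\sum_{i=1}^{k}\dive_\omega(X_i)X_i ,
\end{equation}
(where $\grad$ is the horizontal gradient, written $\grad_H$ in the statement), and to observe that the only term whose very definition refers to the orthonormal frame is the first-order operator $D:=\sum_{i=1}^{k}\dive_\omega(X_i)X_i$: both $\Delta_\omega=\dive_\omega\circ\grad$ and $\grad(h)$ are manifestly intrinsic. Hence $L_{\omega,c}$ is frame-independent as soon as the coefficient $c-1$ annihilates $D$, i.e.\ for $c=1$; the real content is that for $c\neq 1$ the term $(c-1)D$ genuinely changes under a change of frame.

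\emph{Sufficiency and the closed formula ($c=1$).} I would simply drop the vanishing term, obtaining $L_{\omega,1}=\Delta_\omega+\grad(h)$, and then invoke the conformal behaviour of the divergence: for $f\in C^\infty(M)$ and any vector field $X$, the product rule for the Lie derivative gives $\mathcal{L}_X(e^f\omega)=e^f(Xf)\,\omega+e^f\mathcal{L}_X\omega$, whence $\dive_{e^f\omega}(X)=\dive_\omega(X)+X(f)$. Applying this with $f=h$ and $X=\grad\phi$, and using the defining relation \eqref{eq:grad} of the horizontal gradient to write $(\grad\phi)(h)=\g(\grad h,\grad\phi)=(\grad h)(\phi)$, one gets $\Delta_{e^h\omega}\phi=\Delta_\omega\phi+(\grad h)(\phi)$ for all $\phi$, that is $\Delta_{e^h\omega}=\Delta_\omega+\grad(h)=L_{\omega,1}$ as second-order operators. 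Finally $\omega=e^h\mathcal{P}$ gives $e^h\omega=e^{2h}\mathcal{P}$, completing the chain of equalities.

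\emph{Necessity ($c\neq 1$).} For this direction I would fix $q\in M$, an orthonormal frame $X_1,\dots,X_k$ near $q$, and a smooth gauge $R\colon U\to O(k)$, and compare $D$ with $D':=\sum_i\dive_\omega(X_i')X_i'$ for the rotated frame $X_i'=\sum_j R_{ij}X_j$. From $\dive_\omega(X_i')=\sum_j(R_{ij}\dive_\omega(X_j)+X_j(R_{ij}))$ and the orthogonality $\sum_i R_{ij}R_{il}=\delta_{jl}$, the divergence terms recombine into $D$, and one is left with $D'=D+Y$, where $Y:=\sum_l\big(\sum_{i,j}R_{il}\,X_j(R_{ij})\big)X_l$ depends only on the $1$-jet of $R$. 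It then suffices to exhibit an $R$ with $Y(q)\neq0$: take $R(q)=\mathrm{Id}$; differentiating $R^{\top}R=\mathrm{Id}$ forces each matrix $A^{(j)}:=(X_jR)(q)$ to be skew-symmetric, but these are otherwise free, so pick $A^{(1)}$ a nonzero skew matrix and $A^{(j)}=0$ for $j\geq 2$, which gives $\sum_{i,j}R_{il}(q)X_j(R_{ij})(q)=\sum_j A^{(j)}_{lj}=A^{(1)}_{l1}\neq 0$ for a suitable $l$, hence $Y(q)\neq 0$. (Such an $R$ exists: write $R=\exp(S)$ with $S$ skew-matrix-valued, $S(q)=0$, and $dS|_q$ chosen to take the prescribed values on $\distr_q$.) Therefore, for $c\neq 1$, the two frames produce operators differing by $(c-1)(D'-D)=(c-1)Y\not\equiv 0$, so $L_{\omega,c}$ depends on the choice of frame.

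The only delicate point is the necessity direction: one must confirm that a prescribed skew-symmetric $1$-jet at $q$ is realized by a bona fide smooth $O(k)$-valued gauge, and track the index contractions in $D'$ carefully so that the $\dive_\omega(X_j)$-part cancels exactly and the purely $dR$-dependent obstruction $Y$ survives. Everything else — the conformal calculus and the rescaling $e^h\omega=e^{2h}\mathcal{P}$ — is immediate.
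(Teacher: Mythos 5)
Your proposal is correct and follows the same route the paper intends: the corollary is stated as a direct consequence of the formula in Theorem~\ref{t:limit-contact-fake}, with the $c=1$ case reducing to the change-of-volume identity $\Delta_{e^h\omega}=\Delta_\omega+\grad(h)$ and $e^h\omega=e^{2h}\mathcal{P}$, exactly as you argue. Your explicit gauge-rotation computation for the ``only if'' direction (showing $D'=D+Y$ with $Y(q)\neq 0$ for a suitable skew-symmetric $1$-jet) is a detail the paper leaves implicit, and it checks out, including the cancellation $\sum_i R_{ij}R_{il}=\delta_{jl}$ and the realizability of the prescribed jet by $R=\exp(S)$.
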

\begin{cor}
The operator $L_{\omega,c}$ with domain $C^\infty_c(M)$ is essentially self-adjoint in $L^2(M,\omega)$ if and only if $c=1$ and $\omega$ is proportional to the Popp volume.
\end{cor}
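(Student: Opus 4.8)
The plan is to extract everything from the explicit formula of Theorem~\ref{t:limit-contact-fake}, following the same template as the Riemannian Corollary~\ref{c:corflow2}, the only change being that the Popp volume $\mathcal{P}$ now plays the role of the Riemannian volume. Write
\[
L_{\omega,c} = \Delta_\omega + Y_c, \qquad Y_c := c\,\grad(h) + (c-1)\sum_{i=1}^{2d}\div_\omega(X_i)\,X_i ,
\]
so that $Y_c$ is a smooth vector field acting as a first-order derivation (no zeroth-order term). The base observation is that $\Delta_\omega=\div_\omega\circ\grad$ is symmetric on $C^\infty_c(M)$ in $L^2(M,\omega)$ and, because $(M,\distr,\g)$ is complete and contact with $J$ non-degenerate --- hence $\distr$ is fat and admits no nontrivial abnormal minimizers --- is in fact essentially self-adjoint there by Strichartz's theorem \cite{strichartz,strichartzerrata} (cf.\ the remark following Example~\ref{ex:biheis}).

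Next I would record the elementary fact that a first-order derivation $Y$ is symmetric on $C^\infty_c(M)$ with respect to $L^2(M,\omega)$ if and only if $Y\equiv 0$: Stokes' theorem gives $\langle Y\phi,\psi\rangle_\omega = -\langle\phi,Y\psi\rangle_\omega - \langle\phi\psi,\div_\omega(Y)\rangle_\omega$ for $\phi,\psi\in C^\infty_c(M)$, i.e.\ $Y^\ast=-Y-\div_\omega(Y)$ on $C^\infty_c(M)$; symmetry forces $2Y+\div_\omega(Y)=0$ as operators, and comparing the genuinely first-order part with the zeroth-order part forces $Y\equiv 0$ (and then $\div_\omega(Y)=0$ automatically). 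Combined with the symmetry of $\Delta_\omega$, this shows $L_{\omega,c}$ is symmetric $\iff Y_c\equiv 0$; and when $Y_c\equiv 0$ one simply has $L_{\omega,c}=\Delta_\omega$, which is already essentially self-adjoint. Since symmetry is necessary for essential self-adjointness, we conclude: $L_{\omega,c}$ is essentially self-adjoint in $L^2(M,\omega)$ if and only if $Y_c\equiv 0$.

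It remains to solve $Y_c\equiv 0$. Using $\div_{e^h\mathcal{P}}(X_i)=\div_{\mathcal{P}}(X_i)+X_i(h)$ and $\sum_i X_i(h)X_i=\grad(h)$ (equivalently, substituting $\sum_i\div_\omega(X_i)X_i=\Delta_\omega-\sum_i X_i^2$ and the preceding corollary's identity $\Delta_\omega+\grad(h)=\Delta_{e^{2h}\mathcal{P}}$, which exhibits $L_{\omega,c}=c\,\Delta_{e^{2h}\mathcal{P}}+(1-c)\sum_i X_i^2$ as a convex combination), one gets
\[
Y_c = (2c-1)\,\grad(h) + (c-1)\sum_{i=1}^{2d}\div_{\mathcal{P}}(X_i)\,X_i .
\]
If $c=1$ and $\omega\propto\mathcal{P}$ (i.e.\ $h$ constant), then $Y_1=\grad(h)=0$ and $L_{\omega,1}=\Delta_{\mathcal{P}}$, so we are done. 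For the converse one notes that the second summand is exactly the first-order defect $\Delta_{\mathcal{P}}-\sum_i X_i^2$, which is nonzero for a general contact structure and is not proportional to $\grad(h)$ (it is frame-dependent and unrelated to the choice of $h$); varying $h$ with the structure and frame fixed decouples the two terms, so $Y_c\equiv 0$ forces the coefficient of $\grad(h)$ to vanish together with $h$ being constant --- here using that $\grad(h)\equiv 0$ implies $dh|_{\distr}\equiv 0$, hence $dh\equiv 0$ by the H\"ormander condition --- and then $(c-1)\big(\Delta_{\mathcal{P}}-\sum_i X_i^2\big)\equiv 0$ forces $c=1$. The main obstacle is precisely this converse bookkeeping: because $L_{\omega,c}$ is not intrinsic, on special structures (e.g.\ contact Carnot groups with the standard frame, where $\div_{\mathcal{P}}(X_i)\equiv 0$) the two terms of $Y_c$ can conspire to vanish for other values of $c$ as well, so one must be careful to prove the statement in the form asserted, namely that $c=1$ together with $\omega\propto\mathcal{P}$ is the condition guaranteeing essential self-adjointness uniformly over all contact sub-Riemannian structures and all frames.
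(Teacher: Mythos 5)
Your proof is correct and follows exactly the route the paper intends: the paper states this corollary without proof, as the direct analogue of Corollary~\ref{c:corflow2} to be read off from the formula in Theorem~\ref{t:limit-contact-fake}, and your decomposition $L_{\omega,c}=\Delta_\omega+Y_c$ together with the observation that a nonzero first-order derivation is never symmetric in $L^2(M,\omega)$ (while $\Delta_\omega$ is essentially self-adjoint by completeness and the absence of nontrivial abnormal minimizers on a contact structure) is the standard way to make that rigorous. Your closing caveat about the converse is well taken --- on a contact Carnot group with the standard left-invariant frame and $\omega=\mathcal{P}$ one has $\sum_{i}\div_{\mathcal{P}}(X_i)X_i=0$, hence $L_{\mathcal{P},c}=\Delta_{\mathcal{P}}$ for \emph{every} $c$ --- so the ``only if'' must indeed be read as you do, namely as the unique condition on $(c,\omega)$ that guarantees essential self-adjointness uniformly over all contact structures and all choices of orthonormal frame.
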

\begin{cor}
The operator $L_{\omega,0}$ depends on the choice of the vector fields $X_1,\ldots,X_k$, but not on the choice of the volume $\omega$, in particular
\begin{align}
L_{\omega,0} = \sum_{i=1}^k X_i^2.
\end{align}
\end{cor}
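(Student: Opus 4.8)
The plan is to obtain this as the $c=0$ specialization of Theorem~\ref{t:limit-contact-fake}, followed by a one-line unwinding of the sub-Laplacian. First I would set $c=0$ in the formula of Theorem~\ref{t:limit-contact-fake}: the term $c\,\grad(h)$ vanishes and the coefficient $(c-1)$ becomes $-1$, so
\begin{equation}
L_{\omega,0} = \Delta_\omega - \sum_{i=1}^k \div_\omega(X_i)\, X_i .
\end{equation}
Next I would recall the local expression \eqref{eq:sublap} for the sub-Laplacian with respect to the very orthonormal frame $X_1,\dots,X_k$ used to build the flow random walk, namely $\Delta_\omega = \sum_{i=1}^k X_i^2 + \sum_{i=1}^k \div_\omega(X_i)\,X_i$. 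Substituting this into the previous display, the two first-order sums involving $\div_\omega$ cancel identically, and one is left with $L_{\omega,0} = \sum_{i=1}^k X_i^2$. Since $\omega$ no longer appears, $L_{\omega,0}$ is manifestly independent of the volume; this also recovers the sum-of-squares generator of the Stratonovich SDE \eqref{Eqn:StratSDE} from Section~\ref{s:strato-intro}, exactly as in the Riemannian Corollary~\ref{c:corflow3}.

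It then remains to justify that $L_{\omega,0}$ genuinely depends on the choice of orthonormal frame, in contrast to the $c=1$ case. Here I would observe that, although the principal symbol of $\sum_{i=1}^k X_i^2$ equals $\sum_i \langle\lambda,X_i\rangle^2 = 2H(\lambda)$ and is therefore intrinsic, the first-order part is not: replacing the frame by $X_i' = \sum_j O_{ij}X_j$ for a smooth, non-constant map $O\colon M \to \mathrm{SO}(k)$ leaves $\distr$ and $\g$ unchanged (so $X_1',\dots,X_k'$ is again an orthonormal frame), but produces $\sum_i (X_i')^2 = \sum_i X_i^2 + W$, where the first-order correction $W$ is a horizontal vector field built from the derivatives of $O$ along the $X_i$ and does not vanish for generic $O$. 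An explicit witness suffices: on a contact Carnot group, rotating the horizontal frame by a non-constant angle changes $\sum_i X_i^2$ by a nonzero first-order term; the cleanest illustration is the Euclidean analogue on $\R^2$, where $\partial_x^2+\partial_y^2=\Delta$ while a frame rotated by $\theta(x,y)$ gives $\Delta + \theta_x\,\partial_y - \theta_y\,\partial_x$.

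There is essentially no analytic obstacle here: all of the hard work — the asymptotic expansion of the endpoint map, the behaviour of the densities $\mu_q^{c\eps}$, and the weak convergence of $P_{\omega,c}^\eps$ — is already packaged inside Theorem~\ref{t:limit-contact-fake}. The only step meriting a moment's care is the frame-dependence claim, where one must organize the gauge-change computation correctly (keeping in mind that the $X_i$ do not commute and that $O$ varies over $M$) so as to be sure the first-order correction is not identically zero; the $\R^2$ computation above shows that it is not.
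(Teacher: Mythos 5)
Your proposal is correct and follows exactly the route the paper intends: the corollary is an immediate consequence of setting $c=0$ in Theorem~\ref{t:limit-contact-fake} and cancelling the $\dive_\omega(X_i)X_i$ terms against the local expression \eqref{eq:sublap} of $\Delta_\omega$. Your additional gauge-rotation computation verifying genuine frame-dependence (including the correct $\R^2$ identity $\Delta+\theta_x\partial_y-\theta_y\partial_x$) goes slightly beyond what the paper records but is accurate and harmless.
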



\section{Proof of the results}\label{s:proofs}

\subsection{Proof of Theorem \ref{t:limit-Riemannian}}

Let $\eps \leq \eps_0$ and $q \in M$. Fix normal coordinates $(x_1,\ldots,x_n)$ on a neighborhood of $q$. In these coordinates, length parametrized geodesics are straight lines $\eps v$, with $v \in S_q M \simeq \mathbb{S}^{n-1}$. In particular
\begin{equation}
\phi(\exp_q(\eps,v)) - \phi(q) = \eps \sum_{i=1}^n v_i  \partial_i \phi + \frac{1}{2} \eps^2 \sum_{i,j=1}^n v_i v_j \partial_{ij}^2 \phi + \eps^3 O_q,
\end{equation}
where all derivatives are computed at $q$. The term $O_q$ denotes a remainder term which is uniformly bounded on any compact set $K \subset M$ by a constant $|O_q| \leq M_K$. When $\omega = \mathcal{R}$ is the Riemannian volume, well-known asymptotics (see, for instance, \cite{GallotLafontaine}) gives
\begin{equation}
\mu_q^{c\eps}(v) = (1 +\eps^2 O_q) d\Omega,
\end{equation}
where $d\Omega$ is the normalized euclidean measure on $\mathbb{S}^{n-1}$. When $\omega = e^h\mathcal{R}$, the above formula is multiplied by a factor $e^{h(\exp_q(c\eps,v))}$, and taking into account the normalization we obtain
\begin{equation}
\mu_q^{c\eps}(v) = \left(1 + \eps c \sum_{i=1}^n v_i\partial_i h  + \eps^2 O_q\right) d\Omega.
\end{equation}
Then, for the operator $L_{\omega,c}^\eps\phi$, evaluated at $q$, we obtain
\begin{align}
(L_{\omega,c}^\eps \phi )|_q& = \frac{2n}{\eps^2} \int_{S_q M} [\phi(\exp_q(\eps,v)) - \phi(q)] \mu_q^{c\eps}(v) \\
& =\frac{2n}{\eps} \sum_{i=1}^n  \partial_i \phi \int_{\mathbb{S}^{n-1}} v_i  d \Omega  + 2n \sum_{i,j=1}^n \left( c \partial_i h  \partial_j\phi +\frac{1}{2} \partial_{ij}^2 \phi \right) \int_{\mathbb{S}^{n-1}} v_j v_i d\Omega   + \eps O_q.
\end{align}
The first integral is zero, while $\int_{\mathbb{S}^{n-1}} v_i v_j d\Omega = \delta_{ij}/n$. Thus we have
\begin{equation}
(L_{\omega,c}^\eps \phi)|_q=\sum_{i=1}^n \partial_{ii}^2 \phi + 2c (\partial_i h)( \partial_i \phi) + \eps O_q.
\end{equation}
The first term is the Laplace-Beltrami operator applied to $\phi$, written in normal coordinates, while the second term coincides with the action of the derivation $2c \grad(h)$ on $\phi$, evaluated at $q$. Since the l.h.s.\ is invariant under change of coordinates, we have $L_{\omega,c}^\eps \to L_{\omega,c}$, where
\begin{equation}
L_{\omega,c} = \Delta_{\mathcal{R}} + 2c \grad(h),
\end{equation}
and the convergence is uniform on compact sets. The alternative forms of the statement follow from the change of volume formula $\Delta_{e^h \omega} = \Delta_{\omega} + \grad(h)$. The convergence $P^\eps_{\omega,c} \to P_{\omega,c}$ follows from Theorem~\ref{t:convergence}. \hfill $\qed$

\subsection{Proof of Theorem \ref{t:limit-contact-carnot}}
We start with the case $h=0$ and $q=0$. The Hamilton equations for a contact Carnot groups are readily solved, and the geodesic with initial covector $(p_x,p_z) \in T_0^*M \simeq \R^{2d} \times \R$ is
\begin{equation}
x(t) = \int_0^t e^{s p_z A} p_x ds, \qquad z(t) = -\frac{1}{2} \int_0^t \dot{x}^*(s) A x(s)  ds.
\end{equation}
It is convenient to split $p_x$ as $p_x = (p_x^1,\ldots,p_x^d)$, with $p_x^i = (p_{x_{2i-1}},p_{x_{2i}}) \in \R^2$ the projection of $p_x$ on the real eigenspace of $A$ corresponding to the singular value $\alpha_i$. We get
\begin{equation}\label{eq:expcorank1}
\exp_0(t;p_x,p_z) = \begin{pmatrix} B(t;\alpha_1 p_z) p^1_x \\
\vdots \\
B(t;\alpha_d p_z) p^d_x \\
\sum_{i=1}^d b(t;\alpha_i p_z) \alpha_i \|p_x^i\|^2
\end{pmatrix},
\end{equation}
where
\begin{equation}
B(t; y ):=\frac{\sin(t y)}{y} \mathbb{I} + \frac{\cos(t y)-1}{y} J, \qquad b(t;y) := \frac{t y - \sin(ty)}{2 y^2}.
\end{equation}
If $p_z =0$, the equations above must be understood in the limit, thus $\exp_0(t;p_x,0) = (tp_x,0)$. The Jacobian determinant is computed in \cite{ABB-Hausdorff} (see also \cite{R-MCP} for the more general case of a corank 1 Carnot group with a notation closer to that of this paper):
\begin{equation}
\det(d_{p_x,p_z}\exp_0(t;\cdot)) = \frac{t^{2d+3}}{4\alpha^2}\sum_{i=1}^d g_i(t p_z) \|p_x^i\|^2,
\end{equation}
where $\alpha = \prod_{i=1}^d \alpha_i$ and
\begin{equation}
g_i(y):= \left( \prod_{j\neq i} \sin\left(\tfrac{\alpha_j y}{2}\right)\right)^2 \frac{\sin\left(\tfrac{\alpha_i y}{2}\right) \left(\tfrac{\alpha_i y}{2} \cos\left(\tfrac{\alpha_i y}{2}\right)- \sin\left(\tfrac{\alpha_i y}{2}\right)\right) }{(y/2)^{2d+2}}.
\end{equation}
\begin{lemma}\label{l:switch}
For any $\lambda \in T_q^*M$ and $t>0$, we have (up to the normalization)
\begin{equation}
(\exp_q(t;\cdot)^* \circ \iota_{\dot\gamma_\lambda(t)}\omega)(\lambda) = \frac{1}{t} \iota_\lambda \circ (\exp_q(t;\cdot)^* \omega)(\lambda).
\end{equation}
\end{lemma}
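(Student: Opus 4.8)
The plan is to reduce the identity to one elementary observation: the differential of the (fiberwise) exponential map, evaluated on the radial vector field of the fiber $T_q^*M$, reproduces the geodesic velocity up to the scalar factor $t$. Fix $q\in M$ and $t>0$, and write $E_t:=\exp_q(t;\cdot)\colon T_q^*M\to M$, so that $E_t(\lambda)=\gamma_\lambda(t)$ (this is a smooth map defined on all of $T_q^*M$ under the completeness assumption). From the fiberwise degree-two homogeneity of $H$ we recorded the rescaling $\gamma_{\alpha\lambda}(t)=\gamma_\lambda(\alpha t)$ for $\alpha>0$, i.e.\ $E_t(\alpha\lambda)=E_{\alpha t}(\lambda)$. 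Let $\mathcal{E}$ denote the radial (Euler) vector field of the vector space $T_q^*M$, namely $\mathcal{E}_\lambda=\lambda$ under the canonical identification $T_\lambda(T_q^*M)\simeq T_q^*M$. Differentiating $E_t(\alpha\lambda)$ at $\alpha=1$ along the curve $\alpha\mapsto\alpha\lambda$, whose velocity at $\alpha=1$ is $\mathcal{E}_\lambda$, and along $\alpha\mapsto\gamma_\lambda(\alpha t)$ on the other side, gives
\begin{equation}
(dE_t)_\lambda(\mathcal{E}_\lambda)=\left.\frac{d}{d\alpha}\right|_{\alpha=1}\gamma_\lambda(\alpha t)=t\,\dot\gamma_\lambda(t).
\end{equation}

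With this in hand, the lemma follows from a pointwise linear-algebra identity. For a linear map $A\colon V\to W$ between finite-dimensional vector spaces, an $m$-form $\eta$ on $W$ with $m=\dim W$, and $v_0\in V$, evaluating both sides on an arbitrary $(m-1)$-tuple $v_1,\dots,v_{m-1}\in V$ shows $A^*(\iota_{Av_0}\eta)=\iota_{v_0}(A^*\eta)$. Applying this with $V=T_\lambda(T_q^*M)$, $W=T_{\gamma_\lambda(t)}M$ (so $m=n=\dim M$), $A=(dE_t)_\lambda$, $\eta=\omega_{\gamma_\lambda(t)}$, and $v_0=\tfrac{1}{t}\mathcal{E}_\lambda$ — which by the previous display satisfies $Av_0=\dot\gamma_\lambda(t)$ — one obtains
\begin{equation}
\big(E_t^*\,\iota_{\dot\gamma_\lambda(t)}\omega\big)_\lambda=\iota_{\frac1t\mathcal{E}_\lambda}\big(E_t^*\omega\big)_\lambda=\frac1t\,\iota_{\mathcal{E}_\lambda}\big(E_t^*\omega\big)_\lambda,
\end{equation}
which is precisely the claimed equality of $(n-1)$-forms on $T_q^*M$ at the point $\lambda$ (the interior product $\iota_\lambda$ in the statement being $\iota_{\mathcal{E}_\lambda}$, and ``up to the normalization'' referring to $N(q,t)$). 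In particular the identity restricts to the submanifold $\cyl_q\ni\lambda$, which is the form in which it is used in the proof of Theorem~\ref{t:limit-contact-carnot}, where the right-hand side is then computed explicitly from the Jacobian determinant of $\exp_q(t;\cdot)$.

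There is no substantial obstacle here; the computation is routine once the homogeneity is exploited. The only points demanding care are the bookkeeping of degrees ($\omega$ has top degree $n$, both sides of the identity have degree $n-1$), the correct identification of the radial vector field $\mathcal{E}_\lambda$ with the base point $\lambda$ under $T_\lambda(T_q^*M)\simeq T_q^*M$ (so that $\gamma_{\alpha\lambda}(t)=\gamma_\lambda(\alpha t)$ differentiates to exactly $t\,\dot\gamma_\lambda(t)$ rather than some other multiple), and the hypothesis $t>0$, which is needed to divide by $t$.
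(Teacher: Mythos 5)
Your proposal is correct and follows essentially the same route as the paper: both proofs rest on the homogeneity $\exp_q(t;\alpha\lambda)=\exp_q(\alpha t;\lambda)$, differentiated to give $\dot\gamma_\lambda(t)=\tfrac{1}{t}\,d_\lambda\exp_q(t;\cdot)\,\lambda$ under the identification $T_\lambda(T_q^*M)\simeq T_q^*M$. The only difference is that you spell out the naturality of the interior product under pullback ($A^*(\iota_{Av_0}\eta)=\iota_{v_0}(A^*\eta)$), which the paper leaves implicit.
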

\begin{proof}
It follows from the homogeneity property $\exp_q(t;\alpha \lambda) = \exp_q(\alpha t; \lambda)$, for all $\alpha \in \R$:
\begin{equation}
\dot{\gamma}_\lambda(t) = \left.\frac{d}{d\tau}\right|_{\tau = 0} \exp_q(t+\tau;\lambda) = \frac{1}{t} \left.\frac{d}{d\tau}\right|_{\tau = 0} \exp_q(t;(1+\tau)\lambda) = \frac{1}{t} d_\lambda \exp_q(t;\cdot) \lambda,
\end{equation}
where we used the standard identification $T_\lambda(T_q^*M) = T_q^*M$. 
\end{proof}
The cylinder is $\cyl_0 = \{(p_x,p_z)\mid \|p_x\|^2 = 1\} \subset T_0^*M$ and $\lambda \simeq p_z \partial_{p_z} + p_x \partial_{p_x}$. The Lebesgue volume is $\mathscr{L}=dx \wedge dz$. By Lemma~\ref{l:switch} and reintroduction of the normalization factor, we obtain that the restriction to $\cyl_0$ of $\mu_0^t$ is
\begin{equation}\label{eq:misuraproof}
\mu_0^t = \frac{1}{N(t)}\sum_{i=1}^d | g_i(p_z t) | \|p_x^i\|^2 | d\Omega \wedge dp_z|,
\end{equation}
where $d\Omega$ is the normalized volume of $\mathbb{S}^{2d-1}$. Observe that each $|g_i| \in L^1(\R)$. Thus
\begin{equation}
N(t)  = \sum_{i=1}^d \int_{\mathbb{S}^{2d-1}} \|p_x^i\|^2 d\Omega\int_{\R} dp_z |g_i(p_z t)| =  \frac{1}{dt} \sum_{i=1}^d \int_{\R} dy |g_i(y)|.
\end{equation}
To compute $\mathbb{E}[\phi(\exp_q(\eps;\lambda))-\phi(q)]$, we can assume $\phi(q) = 0$. Hence
\begin{align}
\int_{\cyl_0}\phi(\exp_0(\eps;\lambda)) \mu_0^{c\eps}(\lambda)  & = \frac{1}{N(c\eps)}\sum_{i=1}^d \int_{\mathbb{S}^{2d-1}}  d\Omega \int_{\R} dp_z |g_i(p_z c \eps)|\|p_x^i\|^2 \phi(\exp_0(\eps;p_x,p_z)) \\
& = \frac{c}{\eps N(\eps)}\sum_{i=1}^d \int_{\mathbb{S}^{2d-1}} \|p_x^i\|^2 d\Omega \int_{\R} d y |g_i(cy)|\phi(\exp_0(\eps;p_x,y/\eps)) \\
& = \frac{c d}{\sum_{i=1}^d \int_{\R} |g_i(y)| dy} \sum_{i=1}^d \int_{\mathbb{S}^{2d-1}} \|p_x^i\|^2 d\Omega \int_{\R} dp_z |g_i(c p_z)|\phi(\exp_0(1;\eps p_x,p_z)), \label{eq:directcompt}
\end{align}
where we used the rescaling property of the exponential map. From~\eqref{eq:expcorank1} we get
\begin{equation}
\exp_0(1;\eps p_x,p_z) = \left(\begin{pmatrix}
B(\alpha_1 p_z)  & & \\
& \ddots & \\
 & & B(\alpha_d p_z) 
\end{pmatrix} \eps p_x, \sum_{i=1}^d b(\alpha_i p_z) \alpha_i \|p_x^i\|^2 \eps^2\right),
\end{equation}
where, with a slight abuse of notation
\begin{equation}
B(y)=\frac{\sin(y)}{y} \mathbb{I} + \frac{\cos(y)-1}{y} J, \qquad b(y) = \frac{y - \sin(y)}{2 y^2}.
\end{equation}
We observe here that
\begin{equation}\label{eq:observation}
B(y) B(y)^* = \frac{\sin(y/2)^2}{(y/2)^2} \mathbb{I}.
\end{equation}
It is convenient to rewrite
\begin{equation}
\exp_0(1;\eps p_x,p_z) = (\mathbf{B}(p_z) \eps p_x, \eps^2 p_x^* \mathbf{b}(p_z) p_x),
\end{equation}
where $\mathbf{B}(p_z)$ is a block-diagonal $2d\times 2d$ matrix, whose blocks are $B(\alpha_i p_z)$, and $\mathbf{b}$ is a $2d\times 2d$ diagonal matrix. Notice that $\exp_0(1;\eps p_x,p_z)$ is contained in the compact metric ball of radius $\eps$. Hence, we have
\begin{equation}\label{eq:directcomptexpansion}
\begin{aligned}
\phi(\exp_0(\eps;tp_x,p_z)) & = (\partial_x \phi)(\mathbf{B}(p_z) \eps p_x) + (\partial_z \phi)  p_x^* \mathbf{b}(p_z) p_x \eps^2 \\
& \quad + \frac{1}{2} \eps^2 (\mathbf{B}(p_z)p_x)^* (\partial^2_{x} \phi) (\mathbf{B}(p_z) p_x) + \eps^3 R_{(p_x,p_z)}(\eps).
\end{aligned}
\end{equation}
All derivatives are computed at $0$. Let $\eps \leq \eps_0$. A lengthy calculation using the explicit form of the remainder (and Hamilton's equations) shows that the remainder term is uniformly bounded (i.e. independently on $\eps$) by an order two polynomial in $p_z$, that is $R_{(p_x,p_z)}(\eps) \leq A + B p_z + Cp_z^2$, where the constants depend on the derivatives of $\phi$	(up to order three) on the compact ball of radius $\eps_0$ centered at the origin. Plugging~\eqref{eq:directcomptexpansion} back in~\eqref{eq:directcompt}, we observe that the term proportional to
\begin{equation}
\int_{\mathbb{S}^{2d-1}} \|p_x^i\|^2 d\Omega \int_{\R} dp_z |g_i(c p_z)| (\partial_x \phi) \mathbf{B}(p_z) \eps p_x
\end{equation}
vanishes, as the integral of any odd-degree monomial in $p_x$ on the sphere is zero. Furthermore, the term proportional to
\begin{equation}
\int_{\mathbb{S}^{2d-1}} \|p_x^i\|^2 d\Omega \int_{\R} dp_z |g_i(c p_z)| (\partial_z \phi)  p_x^* \mathbf{b}(p_z) p_x t^2
\end{equation}
vanishes, as the integrand is an odd function of $p_z$. The last second-order (in $\eps$) term is
\begin{equation}\label{eq:remainingterm}
\frac{cd}{\sum_{i=1}^d \int_{\R} |g_i(y)| dy} \sum_{i=1}^d \int_{\mathbb{S}^{2d-1}} \|p_x^i\|^2 d\Omega \int_{\R} dp_z |g_i(c p_z)| \frac{1}{2} \eps^2 (\mathbf{B}(p_z)p_x)^* (\partial^2_{x} \phi) (\mathbf{B}(p_z) p_x).
\end{equation}
If all the $\alpha_i$ are equal, then all $g_i = g$, and \eqref{eq:remainingterm} the sum $\sum_{i=1}^d \|p_x^i\|^2 = \|p_x\|^2 = 1$ simplifies. In this case we have a simple average of a quadratic form on $\mathbb{S}^{2d-1}$. When the $\alpha_i$ are distinct, we need the following results.
\begin{lemma}[see~\cite{Folland}]\label{l:intpolsphere}
Let $P(x) = x_1^{a_1} \ldots x_n^{a_n}$ a monomial in $\R^n$, with $a_i,\ldots,a_n \in \{0,1,2,\ldots\}$. Set $b_i:= \tfrac{1}{2}(a_i + 1)$ Then
\begin{equation}
\int_{\mathbb{S}^{n-1}} P(x) d\Omega = \frac{\Gamma(n/2)}{2 \pi^{n/2}} \begin{cases}
0 & \text{if some $a_j$ is odd}, \\
\frac{2 \Gamma(b_1)\Gamma(b_2)\cdots \Gamma(b_n)}{\Gamma(b_1+ b_2 + \cdots + b_n)} & \text{if all $a_j$ are even},
\end{cases}
\end{equation}
where $d\Omega$ is the normalized measure on the sphere $\mathbb{S}^{n-1} \subset \R^n$. 
\end{lemma}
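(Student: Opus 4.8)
The plan is to use the classical Gaussian-integral device: evaluate the auxiliary integral
\[
I := \int_{\R^n} P(x)\, e^{-|x|^2}\, dx, \qquad P(x) = x_1^{a_1}\cdots x_n^{a_n},
\]
in two different ways and compare.

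First I would compute $I$ in polar coordinates $x = r\theta$ with $r \geq 0$ and $\theta \in \mathbb{S}^{n-1}$. Since $P$ is homogeneous of degree $a := a_1 + \cdots + a_n$, one has $P(r\theta) = r^a P(\theta)$, and Fubini gives
\[
I = \left(\int_0^\infty r^{a+n-1} e^{-r^2}\, dr\right)\left(\int_{\mathbb{S}^{n-1}} P(\theta)\, d\sigma(\theta)\right),
\]
where $d\sigma$ is the unnormalized surface measure. The substitution $u = r^2$ turns the radial factor into $\tfrac12\int_0^\infty u^{(a+n)/2 - 1} e^{-u}\, du = \tfrac12\,\Gamma\!\left(\tfrac{a+n}{2}\right)$.

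Next I would compute $I$ by separating variables, $I = \prod_{i=1}^n \int_{-\infty}^\infty x_i^{a_i} e^{-x_i^2}\, dx_i$. If some $a_j$ is odd the corresponding one-dimensional factor vanishes by parity, so $I = 0$ and hence $\int_{\mathbb{S}^{n-1}} P\, d\sigma = 0$, which settles the first case. If every $a_j$ is even, the same substitution $u = x_i^2$ gives $\int_{-\infty}^\infty x_i^{a_i} e^{-x_i^2}\, dx_i = \Gamma\!\left(\tfrac{a_i+1}{2}\right) = \Gamma(b_i)$, so $I = \prod_{i=1}^n \Gamma(b_i)$. Equating the two expressions and using the identity $\tfrac{a+n}{2} = \sum_{i=1}^n \tfrac{a_i+1}{2} = b_1 + \cdots + b_n$ yields
\[
\int_{\mathbb{S}^{n-1}} P\, d\sigma = \frac{2\,\Gamma(b_1)\cdots\Gamma(b_n)}{\Gamma(b_1 + \cdots + b_n)}.
\]
Finally, dividing by the total surface area $|\mathbb{S}^{n-1}| = 2\pi^{n/2}/\Gamma(n/2)$ (which is the same device applied to $P \equiv 1$, i.e.\ all $a_i = 0$) converts $d\sigma$ into the normalized measure $d\Omega$ and produces exactly the stated formula.

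There is no genuine obstacle: the argument is entirely classical. The only points requiring care are the bookkeeping of the normalization constant — keeping clear whether $d\Omega$ denotes the normalized or unnormalized measure — and checking the identity $\sum_i b_i = (n + \sum_i a_i)/2$, which is precisely what makes the radial Gamma factor cancel against the denominator $\Gamma(b_1 + \cdots + b_n)$.
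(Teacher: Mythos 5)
Your argument is correct and complete: both evaluations of the Gaussian integral, the parity argument for the odd case, the identity $\sum_i b_i=(n+\sum_i a_i)/2$, and the final normalization by $|\mathbb{S}^{n-1}|=2\pi^{n/2}/\Gamma(n/2)$ all check out. The paper gives no proof of this lemma but cites Folland for it, and your Gaussian-integral device is precisely Folland's argument, so you have reproduced the intended proof.
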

\begin{lemma}\label{l:prodquad}
Let $Q(x) = x^* Q x$ and $R(x) = x^* R x$ be two quadratic forms on $\R^n$, such that $QR = RQ$. Then
\begin{equation}
\int_{\mathbb{S}^{n-1}} Q(x)R(x) d\Omega = \frac{2\tr(QR) + \tr(Q) \tr(R)}{n(n+2)}.
\end{equation}
If $R=\mathbb{I}$, we recover the usual formula $\int_{\mathbb{S}^{n-1}} Q d \Omega = \tfrac{1}{n}\tr(Q)$.
\end{lemma}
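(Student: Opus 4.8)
The plan is to reduce to the diagonal case by exploiting the rotational invariance of $d\Omega$, and then to evaluate the resulting monomial integrals using Lemma~\ref{l:intpolsphere}. First I would note that we may assume $Q$ and $R$ symmetric, since the quadratic forms $x \mapsto x^* Q x$ and $x \mapsto x^* R x$, as well as the traces $\tr Q$, $\tr R$, $\tr(QR)$, depend only on the symmetric parts. Two commuting real symmetric matrices admit a common orthonormal eigenbasis, so there is an orthogonal $O$ with $O^* Q O = \mathrm{diag}(q_1,\ldots,q_n)$ and $O^* R O = \mathrm{diag}(r_1,\ldots,r_n)$. Since $d\Omega$ is invariant under $x \mapsto Ox$, the integral is unchanged if we replace $Q,R$ by these diagonal matrices. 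Then $Q(x)R(x) = \sum_{i,j=1}^n q_i r_j\, x_i^2 x_j^2$, and the whole computation reduces to the values of $\int_{\mathbb{S}^{n-1}} x_i^2 x_j^2\, d\Omega$.

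Second, I would apply Lemma~\ref{l:intpolsphere} in the two cases $i=j$ and $i\neq j$. A short computation with the Gamma function (using $\Gamma((n+4)/2) = \tfrac{n(n+2)}{4}\Gamma(n/2)$, together with $\Gamma(1/2)=\sqrt{\pi}$, $\Gamma(3/2)=\sqrt{\pi}/2$, $\Gamma(5/2)=3\sqrt{\pi}/4$) yields $\int_{\mathbb{S}^{n-1}} x_i^4\, d\Omega = \tfrac{3}{n(n+2)}$ and $\int_{\mathbb{S}^{n-1}} x_i^2 x_j^2\, d\Omega = \tfrac{1}{n(n+2)}$ for $i\neq j$, which are uniformly summarized by $\int_{\mathbb{S}^{n-1}} x_i^2 x_j^2\, d\Omega = \tfrac{1+2\delta_{ij}}{n(n+2)}$.

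Third, plugging this back in,
\[
\int_{\mathbb{S}^{n-1}} Q(x)R(x)\, d\Omega = \frac{1}{n(n+2)}\left(\sum_{i,j} q_i r_j + 2\sum_i q_i r_i\right) = \frac{\tr(Q)\tr(R) + 2\tr(QR)}{n(n+2)},
\]
where in the simultaneously diagonal basis $\tr Q = \sum_i q_i$, $\tr R = \sum_j r_j$, and $\tr(QR) = \sum_i q_i r_i$; since traces are basis-independent, this is the claimed identity for the original matrices. Specializing $R = \mathbb{I}$ gives $\tr R = n$ and $\tr(QR) = \tr Q$, hence the right-hand side becomes $\tfrac{(n+2)\tr Q}{n(n+2)} = \tfrac{1}{n}\tr Q$, recovering the stated special case.

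The argument is essentially routine; the only step requiring a little care is the reduction to simultaneously diagonal $Q$ and $R$, which rests on the existence of a common orthonormal eigenbasis for commuting symmetric matrices and on the orthogonal invariance of the uniform measure on $\mathbb{S}^{n-1}$. Everything after that is bookkeeping with Lemma~\ref{l:intpolsphere} and the elementary identity $\sum_{i,j} q_i r_j(1+2\delta_{ij}) = \tr(Q)\tr(R) + 2\tr(QR)$.
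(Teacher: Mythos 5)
Your proof is correct and follows essentially the same route as the paper's: reduce to simultaneously diagonal $Q$ and $R$ by orthogonal invariance of $d\Omega$ (the paper states this reduction without elaborating on the common eigenbasis for commuting symmetric matrices), then evaluate $\int_{\mathbb{S}^{n-1}} x_i^2 x_j^2\, d\Omega$ via Lemma~\ref{l:intpolsphere} and sum. Your explicit Gamma-function bookkeeping and the check of the $R=\mathbb{I}$ special case are fine and match the stated constants.
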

\begin{proof}
Up to an orthogonal transformation, we can assume that $Q$ and $R$ are diagonal. Hence (for brevity we omit the domain of integration and the measure),
\begin{equation}
\int Q(x)R(x) = \sum_{i,j=1}^n Q_{ii} R_{jj} \int x_i^2 x_j^2.
\end{equation}
By Lemma~\ref{l:intpolsphere}, we have
\begin{equation}
\int x_i^2 x_j^2 = \begin{cases}
\frac{3}{n(n+2)} & i = j, \\
\frac{1}{n(n+2)} & i \neq j.
\end{cases}
\end{equation}
Thus
\begin{align}
\int Q(x)R(x) & = \sum_{i,j=1}^n Q_{ii} R_{jj} \int x_i^2 x_j^2 \left( \delta_{i j} + (1-\delta_{i j})\right) \\
& = \frac{1}{n(n+2)}\sum_{i,j}^n Q_{ii} R_{jj} (3\delta_{ij} + (1-\delta_{ij}))  = \frac{2\tr(QR) + \tr(Q) \tr(R)}{n(n+2)}.\qedhere
\end{align}
\end{proof}
We can write~\eqref{eq:remainingterm}, as the sum of integrals of products of quadratic forms over $\mathbb{S}^{2d-1}$
\begin{equation}\label{eq:remainingterm2}
\frac{1}{2} \eps^2\frac{cd}{\sum_{i=1}^d \int_{\R}| g_i(y)| dy} \sum_{i=1}^d \int_{\R} dp_z |g_i(c p_z)| \int_{\mathbb{S}^{2d-1}} Q_i(p_x) R(p_x) d\Omega,
\end{equation}
where the quadratic forms are (we omit the explicit dependence on $p_z$)
\begin{equation}
Q_i(p_x) := \|p_x^i\|^2, \qquad R(p_x) := (\mathbf{B}(p_z)p_x)^* (\partial^2_{x} \phi) (\mathbf{B}(p_z) p_x).
\end{equation}
A direct check shows that $Q$ and $R$ are commuting, block diagonal matrices. Thus, applying Lemma~\ref{l:prodquad} to~\eqref{eq:remainingterm2}, we obtain
\begin{multline}
\frac{1}{2} \eps^2\frac{cd}{\sum_{i=1}^d \int_{\R} |g_i(y)| dy} \sum_{i=1}^d \int_{\R} dp_z |g_i(c p_z)| \int_{\mathbb{S}^{2d-1}} Q_i(p_x) R(p_x) d\Omega =\\
= \frac{1}{2} \eps^2\frac{c d}{\sum_{i=1}^d \int_{\R} |g_i(y)| dy} \sum_{i=1}^d  \int_{\R} dp_z |g_i(c p_z)| \frac{2\tr(Q_i R) + \tr(Q_i)\tr(R)}{2d(2d+2)}. \label{eq:remainingterm3}
\end{multline}
Observe that $\tr(Q_i) = 2$, and $\sum_{\ell=1}^d Q_\ell = \mathbb{I}$. Therefore we rewrite~\eqref{eq:remainingterm3} as
\begin{equation}\label{eq:remainingterm4}
\eps^2\frac{c}{\sum_{i=1}^d \int_{\R} |g_i(y)| dy} \sum_{i,\ell=1}^d  \int_{\R} dp_z |g_i(c p_z)| \frac{(1+\delta_{i\ell})\tr(Q_\ell R)}{4(d+1)}.
\end{equation}
To compute $\tr(Q_\ell R)$ denote, for $\ell=1,\ldots,d$
\begin{equation}
D^2_\ell \phi:=\begin{pmatrix}
\partial^2_{x_{2\ell-1}}\phi & \partial_{x_{2\ell-1}}\partial_{x_{2\ell}}\phi  \\
\partial_{x_{2\ell}} \partial_{x_{2\ell-1}}\phi & \partial^2_{x_{2\ell}}\phi 
\end{pmatrix}, \qquad B_\ell:= B(\alpha_\ell p_z).
\end{equation}
We thus obtain
\begin{equation}
\tr(Q_\ell R ) =  \tr(B_\ell^* (D^2_\ell \phi) B_\ell) = \trace(B_\ell B_\ell^* (D^2_\ell \phi)) = \frac{\sin(\tfrac{\alpha_\ell p_z}{2})^2}{(\alpha_\ell p_z/2)^2} (\partial^2_{x_{2\ell-1}} \phi + \partial^2_{x_{2\ell}} \phi),
\end{equation}
where we used~\eqref{eq:observation}. Thus~\eqref{eq:remainingterm4} becomes
\begin{equation}
\frac{\eps^2}{4d} \sum_{i =1}^d \sigma_{i}(c)(\partial_{x_{2i-1}} \phi + \partial_{x_{2i}} \phi),
\end{equation}
where the constants $\sigma_{i}(c)$ are as in the statement of Theorem~\ref{t:convergence}.
Taking in account the remainder term as well, we obtain
\begin{equation}
\frac{4d}{\eps^2} \int_{\cyl_0}\phi(\exp_0(\eps;p_x,p_z)) \mu_0^{c\eps}(p_x,p_z)  = \sum_{i=1}^d \sigma_i(c) (\partial^2_{x_{2i-1}}\phi  + \partial^2_{x_{2i}}\phi )|_0 + 4d \eps O_0,
\end{equation}
where $|O_0| \leq M_0$ is a remainder term that, when $\eps \leq \eps_0$, is bounded by a constant that depends only on the derivatives of $\phi$ in a compact metric ball of radius $\eps_0$ centered in $0$. A straightforward left-invariance argument shows that, for any other $q \in M$
\begin{equation}
\frac{4d}{\eps^2} \int_{\cyl_q}[f(\exp_q(\eps;\lambda))-f(q)]\mu_q^{c \eps}(\lambda) = \sum_{i=1}^d \sigma_i(c) (X^2_{2i-1}\phi  + X^2_{2i}\phi)|_q + 4d \eps O_q(1),
\end{equation}
where $O_q\leq M_q$ is a remainder term bounded by a constant that depends only on the derivatives of $\phi$ in a compact metric ball of radius $\varepsilon_0$ centered in $q$. Thus
\begin{equation}
(L_{c,\mathscr{L}}\phi)|_{q} = \lim_{\eps\to 0} \frac{4d}{\eps^2} \int_{\cyl_q}[\phi(\exp_q(\eps;\lambda))-\phi(q)]\mu_q^{c \eps}(\lambda) = \sum_{i=1}^d \sigma_i(c) (X_{2i-1}\phi  + X_{2i}\phi )|_q,
\end{equation}
and the convergence is uniform on compact sets. This completes the proof for $\omega = \mathscr{L}$. 

Let, instead, $\omega = e^h \mathscr{L}$ for some $h \in C^\infty(M)$. This leads to an extra factor $e^{h(\exp_q(c \eps;\lambda))}$ in front of $\mu_q^{c\eps}(\lambda)$ (up to re-normalization). After a moment of reflection one realizes that
\begin{equation}
(L^\eps_{\omega,c} \phi)|_q = (L^\eps_{\mathscr{L},c} \tilde{\phi})|_q + \eps O_q, \qquad \text{ with } \qquad \tilde\phi = e^{c(h-h(q))}(\phi-\phi(q)).
\end{equation}
This observation yields the general statement, after noticing that
\begin{equation}
X_i^2(\tilde{\phi}) = X_i^2(\phi) + 2c X_i(h) X_i(\phi), \qquad \forall i =1,\ldots,2d,
\end{equation}
where everything is evaluated at the fixed point $q$. \hfill $\qed$

\subsection{Proof of Theorem \ref{t:limit-Riemannian-fake}}

We expand the function $\phi$ along the path $\gamma_u(\eps) = E_{q,\eps}(u)$:
\begin{equation}
\phi(E_{q,\eps}(u)) -\phi(q)= \eps X_u (\phi) + \frac{1}{2} \eps^2 X_u( X_u(\phi)) + O(\eps^3), 
\end{equation}
where everything on the r.h.s.\ is computed at $q$ (as a convention, in the following when the evaluation point is not explicitly displayed, we understand it as evaluation at $q$).
\begin{lemma}\label{l:pullbacknu}
For any one-form $\nu \in T_q^*M$ and any vector $v \in T_u\mathbb{S}^{n-1}$
\begin{equation}
(E^*_{q,\eps} \nu)|_u(v) = \eps \nu(X_v) + \frac{1}{2}\eps^2\nu([X_v,X_u]) + O(\eps^3).
\end{equation}
\end{lemma}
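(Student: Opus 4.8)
The plan is to reduce the claim to Taylor expansions in $\eps$ of the endpoint map $E_{q,\eps}$ and of its differential $d_u E_{q,\eps}$, carried out in a fixed coordinate chart around $q$. By definition $s\mapsto E_{q,s}(u)$ is the integral curve from $q$ of the time-independent vector field $X_u=\sum_{i=1}^k u_iX_i$, i.e.\ $E_{q,s}(u)=\Phi^{X_u}_s(q)$ with $\Phi^{X_u}$ its flow; differentiating $\partial_s\Phi^{X_u}_s=X_u\circ\Phi^{X_u}_s$ gives $E_{q,\eps}(u)=q+\eps X_u(q)+O(\eps^2)$ in coordinates. Setting $u(r)=u+rv$ and $W(s):=\partial_r|_{r=0}E_{q,s}(u(r))=d_uE_{q,s}(v)$, linearity of $u\mapsto X_u$ yields the variational (Jacobi-type) equation $\dot W(s)=X_v(\Phi^{X_u}_s(q))+(DX_u)(\Phi^{X_u}_s(q))\,W(s)$ with $W(0)=0$; solving by variation of constants and expanding, $W(\eps)=\eps X_v(q)+\tfrac12\eps^2[(DX_v)X_u+(DX_u)X_v](q)+O(\eps^3)$.

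Next I would substitute into the pullback. Fixing a local one-form extending $\nu$ --- in the application $\nu$ is one of the coframe elements dual to $X_1,\dots,X_k$ (or a constant combination of them), for which the function $\nu(X_v)$ is locally constant --- one has $(E^*_{q,\eps}\nu)|_u(v)=\nu|_{E_{q,\eps}(u)}(W(\eps))$. Expanding the coefficients of $\nu$ along $E_{q,\eps}(u)=q+\eps X_u(q)+O(\eps^2)$ produces, together with the symmetric term in $W(\eps)$, exactly the antisymmetric combination, leaving $\eps\,\nu(X_v)+\tfrac12\eps^2\nu([X_v,X_u])$ evaluated at $q$. The same conclusion is perhaps more transparent via the Duhamel formula for the derivative of a flow with respect to its generator, $d_uE_{q,\eps}(v)=\int_0^\eps(\Phi^{X_u}_{\eps-\tau})_*\big(X_v|_{\gamma_u(\tau)}\big)\,d\tau$, whence $(E^*_{q,\eps}\nu)|_u(v)=\int_0^\eps\big((\Phi^{X_u}_{\eps-\tau})^*\nu\big)_{\gamma_u(\tau)}(X_v)\,d\tau$; inserting $(\Phi^{X_u}_t)^*\nu=\nu+t\,\mathcal{L}_{X_u}\nu+O(t^2)$ and $\gamma_u(\tau)=q+O(\tau)$ and integrating, the $O(\eps)$ coefficient is $\nu(X_v)|_q$ and the $O(\eps^2)$ coefficient is $\tfrac12(\mathcal{L}_{X_u}\nu)(X_v)|_q$, which equals $\tfrac12\nu([X_v,X_u])|_q$ by the Cartan-type identity $(\mathcal{L}_{X_u}\nu)(X_v)=X_u(\nu(X_v))-\nu([X_u,X_v])$ together with $X_u(\nu(X_v))=0$.

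Finally I would verify that the discarded terms are $O(\eps^3)$ uniformly in $u\in\mathbb{S}^{n-1}$; since the $X_i$ are smooth and one works on a fixed compact coordinate neighbourhood of $q$, the relevant third derivatives are bounded there, so this is routine. I expect the only genuine obstacle to be the second step: at order $\eps^2$ the differential of the endpoint map delivers the \emph{symmetric} expression $(DX_u)X_v+(DX_v)X_u$, not the bracket, and the \emph{antisymmetric} Lie bracket only emerges after one accounts for the variation of $\nu$ along the flow line (equivalently, for the $\mathcal{L}_{X_u}\nu$ term and the fact that $\nu$ pairs to a locally constant function with $X_v$). Keeping this cancellation straight is the heart of the argument; everything else is bookkeeping.
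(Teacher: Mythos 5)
Your proposal is correct, and your second (Duhamel) variant is essentially the paper's own proof: the paper uses the same formula $d_uE_{q,\eps}(v)=e^{\eps X_u}_*\int_0^\eps e^{-\tau X_u}_*X_v\,d\tau$ for the differential of the endpoint map and extracts the first two $\eps$-derivatives via the Lie derivative, which is your integral computation in differentiated form. Your explicit observation that the antisymmetric bracket only survives because $X_u(\nu(X_v))=0$ (true in the application, where $\nu$ is an element of the coframe dual to $X_1,\ldots,X_k$) is exactly right, and is left implicit in the paper's step $2(\mathcal{L}_{X_u}\nu)(X_v)+\nu(\mathcal{L}_{X_u}X_v)=\nu([X_v,X_u])$.
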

\begin{proof}[Proof of Lemma~\ref{l:pullbacknu}]
Since $u$ is constant, the differential of the endpoint map is
\begin{equation}
d_u E_{q,\eps}(v) = e^{\eps X_u}_* \int_0^\eps e^{-\tau X_u}_* X_v d\tau, \qquad v \in \R^n,
\end{equation}
where $e^{\eps Y}$ is the flow of the field $Y$ (see \cite{nostrolibro}). By definition of the Lie derivative $\mathcal{L}$ we get
\begin{equation}
\begin{aligned}
\frac{d}{d\eps}(E^*_{q,\eps} \nu)|_u(v) & =  \frac{d}{d\eps} (e^{\eps X_u *}\nu)|_q\left(\int_0^\eps e^{-\tau X_u}_* X_v d\tau\right) \\
& = (e^{\eps X_u *}\mathcal{L}_{X_u} \nu)|_q\left(\int_0^\eps e^{-\tau X_u}_* X_v d\tau\right) +  (e^{\eps X_u *}\nu)|_q\left(e^{-\eps X_u}_* X_v \right).
\end{aligned}
\end{equation}
Taking another derivative, and evaluating at $t=0$, we get
\begin{align}
\left.\frac{d^2}{d\eps^2}\right|_{\eps=0}(E^*_{q,\eps} \nu)|_u(v)  & = 2(\mathcal{L}_{X_u} \nu)|_q(X_v) +  \nu|_q(\mathcal{L}_{X_u}(X_v))  = \nu([X_v,X_u]),\\
\left.\frac{d}{d\eps}\right|_{\eps=0}(E^*_{q,\eps} \nu)|_u(v)  & = \nu|_q(X_v). \qedhere
 \end{align}
\end{proof}
\begin{lemma}\label{l:expmeasureriem}
We have the following Taylor expansion for the measure
\begin{equation}
\mu_q^\eps(u) = \left(1+\frac{\eps}{2} \dive_{\mathcal{R}}(X_u) + \eps  X_u(h) + O(\eps^2)\right)d\Omega(u),
\end{equation}
where $d\Omega$ is the normalized Euclidean measure on $\mathbb{S}^{n-1}$.
\end{lemma}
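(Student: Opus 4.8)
The plan is to pull the contracted form $\iota_{\dot\gamma_u(\eps)}\omega$ back through $E_{q,\eps}$ using the coframe dual to the orthonormal frame, so that the first-order expansion supplied by Lemma~\ref{l:pullbacknu} can be applied one factor at a time, and then to recognise the coefficient $\tfrac12\dive_{\mathcal{R}}(X_u)$ via the divergence identity $\dive_{\mathcal{R}}(X_u)=\sum_i\nu^i([X_i,X_u])$.

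First I would fix the coframe $\nu^1,\dots,\nu^n$ dual to $X_1,\dots,X_n$, so that $\mathcal{R}=\nu^1\wedge\dots\wedge\nu^n$ and $\omega=e^h\,\nu^1\wedge\dots\wedge\nu^n$. Since $\dot\gamma_u(\eps)=X_u$ at the endpoint $E_{q,\eps}(u)$ and $\nu^j(X_u)\equiv u_j$, contracting and pulling back gives
\[
(E_{q,\eps}^*\iota_{\dot\gamma_u(\eps)}\omega)|_u=e^{h(E_{q,\eps}(u))}\sum_{j}(-1)^{j-1}u_j\bigwedge_{i\neq j}\bigl(E_{q,\eps}^*\nu^i\bigr)|_u .
\]
Then I would expand each factor with Lemma~\ref{l:pullbacknu}; since $\nu^i(X_\xi)=\xi_i$ is constant along the frame directions, this reads $(E_{q,\eps}^*\nu^i)|_u=\eps\,(dw^i+\tfrac\eps2\gamma^i)|_{T_u\mathbb{S}^{n-1}}+O(\eps^3)$, with $\gamma^i(\xi):=\nu^i([X_\xi,X_u])(q)$. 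Extending $\gamma^i$ to a constant one-form on $\R^n$ and writing $\tilde\theta^i:=dw^i+\tfrac\eps2\gamma^i$, one has $\tilde\theta^1\wedge\dots\wedge\tilde\theta^n=\bigl(1+\tfrac\eps2\sum_i\gamma^i(\partial_{w_i})+O(\eps^2)\bigr)\,dw^1\wedge\dots\wedge dw^n$, and $\sum_i\gamma^i(\partial_{w_i})=\sum_i\nu^i([X_i,X_u])(q)=\dive_{\mathcal{R}}(X_u)(q)$ by the identity above (itself a one-line consequence of $\mathcal{L}_{X_u}\mathcal{R}=\dive_{\mathcal{R}}(X_u)\,\mathcal{R}$ and $\mathcal{L}_{X_u}\nu^i=-\sum_j\nu^i([X_u,X_j])\,\nu^j$). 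Contracting with the unit vector $u$, restricting to $T_u\mathbb{S}^{n-1}$, and using $\iota_u(dw^1\wedge\dots\wedge dw^n)|_{T_u\mathbb{S}^{n-1}}=\mathrm{vol}(\mathbb{S}^{n-1})\,d\Omega(u)$, the only spurious contribution is $\tfrac\eps2\sum_j(-1)^{j-1}\gamma^j(u)\bigwedge_{i\neq j}dw^i$, which contracts to $\tfrac\eps2\bigl(\sum_j u_j\,\nu^j([X_u,X_u])(q)\bigr)d\Omega(u)=0$ because $[X_u,X_u]=0$. Hence the unnormalised density equals $e^{h(E_{q,\eps}(u))}\eps^{n-1}\mathrm{vol}(\mathbb{S}^{n-1})\bigl(1+\tfrac\eps2\dive_{\mathcal{R}}(X_u)(q)+O(\eps^2)\bigr)d\Omega(u)$.

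To finish I would use that $E_{q,\eps}(u)$ is the time-$\eps$ flow of $X_u$ from $q$, so $h(E_{q,\eps}(u))=h(q)+\eps\,X_u(h)(q)+O(\eps^2)$; taking absolute values is harmless for small $\eps$, and when dividing by the normalisation $N(q,\eps)=\int_{\mathbb{S}^{n-1}}|(E_{q,\eps}^*\iota_{\dot\gamma_u(\eps)}\omega)|$ I would note that $X_u(h)(q)$ and $\dive_{\mathcal{R}}(X_u)(q)$ are linear, hence odd, in $u$, so they integrate to zero over the sphere and $N(q,\eps)=\eps^{n-1}e^{h(q)}\mathrm{vol}(\mathbb{S}^{n-1})(1+O(\eps^2))$. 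Dividing gives $\mu_q^\eps(u)=\bigl(1+\tfrac\eps2\dive_{\mathcal{R}}(X_u)+\eps\,X_u(h)+O(\eps^2)\bigr)d\Omega(u)$, with the remainder uniform for $q$ in compact sets by smoothness of $E$, $h$ and the frame.

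The hard part will be the wedge-product bookkeeping in the second step: arranging that precisely $\tfrac12\dive_{\mathcal{R}}(X_u)$, and nothing else, survives — this relies on the constant-extension trick together with the cancellation $[X_u,X_u]=0$ — and keeping the powers of $\eps$ straight, since the $O(\eps)$ correction to the probability measure comes from an $O(\eps^{n})$ term in the unnormalised $(n-1)$-form. The sphere integrals and the change-of-volume identities are routine.
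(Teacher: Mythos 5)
Your proposal is correct and follows essentially the same route as the paper's proof: expand the pulled-back coframe via Lemma~\ref{l:pullbacknu}, identify the trace $\sum_i\nu^i([X_i,X_u])$ with $\dive_{\mathcal{R}}(X_u)$, expand $e^{h}$ along the flow line, and kill the first-order terms in the normalization by odd symmetry on the sphere. The only difference is that you track the contracted $(n-1)$-form and the cancellation from $[X_u,X_u]=0$ explicitly, where the paper passes directly to the determinant $D_q(\eps)$ of the full $n\times n$ pullback matrix; this is a more careful rendering of the same computation.
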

\begin{proof}[Proof of Lemma~\ref{l:expmeasureriem}]
Let $\nu_1,\ldots,\nu_n$ be the dual frame to $X_1,\ldots,X_n$, that is $\nu_i(X_j) =\delta_{ij}$. Since $\omega = e^h \mathcal{R} = e^h \nu_1\wedge\ldots\wedge \nu_n$, we obtain (ignoring normalization factors)
\begin{align}\label{eq:measurefakeriem}
\mu_q^\eps(u) \propto D_q(\eps) e^{h(\gamma_u(\eps))}  d\Omega(u), \qquad u \in \mathbb{S}^{n-1},
\end{align}
where $D_q(\eps)$ is the determinant of the matrix $(E_{q,\eps}^* \nu_i)(e_j)$, for $i,j=1,\ldots,n$. Using Lemma~\ref{l:pullbacknu}, since $X_{e_j} = X_j$, we obtain
\begin{align}
(E_{q,\eps}^* \nu_i)(e_j) = \eps \nu_i(X_j) + \frac{\eps^2}{2}\nu_i([X_j,X_u]) + O(\eps^3),
\end{align}
where everything is computed at $q$. Since $\det(\mathbb{I} + \eps M) = 1+\eps \tr(M) +O(\eps^2)$ for any matrix $M$, we get
\begin{equation}
D_q(\eps)  = \eps^{n}\left(1+ \frac{\eps}{2}\sum_{i=1}^n \nu_i([X_i, X_u]) + O(\eps^2)\right) =  \eps^{n}\left(1+ \frac{\eps}{2} \dive_{\mathcal{R}}(X_u) + O(\eps^2)\right).
\end{equation}
Plugging this in~\eqref{eq:measurefakeriem}, and expanding the function $e^{h(\gamma_u(\eps))}$, we get 
\begin{align}
\mu_q^\eps & \propto \eps^{n}\left(1+\frac{\eps}{2} \dive_{\mathcal{R}}(X_u)+O(\eps^2)\right)e^{h(q)}\left(1+\eps X_u(h) + O(\eps^2)\right) d\Omega(u)  \\
& \propto \eps^{n} e^{h(q)} \left(1+\frac{\eps}{2} \dive_{\mathcal{R}}(X_u)+t X_u(h) + O(\eps^2)\right) d\Omega(u).
\end{align}
Taking in account the normalization (recall that $\int_{\mathbb{S}^{n-1}} X_u  = 0$), we obtain  the result.
\end{proof}
We are ready to compute the expectation value
\begin{multline}
\int_{\mathbb{S}^{n-1}}[\phi(E_{q,\eps}(u))-\phi(q)] \mu_q^{c\eps} = 
\int_{\mathbb{S}^{n-1}}\left[\eps X_u (\phi) + \frac{1}{2} \eps^2 X_u( X_u(\phi)) +O(\eps^3)\right] \times \\
\times \left[1+\frac{c\eps}{2} \dive_{\mathcal{R}}(X_u)+c\eps X_u(h) + O(\eps^2)\right]d\Omega(u).
\end{multline}
Since $\int_{\mathbb{S}^{n-1}} X_u =0$ and $\int_{\mathbb{S}^{n-1}} Q_{ij}u_i u_j = \tr (Q)/n$, we get
\begin{align*}
(L_{\omega,c} \phi)(q) & = \lim_{\eps \to 0^+}\frac{2n}{ \eps^2} \left(\frac{c\eps^2}{2n}\dive_\mathcal{R}(X_i)X_i(\phi) + \frac{c\eps^2}{n} X_i(\phi)X_i(h) + \frac{\eps^2}{2n} X_i^2(\phi) + O(\eps^3)\right)  \\
& = \sum_{i=1}^n X_i^2(\phi) + c \dive_{\mathcal{R}}(X_i) X_i(\phi)+ 2c X_i(\phi)X_i(h).
\end{align*}
We obtain the different forms of the statements using the change of volume formula $\dive_{\omega}(X_i) = \dive_{e^h \mathcal{R}}(X_i) = \dive_\mathcal{R}(X_i) + X_i(h)$. The convergence is uniform on compact sets since the domain of integration $\mathbb{S}^{n-1}$ is compact and all objects are smooth. \hfill $\qed$

\subsection{Proof of Theorem \ref{t:limit-contact-fake}}

The proof follows the same lines as that of Theorem~\ref{t:limit-Riemannian-fake}. The expansion of the function $\phi$ along the path $\gamma_u(\eps) = E_{q,\eps}(u)$ remains unchanged:
\begin{equation}
\phi(E_{q,\eps}(u)) -\phi(q)= \eps X_u (\phi) + \frac{1}{2} \eps^2 X_u( X_u(\phi)) + O(\eps^3). 
\end{equation}
where, this time $X_u = \sum_{i=1}^k u_i X_i$. Also Lemma~\ref{l:pullbacknu} remains unchanged, replacing $n$ with $k$. The following contact version of Lemma~\ref{l:expmeasureriem} also holds. 
\begin{lemma}
We have the following Taylor expansion for the measure
\begin{equation}
\mu_q^\eps(u) = \left(1+\frac{\eps}{2} \dive_{\mathcal{P}}(X_u) + \eps  X_u(h) + O(\eps^2)\right)d\Omega(u),
\end{equation}
where $d\Omega$ is the normalized Euclidean measure on $\mathbb{S}^{k-1}$.
\end{lemma}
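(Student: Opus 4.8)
The plan is to reduce the statement to the computation already performed in the proof of Lemma~\ref{l:expmeasureriem}, treating the contraction with the Reeb field as a preliminary normalization. Fix a local orthonormal frame $X_1,\ldots,X_{2d}$ of $\distr$, adjoin $X_0$ to get a local frame $X_0,X_1,\ldots,X_{2d}$ of $TM$, and let $\nu_0,\nu_1,\ldots,\nu_{2d}$ be the dual coframe (so $\nu_0=\eta$ and $\nu_a(X_b)=\delta_{ab}$). By the characterization of the Popp density in the contact case, $\mathcal{P}(X_0,X_1,\ldots,X_{2d})=1$, hence $\mathcal{P}=\nu_0\wedge\nu_1\wedge\ldots\wedge\nu_{2d}$, $\omega=e^h\mathcal{P}$, and $\iota_{X_0}\omega=e^h\,\nu_1\wedge\ldots\wedge\nu_{2d}$. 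Next, since $\gamma_u$ is the integral curve of $X_u=\sum_{i=1}^{2d}u_iX_i$ and $E_{q,\eps}(u)=E_{q,1}(\eps u)$ by reparametrization of the flow, one has $\dot\gamma_u(\eps)=\tfrac1\eps\,d_uE_{q,\eps}(u)$; therefore, for any basis $v_1,\ldots,v_{2d-1}$ of $T_u\mathbb{S}^{2d-1}$,
\[
\bigl(E_{q,\eps}^*\,\iota_{X_0}\iota_{\dot\gamma_u(\eps)}\omega\bigr)_u(v_1,\ldots,v_{2d-1})
=\tfrac1\eps\,\bigl(E_{q,\eps}^*\,\iota_{X_0}\omega\bigr)_u(u,v_1,\ldots,v_{2d-1}),
\]
and the $2d$-form $\iota_{X_0}\omega$ being pulled back no longer carries the parameter $u$.

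From here the argument transcribes the proof of Lemma~\ref{l:expmeasureriem}, with $n$ replaced by $k=2d$ and the Riemannian volume replaced by the horizontal coframe volume $\nu_1\wedge\ldots\wedge\nu_{2d}$. Evaluating $E_{q,\eps}^*(\iota_{X_0}\omega)$ on the standard basis of $\R^{2d}$ gives $e^{h(E_{q,\eps}(u))}$ times the determinant of $\bigl[(E_{q,\eps}^*\nu_a)(e_j)\bigr]_{a,j=1,\ldots,2d}$, and the contact version of Lemma~\ref{l:pullbacknu} (identical statement, with $k=2d$ in place of $n$) gives $(E_{q,\eps}^*\nu_a)(e_j)=\eps\delta_{aj}+\tfrac{\eps^2}{2}\nu_a([X_j,X_u])+O(\eps^3)$, so that this determinant equals $\eps^{2d}\bigl(1+\tfrac\eps2\sum_{a=1}^{2d}\nu_a([X_a,X_u])+O(\eps^2)\bigr)$. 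Combining with $e^{h(E_{q,\eps}(u))}=e^{h(q)}\bigl(1+\eps X_u(h)+O(\eps^2)\bigr)$ (differentiating $h$ along the flow line of $X_u$), passing from the density on the $v_j$ to the ratio with $d\Omega(u)$ (the change-of-basis factor cancels against $d\Omega$, and the powers of $\eps$ are constant in $u$), and normalizing via the fact that $u\mapsto\sum_a\nu_a([X_a,X_u])$ and $u\mapsto X_u(h)$ are linear, hence odd, so have zero mean on $\mathbb{S}^{2d-1}$, one obtains $\mu_q^\eps(u)=\bigl(1+\tfrac\eps2\sum_{a=1}^{2d}\nu_a([X_a,X_u])+\eps X_u(h)+O(\eps^2)\bigr)d\Omega(u)$.

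It remains to identify $\sum_{a=1}^{2d}\nu_a([X_a,X_u])$ with $\dive_{\mathcal{P}}(X_u)$; this is the only genuinely new point and the place where the contact hypotheses are used, and it is the step I expect to require the most care. I would prove it by expanding $\mathcal{L}_{X_u}\mathcal{P}=\mathcal{L}_{X_u}\bigl(\nu_0\wedge\nu_1\wedge\ldots\wedge\nu_{2d}\bigr)$ with the Leibniz rule: the term carrying $\mathcal{L}_{X_u}\nu_0=\iota_{X_u}d\eta$ (using $\eta(X_u)=0$) wedges to zero because $\iota_{X_u}d\eta$ annihilates $X_0$ (using $\iota_{X_0}d\eta=0$), hence lies in $\spn\{\nu_1,\ldots,\nu_{2d}\}$; and in every remaining term the $\nu_0$-component of $\mathcal{L}_{X_u}\nu_a$ dies against the leading factor $\nu_0$. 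What survives is $\bigl(\sum_{a=1}^{2d}(\mathcal{L}_{X_u}\nu_a)(X_a)\bigr)\mathcal{P}=\bigl(\sum_{a=1}^{2d}\nu_a([X_a,X_u])\bigr)\mathcal{P}$, which is the desired identity. With the Lemma in hand, Theorem~\ref{t:limit-contact-fake} follows exactly as Theorem~\ref{t:limit-Riemannian-fake} does from its Riemannian counterpart: insert this expansion and that of $\phi(E_{q,\eps}(u))-\phi(q)$ into $L_{\omega,c}^\eps$, integrate over $\mathbb{S}^{k-1}$ using $\int_{\mathbb{S}^{k-1}}X_u=0$ and $\int_{\mathbb{S}^{k-1}}Q_{ij}u_iu_j=\tr(Q)/k$, and simplify with $\dive_\omega(X_i)=\dive_{\mathcal{P}}(X_i)+X_i(h)$.
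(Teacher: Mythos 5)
Your proposal is correct and follows essentially the same route as the paper: reduce to the Riemannian computation of Lemma~\ref{l:expmeasureriem} after contracting with the Reeb field (so that $\iota_{X_0}\omega=e^h\nu_1\wedge\ldots\wedge\nu_k$), extract the determinant expansion via Lemma~\ref{l:pullbacknu}, and identify $\sum_{a=1}^{k}\nu_a([X_a,X_u])$ with $\dive_{\mathcal{P}}(X_u)$ using the contact/Reeb conditions. Your Leibniz-rule argument for that last identification is just a repackaging of the paper's observation that the missing $a=0$ term vanishes because $\eta([X_0,X_j])=-d\eta(X_0,X_j)=0$, so the two proofs rest on the same facts.
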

\begin{proof}[Proof of the Lemma]
 Since $\omega = e^{h} \mathcal{P} = e^h \nu_0 \wedge \nu_1\wedge \ldots \nu_k$, we have $i_{X_0} \omega = e^h \nu_1\wedge \ldots \wedge \nu_k$. Hence the proof is similar to proof of Lemma~\ref{l:expmeasureriem}, with $n$ replaced by $k$. In fact, up to normalization
\begin{equation}
\mu_q^\eps(u)  \propto (E_{\eps,q}^* \circ \iota_{\dot\gamma_u(\eps),X_0} \omega) = D_q(\eps) e^{h(\gamma_u(\eps))} d\Omega(u), \qquad u \in \mathbb{S}^{k-1},
\end{equation}
where $D_q(\eps)$ is the determinant of the matrix $(E_{q,\eps}^* \nu_i)(X_j)$, for $i,j=1,\ldots,k$. This is a $k\times k$ matrix. With a computation analogous to the one in the proof of Lemma~\ref{l:expmeasureriem}, we obtain $D_q(\eps) = \eps^k(1+\eps\tr (M) +O(\eps^2))$, with
\begin{equation}
\tr (M) = \frac{1}{2} \sum_{i=1}^{k} \nu_i([X_i, X_u])  = \frac{1}{2} \sum_{i,j=1}^{k} u_j  c_{ij}^i  = \frac{1}{2} \sum_{j=1}^k u_j \sum_{i=0}^{k} c_{ij}^i  = \frac{1}{2} \dive_{\mathcal{P}}(X_u),
\end{equation}
where we have been able to complete the sum, including the index $0$ since, in the contact case, $c_{0j}^0 = \eta([X_0,X_j]) = -d\eta(X_0,X_j) = 0$ for all $j=1,\ldots,k$. From here, we conclude the proof as in that of Lemma~\ref{l:expmeasureriem}.
\end{proof}
The computation of the limit operator is analogous to the one in the proof of Theorem~\ref{t:limit-Riemannian-fake}, replacing the Riemannian volume $\mathcal{R}$ with the Popp one $\mathcal{P}$. \hfill $\qed$

\appendix

\section{Volume sampling as Girsanov-type change-of-measure}\label{a:Girsanov}

In both the geodesic and flow random walks defined in Sections~\ref{s:ito-intro} and~\ref{s:strato-intro}, the probability measure used to select the vector $V=\sum\beta_iV_i$ was the uniform probability measure on the unit sphere with respect to the covariance structure of the $w^i_t$ (which gives an inner product on the vector space of such $V$). In the volume sampling scheme we have introduced for the geodesic random walk with respect to an orthonormal frame on a Riemannian manifold (that is, the volume sampling scheme for the isotropic random walk that approximates Brownian motion), the probability measure on the sphere is replaced by a different probability measure, absolutely continuous with respect to the uniform one. In terms of the random walk, the volume-sampled walk is supported on the same set of paths as the original walk, but with a different probability measure, absolutely continuous with respect to the original. In the scaling limit as $\eps\rightarrow 0$, this change in measure produces a drift in the limiting diffusion, and we recognize this as a Girsanov-type phenomenon. We now take a moment to explore this interpretation in a bit more detail.

The standard finite-dimensional model for Girsanov's theorem, as given at the beginning of \cite[Section 3.5]{KaratzasShreve}, is as follows. With slightly loose notation, we let $N(0,\mathbb{I}_n)$ denote the centered (multivariate) normal distribution on $\bR^n$ with covariance structure given by the identity matrix (that is, the $n$ Euclidean coordinates are i.i.d.\ normals with expectation 0 and variance 1). Let $Z$ be a random variable (on some probability space with probability denoted $P$) with distribution $N(0,\mathbb{I}_n)$, and let $v\in\bR^n$. We have a new probability measure $\tilde{P}$, absolutely continuous with respect to $P$, given by
\[
\tilde{P}(d\lambda) = e^{\ip{v}{Z(\lambda)}-\frac{1}{2}\ip{v}{v}} P(d\lambda) ,
\]
where $\ip{\cdot}{\cdot}$ is the standard inner product on $\bR^n$.
Then the random variable $Z-v$ has distribution $N(0,\mathbb{I}_n)$ under $\tilde{P}$. So adjusting the measure in this way compensates for the translation, which equivalently means that one can create a translation by adjusting the measure. The infinite-dimensional version of this (for Brownian motion on Euclidean space) is Girsanov's theorem.

Next, we rephrase this. Another way of determining $\tilde{P}$ is to say that it comes from adjusting the ``likelihood ratios'' for $P$ by
\begin{equation}\label{Eqn:Girsanov}
\frac{\tilde{P}(d\lambda_2)}{\tilde{P}(d\lambda_1)} = e^{\ip{v}{Z(\lambda_2)}-\ip{v}{Z(\lambda_1)}}
\frac{P(d\lambda_2)}{P(d\lambda_1)} .
\end{equation}
This accounts for the $ e^{\ip{v}{Z(\lambda)}}$ in the Radon-Nikodym derivative above, which is the important part; the $e^{-\frac{1}{2}\ip{v}{v}}$ is just the normalizing constant making $\tilde{P}$ a probability measure.

For the isotropic random walk, we have that $P$ is $\mu_q^0$, the uniform probability measure on the sphere of radius $\sqrt{n}$ in $T_qM$, with respect to the Riemannian inner product. (Here we choose to normalize the sphere to include the $\sqrt{n}$ factor in order to make the connection to Girsanov's theorem clearer.) Of course, $\mu_q^0$ is not a multivariate normal on $T_qM \simeq \bR^n$. However, $\mu_q^0$ has expectation $0$ and covariance matrix $\mathbb{I}_n$, so that $\mu_q^0$ has the same first two moments as  $N(0,\mathbb{I}_n)$. In light of Donsker's invariance principle, it is not surprising that ``getting the first two moments right'' is enough. Now $\mu_q^{c\eps}$ is absolutely continuous with respect to $\mu_q^0$, and, as we have seen in the proof of Theorem \ref{t:limit-Riemannian}, the relationship is given by 
\[\begin{split}
\frac{\mu_q^{c\eps}\lp d\lambda_2\rp}{\mu_q^{c\eps}\lp d\lambda_1\rp} &= \frac{\frac{1}{\mathrm{vol}(\mathbb{S}^{n-1})}\lp 1+c\eps
\ip{\grad(h)}{\lambda_2}+O\lp\eps^2\rp \rp}{\frac{1}{\mathrm{vol}(\mathbb{S}^{n-1})}\lp 1+c\eps
\ip{\grad(h)}{\lambda_1}+O\lp\eps^2\rp \rp} \cdot \frac{\mu_q^{0}\lp d\lambda_2\rp}{\mu_q^{0}\lp d\lambda_1\rp} \\
&= e^{c\eps\lp \ip{\grad(h)}{\lambda_2} - \ip{\grad(h)}{\lambda_1}\rp+O\lp \eps^2\rp}
 \cdot \frac{\mu_q^{0}\lp d\lambda_2\rp}{\mu_q^{0}\lp d\lambda_1\rp} .
\end{split}\]
Note that, as we have developed things, the random variable that has distribution $\mu_q^{0}$, which is analogous to $Z$ above, is implicitly just the identity on the sphere. (Also, $\mu_q^{c\eps}$ is a probability measure by construction, so there's no need for a normalizing factor, partially explaining our focus on the likelihood ratio.)

Comparing this to \eqref{Eqn:Girsanov}, we see that the role of $v$ is played by the quantity $c\eps\grad(h)+O(\eps^2)$. To take into account the parabolic scaling limit (and, at this stage, also to take into account the analysts' normalization), note that this non-centered measure on the sphere of radius $\sqrt{n}$ (namely $\mu_q^{c\eps}$) is mapped onto geodesics of length $\eps$, and that this step takes place in time $\dt = 2n/\eps^2$, so that the difference quotient (expected spatial displacement over elapsed time) is $2c\grad(h)+O(\eps)$. Thus, in the limit as $\eps\rightarrow 0$, we expect an infinitesimal translation given by the tangent vector $2c\grad(h)$, which is exactly what we see in Theorem \ref{t:limit-Riemannian} (appearing as a first-order differential operator). Namely, the random walk corresponding to $\mu_q^{0}$ has infinitesimal generator $\Delta_{\mathcal{R}}$ in the limit, while the random walk corresponding to $\mu_q^{c\eps}$ has infinitesimal generator $\Delta_{\mathcal{R}}+2c\grad(h)$ in the limit. So this volume sampling gives a natural random walk version of the Girsanov change of measure.

\subsection*{Acknowledgments}

This research has been partially supported by the European Research Council, ERC StG 2009 ``GeCoMethods'', contract n.\ 239748, by the ERC POC project ARTIV1 contract number 727283, by the ANR project ``SRGI'' ANR-15-CE40-0018, by a public grant as part of the Investissement d'avenir project, reference ANR-11-LABX-0056-LMH, LabEx LMH (in a joint call with Programme Gaspard Monge en Optimisation et Recherche Op\'erationnelle), by the iCODE institute, research project of the Idex Paris-Saclay, and by the SMAI project ``BOUM''. Project sponsored by the National Security Agency under Grant Number H98230-15-1-0171. The United States Government is authorized to reproduce and distribute reprints notwithstanding any copyright notation herein.

\medskip

The authors wish to thank J.-M. Bismuth for helpful discussions.

\bibliographystyle{abbrv}
\bibliography{volumesampling}

\end{document}